\theoremstyle{plain}
\newtheorem{thm}{Theorem}
  \theoremstyle{definition}
  \newtheorem*{thm*}{Theorem}
  \newtheorem{defn}{Definition}
  \theoremstyle{remark}
  \newtheorem{rem}[thm]{Remark}
  \theoremstyle{plain}
  \newtheorem{prop}[thm]{Proposition}
  \theoremstyle{plain}
  \newtheorem{lem}[thm]{Lemma}
  \theoremstyle{plain}
 \theoremstyle{definition}
  \theoremstyle{remark}
  \newtheorem*{rem*}{Remark}
  \theoremstyle{definition}
\newtheorem*{question*}{\it{QUESTION}}
\theoremstyle{plain}
\newcommand{\N}{\mathbb{N}}
\newcommand{\R}{{\mathbb{R}}}
\newcommand{\C}{{\mathbb{C}}}
\newcommand{\Z}{{\mathbb{Z}}}
\newcommand{\dd}{{\rm d}}
\renewcommand{\gg}{{\mathfrak{g}}}
\renewcommand{\Re}{\mathop\mathrm{Re}\nolimits}
\newcommand{\grad}{\mathop\nabla\nolimits}
\renewcommand{\div}{\mathop\mathrm{div}\nolimits}
\DeclareMathOperator{\shift}{S}
\renewcommand{\SS}{{\shift}}
\DeclareMathOperator{\midop}{M}
\newcommand{\M}{{\midop}}
\DeclareMathOperator{\identity}{I}
\newcommand{\I}{{\identity}}
\DeclareRobustCommand{\bigO}{
  \text{\usefont{OMS}{cmsy}{m}{n}O}%
}
\begin{document}

\title[]{Optimal discrete Hardy--Rellich--Birman inequalities}

\author{Franti\v sek \v Stampach}
\address[Franti{\v s}ek {\v S}tampach]{
	Department of Mathematics, Faculty of Nuclear Sciences and Physical Engineering, Czech Technical University in Prague, Trojanova~13, 12000 Praha~2, Czech Republic
	}	
\email{stampfra@cvut.cz}

\author{Jakub Waclawek}
\address[Jakub Waclawek]{
	Department of Mathematics, Faculty of Nuclear Sciences and Physical Engineering, Czech Technical University in Prague, Trojanova~13, 12000 Praha~2, Czech Republic
	}	
\email{waclajak@cvut.cz}

\subjclass[2020]{26D15, 47B39, 39A12}

\keywords{discrete Hardy inequality, Rellich inequality, Birman inequality, optimal inequality, discrete Laplacian}

\date{\today}

\begin{abstract}
We prove sufficient conditions on a parameter sequence to determine optimal weights in inequalities for an integer power $\ell$ of the discrete Laplacian on the half-line. By a concrete choice of the parameter sequence, we obtain explicit optimal discrete Rellich ($\ell=2$) and Birman ($\ell\geq3$) weights. For $\ell=1$, we rediscover the optimal Hardy weight of Keller--Pinchover--Pogorzelski. For $\ell=2$, we improve upon the best known Rellich weights due to Gerhat--Krej{\v c}i{\v r}{\' i}k--{\v S}tampach and Huang--Ye. For $\ell\geq3$, our main result proves a conjecture by Gerhat--Krej{\v c}i{\v r}{\' i}k--{\v S}tampach and improves the discrete analogue of the classical Birman weight due to Huang--Ye to the optimal.
\end{abstract}

\maketitle

\section{Introduction}

A century passed since G.~H.~Hardy discovered his famous inequality
\begin{equation}
\sum_{n=1}^{\infty}|u_{n}-u_{n-1}|^{2}\geq\frac{1}{4}\sum_{n=1}^{\infty}\frac{|u_{n}|^{2}}{n^{2}},
\label{eq:hardy_class}
\end{equation}
which is true for any $u\in\ell^{2}(\N_{0})$ with $u_{0}=0$. In fact, many great mathematicians such as E. Landau, G. Pólya, I. Schur, and M. Riesz contributed to the early stage developments of Hardy inequalities; see~\cite{kuf-mal-per_06, per-sam_24} and references therein for historical account. Since then a tremendous number of variants and generalizations of the Hardy inequality has been studied and found applications in various areas such as probability, geometry, PDEs, spectral theory, or mathematical physics. The American Mathematical Society MathSciNet database involves 1317 articles, 8 books, and 7 theses with "Hardy" and "inequalities" in the title on April 25, 2024. Most of these works focus on Hardy inequalities in a continuous setting; the continuous analogue of~\eqref{eq:hardy_class} reads
\begin{equation}
 \int_{0}^{\infty}\left|u'(x)\right|^{2}\dd x\geq\frac{1}{4}\int_{0}^{\infty}\frac{|u(x)|^{2}}{x^{2}}\,\dd x
\label{eq:hardy_class_cont}
\end{equation}
and holds true for any function $u$ from the Sobolev space $H^{1}(0,\infty)$ with $u(0)=0$ (the form domain of the Dirichlet Laplacian in $L^{2}(0,\infty)$). The two inequalities can be shown to be equivalent, i.e. one can deduce~\eqref{eq:hardy_class_cont} from~\eqref{eq:hardy_class} and vice versa.

Approximately a century after the appearance of~\eqref{eq:hardy_class}, M.~Keller, Y.~Pinchover, and F.~Pogorzelski made an interesting observation that, although the constant $1/4$ in~\eqref{eq:hardy_class} is the best possible, the weight sequence $1/(4n^{2})$ can be still improved. These authors found in~\cite{kel-pin-pog_18a,kel-pin-pog_18b} the improved Hardy weight
\[
 \rho_{n}^{\rm{KPP}}:= 2-\sqrt{1-\frac{1}{n}}-\sqrt{1+\frac{1}{n}}>\frac{1}{4n^{2}},
\]
i.e. inequality~\eqref{eq:hardy_class} is still true when the weight $1/(4n^{2})$ on the right-hand side is replaced by $\rho_{n}^{\rm{KPP}}$; see also~\cite{kre-sta_22} for a simple proof and~\cite{fis-kel-pog} for an $\ell^{p}$-generalization. Moreover, the weight $\rho^{\rm{KPP}}$ was shown to be optimal in~\cite{kel-pin-pog_18b}, see also~\cite{ger-kre-sta_23}. The notion of \emph{optimality} is a rather strong property which was introduced in discrete setting of graphs in~\cite{kel-pin-pog_18b} adapted from earlier work~\cite{dev-fra-pin_14} on Hardy inequalities for PDEs (see Definition~\ref{def:optim} below). In particular, the optimality of $\rho^{\rm{KPP}}$ implies that the discrete Hardy inequality does not hold with any point-wise greater weight sequence, hence $\rho^{\rm{KKP}}$ cannot be improved any further in this sense. This fact is interesting since it contrasts with the continuous setting~\eqref{eq:hardy_class_cont}, where it is well known that the Hardy weight $1/(4x^{2})$ is critical, meaning that, if~\eqref{eq:hardy_class_cont} holds with $1/(4x^{2})$ replaced by a measurable function $\rho(x)\geq1/(4x^{2})$ for a.e.~$x>0$, then $\rho(x)=1/(4x^{2})$ for a.e.~$x>0$.

Hardy classical inequalities~\eqref{eq:hardy_class} or~\eqref{eq:hardy_class_cont} can be interpreted in the sense of quadratic forms in $\ell^{2}(\N_{0})$ or $L^{2}(0,\infty)$ as lower bounds for the discrete or continuous Dirichlet Laplacian on the half-line, $-\Delta\geq\rho$, where $\rho$ stands for the operator of multiplication by either the discrete or the continuous Hardy weight. In this article, we use the following definition of the \emph{discrete Laplacian},
\begin{equation}
 (\Delta u)_{n}:=u_{n-1}-2u_{n}+u_{n+1}
\label{eq:def_disc_lapl}
\end{equation}
acting on the space of complex sequences $u$ indexed by $\Z$, whose domain will be restricted further below. This definition of $\Delta$ differs by a sign from the definition of the combinatorial Laplacian used by other authors~\cite{huang-ye_24, kel-pin-pog_18b}. With our sign convention, $-\Delta$ is a nonnegative operator on spaces of square summable sequences and the Hardy inequality takes the form $-\Delta\geq\rho$ on respective spaces in both the discrete as well as the continuous setting.

Lower bounds for the second and higher integer powers of the (continuous) Dirichlet Laplacian on the half-line were studied by F.~Rellich and M.~{\v S}.~Birman. Rellich's inequality is the lower bound for the bi-Laplacian whose one-dimensional form reads
\[
 \int_{0}^{\infty}\left|u''(x)\right|^2\dd x\geq\frac{9}{16}\int_{0}^{\infty}\frac{|u(x)|^{2}}{x^{4}}\dd x,
\]
where $u\in H^{2}(0,\infty)$ with $u(0)=u'(0)=0$. Rellich's inequality was published posthumously in~\cite{rel_56}. Birman~\cite{bir_61} generalized the Hardy and Rellich inequalities by considering derivatives of order $\ell\in\N$ and obtained inequality
\begin{equation}
 \int_{0}^{\infty}|u^{(\ell)}(x)|^2\dd x\geq \frac{\left((2\ell)!\right)^{2}}{16^{\ell}\left(\ell!\right)^{2}} \int_{0}^{\infty}\frac{|u(x)|^{2}}{x^{2\ell}}\dd x
\label{eq:birman_ineq_cont}
\end{equation}
for $u\in H^{\ell}(0,\infty)$ satisfying $u(0)=\dots=u^{(\ell-1)}(0)=0$, with the best possible constant; see also Glazman's book~\cite[pp.~83--84]{gla_65} for a detailed proof and articles~\cite{ges-lit-mic_18, owe_99} for other proofs. Refinements, weighted variants and other generalizations of~\eqref{eq:birman_ineq_cont} appeared only recently~\cite{ges-etal_22a, ges-etal_22b, ges-etal_22c}.

\subsection{State of the art}

Being aware of the fact that the classical Hardy inequality~\eqref{eq:hardy_class} admits an improvement, it is reasonable to expect the same for the discrete analogue of the Rellich and Birman inequalities
\begin{equation}
\sum_{n=\lceil\ell/2\rceil}^{\infty}\left|(-\Delta)^{\ell/2}u_{n}\right|^{2}\geq\frac{\left((2\ell)!\right)^{2}}{16^{\ell}\left(\ell!\right)^{2}} \sum_{n=\ell}^{\infty}\frac{|u_{n}|^{2}}{n^{2\ell}}
\label{eq:birman_ineq_disc}
\end{equation}
for $\ell\geq2$, where $u\in\ell^{2}(\N_{0})$ with $u_{0}=\dots=u_{\ell-1}=0$, $\lceil x\rceil$ is the lowest integer greater or equal to a real number $x$, and 
\[
 (-\Delta)^{\ell/2}:= \begin{cases} (-\Delta)^{m} &\quad\mbox{ if } \ell=2m, \\ 
									\grad\circ(-\Delta)^{m} &\quad\mbox{ if } \ell=2m+1.
 \end{cases} 
\]
Here we adopt the notation for the \emph{discrete gradient} and \emph{divergence} acting on sequences $u$ indexed by $\Z$ by formulas
\begin{equation}
 (\grad u)_{n}:=u_{n}-u_{n-1} \quad\mbox{ and }\quad (\div u)_{n}:=u_{n+1}-u_{n}
\label{eq:def_grad_div}
\end{equation}
for all $n\in\Z$. By~\eqref{eq:def_disc_lapl}, we have the familiar equality for the Laplacian $\Delta=\div\circ\grad$ on the space of complex sequences indexed by $\Z$. We omit $\circ$ when composing difference operators as well as the brackets writing $\grad u_{n}$, $\div u_{n}$, $\Delta u_{n}$, etc. to simplify the notation below. The left-hand side of~\eqref{eq:birman_ineq_disc} coincides with the quadratic form of $(-\Delta)^{\ell}$, i.e. with $\langle u, (-\Delta)^{\ell}u\rangle$, where $\langle\cdot,\cdot\rangle$ is the Euclidean inner product in $\ell^{2}(\N_{0})$. Inequality~\eqref{eq:birman_ineq_disc} was proven in~\cite{ger-kre-sta_23} for $\ell=2$ and in~\cite{huang-ye_24} for $\ell\geq3$. The constant on the right-hand side in~\eqref{eq:birman_ineq_disc} is the best possible, which was shown in~\cite{huang-ye_24}, too.

First steps towards an improvement of~\eqref{eq:birman_ineq_disc} in the Rellich case $\ell=2$ were done in~\cite{ger-kre-sta_23} by proving that 
\[
\sum_{n=1}^{\infty}\left|\Delta u_{n}\right|^{2}\geq
\sum_{n=2}^{\infty}\rho_{n}^{\rm{GKS}}|u_{n}|^{2},
\]
with
\[
 \rho_{n}^{\rm{GKS}}:=\frac{\Delta^{2} n^{3/2}}{n^{3/2}}=6-4\left(1-\frac{1}{n}\right)^{\!3/2}-4\left(1-\frac{1}{n}\right)^{\!3/2}+\left(1-\frac{2}{n}\right)^{\!3/2}+\left(1+\frac{2}{n}\right)^{\!3/2}>\frac{9}{16n^{4}}
\]
for $n\geq2$. Moreover, the authors of~\cite{ger-kre-sta_23} conjectured that the discrete Birman inequality~\eqref{eq:birman_ineq_disc}, for all $\ell\geq3$, can be improved to 
\begin{equation}
\sum_{n=\lceil\ell/2\rceil}^{\infty}\left|(-\Delta)^{\ell/2}u_{n}\right|^{2}\geq\sum_{n=\ell}^{\infty}\frac{(-\Delta)^{\ell} n^{\ell-1/2}}{n^{\ell-1/2}}|u_{n}|^{2},
\label{eq:birman_ineq_improved_conj}
\end{equation}
and showed that this form would really improve upon~\eqref{eq:birman_ineq_disc}. This is true, indeed, nevertheless neither this form is optimal, which follows from Theorem~\ref{thm:5} below.

Shortly after~\cite{ger-kre-sta_23}, yet another discrete Rellich weight of a quite complicated form
\[
 \rho_{n}^{\rm{HY}}:=\frac{1}{4}\left(A_{n}+\sum_{k=2}^{\infty}\frac{(2k+1)^{2}r_{2k}^{(1)}}{n^{2k+2}}\right),
\]
with 
\[
 A_{n}:=\frac{1}{n^{2}}\left[1+\left(1-\frac{1}{n}\right)^{\!-2}-\left(1-\frac{1}{n}\right)^{\!-1/2}-\left(1+\frac{1}{n}\right)^{\!3/2} \right]
\]
and $r_{2k}^{(1)}$ the positive coefficients defined by~\eqref{eq:r_k^1} below, has been found in~\cite{huang-ye_24}. The Rellich weight $\rho^{\rm{HY}}$ improves upon $\rho^{\rm{GKS}}$ at least asymptotically for large index as one sees from the comparison of the second terms in their asymptotic expansions  
\begin{align}
\rho_{n}^{\rm{YH}}&=\frac{9}{16n^{4}}+\frac{15}{16n^{5}}+\frac{213}{128n^{6}}+\bigO\left(\frac{1}{n^{7}}\right),\label{eq:rho_HY_asympt} \\
\rho_{n}^{\rm{GKS}}&=\frac{9}{16n^{4}}+\frac{105}{128n^{6}}+\frac{6237}{4096n^{8}}+\bigO\left(\frac{1}{n^{10}}\right) \nonumber
\end{align}
for $n\to\infty$. Yet an optimal weight even for the discrete Rellich inequality has remained unknown until now. The aim of the present article is to construct optimal discrete Rellich and general Birman weights and so establish an optimal improvement of~\eqref{eq:birman_ineq_disc} for general power $\ell\geq2$.

Further discussion on other recent and closely related results on lower bounds for powers of the discrete Laplacian is postponed to Subsection~\ref{subsec:matr_form} below.

\subsection{Organization of the paper}

In Subsection~\ref{subsec:main}, we formulate our main results as Theorems~\ref{thm:1}--\ref{thm:5} that are proved in Subsections~\ref{subsec:thm1_proof}--\ref{subsec:thm5_proof}, respectively. Connections to Toeplitz matrices and other works on powers of the discrete Laplacian are discussed in Subsection~\ref{subsec:matr_form}. In the final Section~\ref{sec:gen_weights}, we give a few secondary results on more general parameter families of Hardy--Rellich--Birman weights addressing their non-uniqueness and optimality. The paper is concluded by an appendix where two auxiliary statements are proven.

\subsection{Main results}\label{subsec:main}

We formulate our main results as five theorems whose proofs are gradually worked out in Section~\ref{sec:proof} below. Theorems~\ref{thm:1}--\ref{thm:3} give sufficient conditions that, when imposed to a \emph{parameter sequence} $\mathfrak{g}$, give rise to an optimal discrete Hardy--Rellich--Birman weight. By using the fraktur font for the sequence $\gg$, we want to emphasize the distinguished role of $\gg$ as the only parameter on which the constructed weights depend. With an explicit choice of $\mathfrak{g}=\mathfrak{g}^{(\ell)}$ depending on an integer~$\ell$ -- the power of $\Delta$ -- we demonstrate in Theorem~\ref{thm:4} that Theorems~\ref{thm:1}--\ref{thm:3} apply to~$\mathfrak{g}^{(\ell)}$ and analyze the resulting optimal discrete Hardy--Rellich--Birman weights in greater detail in Theorem~\ref{thm:5}.

It turns out to be advantageous to work with complex sequences indexed by $\Z$ with zero entries up to a certain positive index. For this reason, we introduce the following subspaces of the space of all complex sequences,
\[
 H^{\ell}:=\{u:\Z\to\C \mid u_{n}=0 \mbox{ for all } n<\ell\}, \quad 
 \mathcal{H}^{\ell}:= H^{\ell} \cap \ell^{2}(\Z),
\]
and
\[
 \mathcal{H}_{0}^{\ell}:=\{u\in H^{\ell} \mid u \mbox{ compactly supported}\}.
\]
Clearly, the subspace $\mathcal{H}^{\ell}$ can be naturally identified with $\ell^{2}(\N)$ for any $\ell\in\N$.
Recall the definition of \emph{optimality} adopted from~\cite{kel-pin-pog_18b} in a slightly modified form.

\begin{defn}\label{def:optim}
Let $\ell\in\N$. A nonnegative sequence $\{\rho_{n}\}_{n=\ell}^{\infty}$ is said to be a \emph{discrete Hardy} ($\ell=1$), \emph{Rellich} ($\ell=2$), or \emph{Birman} ($\ell\geq3$) \emph{weight} if and only if we have the inequality 
\begin{equation}
\sum_{n=\lceil\ell/2\rceil}^{\infty}\left|(-\Delta)^{\ell/2}u_{n}\right|^{2}\geq\sum_{n=\ell}^{\infty}\rho_{n}|u_{n}|^{2}
\label{eq:hrb_ineq_disc}
\end{equation}
for all $u\in\mathcal{H}_{0}^{\ell}$. In addition, the weight $\rho$ is said to be \emph{optimal} if the following three properties hold:
\begin{enumerate}[i)]
\item (\emph{Criticality}) If~\eqref{eq:hrb_ineq_disc} holds with $\rho$ replaced by another weight~$\tilde{\rho}$ such that $\tilde{\rho}_{n}\geq\rho_{n}$ for all $n\geq\ell$, then $\rho_{n}=\tilde{\rho}_{n}$ for all $n\geq\ell$.
\item (\emph{Non-attainability}) If~\eqref{eq:hrb_ineq_disc} holds as equality for a sequence $u\in H^{\ell}$ such that $\sqrt{\rho}u$ is square summable (i.e. the right-hand side of~\eqref{eq:hrb_ineq_disc} is finite), then $u\equiv0$.
\item (\emph{Optimality near infinity}) For every $M\geq\ell$ and $\varepsilon>0$, there exists $u\in\mathcal{H}_{0}^{M}$ such that
\begin{equation}
\sum_{n=\lceil\ell/2\rceil}^{\infty}\left|(-\Delta)^{\ell/2}u_{n}\right|^{2}<(1+\varepsilon)\sum_{n=\ell}^{\infty}\rho_{n}|u_{n}|^{2}.
\label{eq:def_opt_near_inf}
\end{equation}
\end{enumerate}
\end{defn}

The criticality means that $\rho$ in~\eqref{eq:hrb_ineq_disc} cannot be replaced by any point-wise greater weight. The non-attainability implies that for all non-trivial $u$ from the $\rho$-weighted $\ell^{2}$-space, inequality~\eqref{eq:hrb_ineq_disc} is strict.
Non-attainability together with criticality is called \emph{null-criticality} in~\cite{kel-pin-pog_18b}. Lastly, optimality near infinity implies that the constant $1$ appearing by the weight $\rho$ on the right-hand side of~\eqref{eq:hrb_ineq_disc} cannot be replaced by anything greater even if the space of sequences $u$ is restricted to compactly supported sequences that vanishes at an arbitrary finite number of first indices.

Inequality~\eqref{eq:hrb_ineq_disc} can be equivalently formulated in the sense of quadratic forms on $\mathcal{H}^{\ell}_{0}$ as $(-\Delta)^{\ell}\geq\rho$, where we again identify $\rho$ with the corresponding multiplication operator. In fact, inequality $(-\Delta)^{\ell}\geq\rho$ extends to $\mathcal{H}^{\ell}$ since $(-\Delta)^{\ell}$ is a bounded operator on $\mathcal{H}^{\ell}$. Actually~\eqref{eq:hrb_ineq_disc} can be shown to hold for any $u\in H^{\ell}$ if $\rho$ is positive, as one can verify by adapting arguments of Lemma~\ref{lem:extend} below.

Our method relies on an \emph{iterative use} of an identity for the quadratic form of the discrete Laplacian with an additional weight $V$ that implies a corresponding Hardy inequality and also identifies the \emph{remainder} in the inequality. This is an initial step for our deduction formulated below as Theorem~\ref{thm:0}. The key idea for the construction of optimal discrete Hardy--Rellich--Birman weights is in a convenient choice of the weight $V$ in terms of $\mathfrak{g}$ in each step of the iteration.

\setcounter{thm}{-1}
\begin{thm}\label{thm:0}
Let $V:\Z\to\C$ and $\gg\in H^{1}$ be such that $\gg_{n}>0$ for all $n\geq1$. Then for all $u\in\mathcal{H}^{1}_{0}$, we have the identity
\begin{equation}
		\sum_{n=1}^{\infty} V_{n} |\grad u_{n}|^{2} + \sum_{n=1}^{\infty} \frac{\div(V\grad \gg)_{n}}{\gg_n}|u_{n}|^{2} = \sum_{n=1}^{\infty} V_{n+1} \left|\sqrt{\frac{\gg_{n}}{\gg_{n+1}}}u_{n+1}-\sqrt{\frac{\gg_{n+1}}{\gg_{n}}}u_{n}\right|^2 .
\label{eq:hardy_id_init}
\end{equation}
\end{thm}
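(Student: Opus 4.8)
The plan is to verify the identity~\eqref{eq:hardy_id_init} by a direct ``summation by parts'' computation, starting from the right-hand side and working towards the left. First I would expand the square on the right-hand side using $\gg_n>0$ (so all the square roots are well-defined for $n\geq1$, and vanish for $n\leq0$ since $\gg\in H^1$), obtaining
\[
 V_{n+1}\left|\sqrt{\tfrac{\gg_n}{\gg_{n+1}}}u_{n+1}-\sqrt{\tfrac{\gg_{n+1}}{\gg_n}}u_n\right|^2
 = V_{n+1}\frac{\gg_n}{\gg_{n+1}}|u_{n+1}|^2 + V_{n+1}\frac{\gg_{n+1}}{\gg_n}|u_n|^2 - 2V_{n+1}\Re\left(u_{n+1}\overline{u_n}\right),
\]
where the cross term simplifies because $\sqrt{\gg_n/\gg_{n+1}}\cdot\sqrt{\gg_{n+1}/\gg_n}=1$. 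Summing over $n\geq1$ and reindexing the first sum, I would collect the coefficient of $|u_n|^2$ into the single expression $V_n\frac{\gg_{n-1}}{\gg_n}+V_{n+1}\frac{\gg_{n+1}}{\gg_n}$ for each $n\geq1$ (the boundary term at $n=1$ being harmless because $u_0=0$ for $u\in\mathcal H^1_0$, and also $\gg_0=0$).

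Next I would do the analogous expansion of the left-hand side: by~\eqref{eq:def_grad_div}, $|\grad u_n|^2=|u_n|^2+|u_{n-1}|^2-2\Re(u_n\overline{u_{n-1}})$, so $\sum_{n\geq1}V_n|\grad u_n|^2$ contributes a $|u_n|^2$-coefficient $V_n+V_{n+1}$ and the same cross term $-2V_{n+1}\Re(u_{n+1}\overline{u_n})$ as the right-hand side (after reindexing), so the cross terms cancel between the two sides and the whole identity reduces to matching the $|u_n|^2$-coefficients. Concretely, the identity will follow once I check, for each $n\geq1$, that
\[
 V_n+V_{n+1} + \frac{\div(V\grad\gg)_n}{\gg_n} \;=\; V_n\frac{\gg_{n-1}}{\gg_n} + V_{n+1}\frac{\gg_{n+1}}{\gg_n},
\]
which after multiplying by $\gg_n$ becomes $\div(V\grad\gg)_n = V_{n+1}(\gg_{n+1}-\gg_n) - V_n(\gg_n-\gg_{n-1}) = V_{n+1}\grad\gg_{n+1}-V_n\grad\gg_n$; and this last equality is exactly the definition of $\div$ applied to the sequence $(V\grad\gg)$, namely $\div(V\grad\gg)_n = (V\grad\gg)_{n+1}-(V\grad\gg)_n$. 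So the coefficient identity is a tautology once everything is written out.

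The only genuine care needed — and the closest thing to an obstacle — is the bookkeeping of the reindexing and the boundary terms: one must confirm that shifting $\sum_{n\geq1}(\cdots)_{n+1}$ to $\sum_{n\geq2}(\cdots)_n$ and then absorbing the missing $n=1$ term is legitimate. This is fine because $u\in\mathcal H^1_0$ is compactly supported (so all sums are finite, no convergence issues) and because $u_n=0$ and $\gg_n=0$ for $n\leq0$, which kills precisely the terms that would otherwise be left over at the lower endpoint. I would also note that, although $V$ is allowed to be complex, the displayed identity is genuinely an identity of complex numbers (the potentially complex quantities $V_n|\grad u_n|^2$, $V_{n+1}\bigl|\cdots\bigr|^2$, and $\div(V\grad\gg)_n/\gg_n$ all appear linearly), so no reality argument is required — the computation is purely algebraic.
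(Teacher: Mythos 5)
Your proof is correct and takes essentially the same elementary route as the paper's: expand the square on the right, reindex, and compare with the expanded left-hand side, using $u_0=\gg_0=0$ to dispose of the $n=1$ boundary terms. The only difference is one of bookkeeping — the paper reorganizes each summand pointwise around $|\grad u_n|^2$ before summing, whereas you sum first and then match the $|u_n|^2$-coefficients and cross terms separately — but the underlying computation is identical.
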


In a form related to the optimal Hardy inequality, identity~\eqref{eq:hardy_id_init} appeared in~\cite{kre-sta_22}, its non-weighted form, i.e. with $V\equiv 1$, was given in~\cite{kre-lap-sta_jlms22}, and proved in full generality in~\cite{huang-ye_24}. For the reader's convenience, we prove Theorem~\ref{thm:0} in Subsection~\ref{subsec:thm0_proof}.

Our first main result is a similar identity for the quadratic form of $(-\Delta)^{\ell}$ with arbitrary $\ell\geq1$, which singles out a term with the Hardy--Rellich--Birman weight and identifies the remainder explicitly in terms of $\gg$. For the iterative application of Theorem~\ref{thm:0}, it is necessary to impose certain positivity assumptions on the parameter sequence $\gg$, see \eqref{eq:assum_A1} below. In fact, in Theorems~\ref{thm:1}--\ref{thm:3}, the assumptions imposed on~$\gg$ will be gradually strengthened. These necessary conditions on~$\gg$ deserve to be emphasized and therefore a special numbering is used for them.  By convention, difference operators to the power $0$ such as $\div^{0}$ and $(-\Delta)^{0}$ are to be understood as the identity operator.

\begin{thm}\label{thm:1}
Let $\ell\in\N$. Suppose
\begin{equation}
\gg\in H^{\ell} \mbox{ with } \div^{k}\gg_{n}>0 \mbox{ for all } n\geq\ell-k \mbox{ and } 0\leq k<\ell.
\tag{\bf{A1}}
\label{eq:assum_A1}
\end{equation}
Then for all $u\in\mathcal{H}_{0}^{\ell}$, we have the identity
\begin{equation}
\sum_{n=\lceil\ell/2\rceil}^{\infty}\left|(-\Delta)^{\ell/2}u_{n}\right|^{2}=\sum_{n=\ell}^{\infty}\frac{(-\Delta)^{\ell}\gg_{n}}{\gg_{n}}\,|u_{n}|^{2}+\sum_{k=0}^{\ell-1}\mathcal{R}_{k}^{(\ell)}(\gg;u),
\label{eq:hrb_id_init}
\end{equation}
where 
\begin{align}
\mathcal{R}_{k}^{(\ell)}(\gg;u)&:=\sum_{n=\ell-k}^{\infty}\frac{(-\Delta)^{\ell-1-k}\div^{k+1}\gg_{n}}{\div^{k+1}\gg_{n}}\left|\sqrt{\frac{\div^{k}\gg_{n}}{\div^{k}\gg_{n+1}}}\,\div^{k}u_{n+1}-\sqrt{\frac{\div^{k}\gg_{n+1}}{\div^{k}\gg_{n}}}\,\div^{k}u_{n}\right|^{2}\!. \nonumber\\
\label{eq:def_remainder_R}
\end{align}
(For $k=\ell-1$, the coefficient in front of the absolute value in~\eqref{eq:def_remainder_R} is to be interpreted as $1$.)
\end{thm}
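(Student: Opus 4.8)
The plan is to establish~\eqref{eq:hrb_id_init} by induction on $\ell$, applying Theorem~\ref{thm:0} once at each stage. For the base case $\ell=1$ one has $(-\Delta)^{1/2}=\grad$, and in~\eqref{eq:def_remainder_R} the single remainder $\mathcal{R}_{0}^{(1)}(\gg;u)$ has coefficient $1$ and equals $\sum_{n\ge1}\bigl|\sqrt{\gg_{n}/\gg_{n+1}}\,u_{n+1}-\sqrt{\gg_{n+1}/\gg_{n}}\,u_{n}\bigr|^{2}$; hence~\eqref{eq:hrb_id_init} is exactly Theorem~\ref{thm:0} with $V\equiv1$ after moving the middle sum to the right-hand side and using $\div\grad\gg=\Delta\gg=-(-\Delta)\gg$.

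For the inductive step, assume the statement for $\ell-1$ and fix $\gg$ obeying~\eqref{eq:assum_A1} and $u\in\mathcal{H}_{0}^{\ell}$. I would first use a \emph{peeling identity}: the difference operators commute and $u$ has compact support, so summation by parts ($\grad^{*}=-\div$) gives $\sum_{n}\bigl|(-\Delta)^{\ell/2}u_{n}\bigr|^{2}=\sum_{n}\bigl|(-\Delta)^{(\ell-1)/2}\div u_{n}\bigr|^{2}$. Here $\div u\in\mathcal{H}_{0}^{\ell-1}$, and $\div\gg\in H^{\ell-1}$ satisfies $\div^{j}(\div\gg)_{n}=\div^{j+1}\gg_{n}>0$ for $n\ge(\ell-1)-j$, $0\le j<\ell-1$, i.e.\ $\div\gg$ obeys~\eqref{eq:assum_A1} at level $\ell-1$. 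Applying the induction hypothesis to $(\div\gg,\div u)$ and substituting $\div^{k}(\div\gg)=\div^{k+1}\gg$ and $\div^{k}(\div u)=\div^{k+1}u$ into~\eqref{eq:def_remainder_R}, the shift $j=k+1$ identifies $\mathcal{R}^{(\ell-1)}_{k}(\div\gg;\div u)$ with $\mathcal{R}^{(\ell)}_{j}(\gg;u)$, so that the induction hypothesis reads
\[
\sum_{n=\lceil\ell/2\rceil}^{\infty}\bigl|(-\Delta)^{\ell/2}u_{n}\bigr|^{2}=\sum_{n=\ell-1}^{\infty}\frac{(-\Delta)^{\ell-1}\div\gg_{n}}{\div\gg_{n}}\,|\div u_{n}|^{2}+\sum_{k=1}^{\ell-1}\mathcal{R}_{k}^{(\ell)}(\gg;u).
\]

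It then remains to rewrite the first sum on the right as $\sum_{n\ge\ell}\frac{(-\Delta)^{\ell}\gg_{n}}{\gg_{n}}|u_{n}|^{2}+\mathcal{R}_{0}^{(\ell)}(\gg;u)$, which I would do with one further application of Theorem~\ref{thm:0}. Since $\gg_{n}=0$ for $n<\ell$, first translate Theorem~\ref{thm:0} by $\ell-1$; it then reads, for arbitrary $V$ and $u\in\mathcal{H}_{0}^{\ell}$, $\sum_{n\ge\ell}V_{n}|\grad u_{n}|^{2}+\sum_{n\ge\ell}\frac{\div(V\grad\gg)_{n}}{\gg_{n}}|u_{n}|^{2}=\sum_{n\ge\ell}V_{n+1}\bigl|\sqrt{\gg_{n}/\gg_{n+1}}\,u_{n+1}-\sqrt{\gg_{n+1}/\gg_{n}}\,u_{n}\bigr|^{2}$, where only the values $V_{n}$ with $n\ge\ell$ occur. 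Choosing $V_{n}:=(-\Delta)^{\ell-1}\div\gg_{n-1}/\div\gg_{n-1}$, which is well defined for $n\ge\ell$ by~\eqref{eq:assum_A1}, the elementary identity $\grad\gg_{n}=\div\gg_{n-1}$ gives $(V\grad\gg)_{n}=(-\Delta)^{\ell-1}\div\gg_{n-1}$, and therefore $\div(V\grad\gg)_{n}=\grad\bigl((-\Delta)^{\ell-1}\div\gg\bigr)_{n}=\bigl((-\Delta)^{\ell-1}\Delta\gg\bigr)_{n}=-(-\Delta)^{\ell}\gg_{n}$; moreover $V_{n+1}=(-\Delta)^{\ell-1}\div\gg_{n}/\div\gg_{n}$ is precisely the coefficient occurring in $\mathcal{R}_{0}^{(\ell)}(\gg;u)$, while $|\grad u_{n}|^{2}=|\div u_{n-1}|^{2}$ re-indexes the left-hand sum. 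Rearranging yields
\[
\sum_{n=\ell-1}^{\infty}\frac{(-\Delta)^{\ell-1}\div\gg_{n}}{\div\gg_{n}}\,|\div u_{n}|^{2}=\sum_{n=\ell}^{\infty}\frac{(-\Delta)^{\ell}\gg_{n}}{\gg_{n}}\,|u_{n}|^{2}+\mathcal{R}_{0}^{(\ell)}(\gg;u),
\]
and substituting this into the previous display proves~\eqref{eq:hrb_id_init} at level $\ell$.

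I expect the main obstacle to be this last step: the choice of $V$ and the verification that $\div(V\grad\gg)=-(-\Delta)^{\ell}\gg$ for $n\ge\ell$ exactly, which rests on $\grad\gg_{n}=\div\gg_{n-1}$ together with the commutativity of $\grad$, $\div$ and $\Delta$ — a place where a sign or index slip would be easy to overlook. Closely tied to it is the bookkeeping of summation ranges near $n=\ell$: the vanishing $\gg_{n}=0$ for $n<\ell$ is what kills the boundary terms in the summations by parts and what makes the translated form of Theorem~\ref{thm:0} necessary. One must also keep in force throughout the convention that the $k=\ell-1$ coefficient in~\eqref{eq:def_remainder_R} equals $1$ rather than the a priori ill-defined $\div^{\ell}\gg/\div^{\ell}\gg$, since this is exactly what makes the re-indexing of the remainders in the inductive step consistent.
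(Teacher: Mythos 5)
Your proposal is correct and follows essentially the same route as the paper: induction on $\ell$, peeling one order via $\div$, applying the induction hypothesis to $(\div\gg,\div u)$ with the remainder re-indexing $\mathcal{R}^{(\ell-1)}_{k}(\div\gg;\div u)=\mathcal{R}^{(\ell)}_{k+1}(\gg;u)$, and one final application of Theorem~\ref{thm:0} with $V=(-\Delta)^{\ell-1}\div\gg/\div\gg$ (suitably shifted) to produce $\mathcal{R}_{0}^{(\ell)}(\gg;u)$ and the weight term. The only difference is organizational: the paper splits the induction step by the parity of $\ell$, whereas you handle both parities at once via $\sum_{n}|(-\Delta)^{\ell/2}u_{n}|^{2}=\sum_{n}|(-\Delta)^{(\ell-1)/2}\div u_{n}|^{2}$, which is valid.
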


Next, we strengthen positivity assumption~\eqref{eq:assum_A1} imposed on the parameter sequence $\gg$  to ensure non-negativity of remainder terms~\eqref{eq:def_remainder_R} obtaining an abstract discrete Hardy--Rellich--Birman inequality.

\begin{thm}\label{thm:2}
Let $\ell\in\N$. Suppose~\eqref{eq:assum_A1} and
\begin{equation}
(-\Delta)^{\ell-k}\div^{k}\gg_{n}\geq0 \mbox{ for all } n\geq\ell+1-k \mbox{ and } 1\leq k<\ell.
\tag{\bf{A2}}
\label{eq:assum_A2}
\end{equation}
Then for all $u\in\mathcal{H}^{\ell}_{0}$, we have the inequality 
\begin{equation}
\sum_{n=\lceil\ell/2\rceil}^{\infty}\left|(-\Delta)^{\ell/2}u_{n}\right|^{2}\geq\sum_{n=\ell}^{\infty}\rho_{n}(\gg)|u_{n}|^{2},
\label{eq:hrb_ineq_thm2}
\end{equation}
where $\rho(\gg):=(-\Delta)^{\ell}\gg/\gg$. If in addition,
\begin{equation}
(-\Delta)^{\ell}\gg_{n}\geq0 \mbox{ for all } n\geq\ell,
\tag{\bf{A2'}}
\label{eq:assum_A2'}
\end{equation}
then $\rho(\gg)\geq0$, i.e. $\rho(\gg)$ is a discrete Hardy--Rellich--Birman weight.
\end{thm}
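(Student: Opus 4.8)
The plan is to deduce Theorem~\ref{thm:2} directly from the identity in Theorem~\ref{thm:1} by showing that assumption~\eqref{eq:assum_A2} forces every remainder term $\mathcal{R}_{k}^{(\ell)}(\gg;u)$ to be nonnegative. First I would observe that each $\mathcal{R}_{k}^{(\ell)}(\gg;u)$ in~\eqref{eq:def_remainder_R} is a sum over $n$ of a product of a scalar coefficient $(-\Delta)^{\ell-1-k}\div^{k+1}\gg_{n}/\div^{k+1}\gg_{n}$ times the manifestly nonnegative quantity $|\sqrt{\div^{k}\gg_{n}/\div^{k}\gg_{n+1}}\,\div^{k}u_{n+1}-\sqrt{\div^{k}\gg_{n+1}/\div^{k}\gg_{n}}\,\div^{k}u_{n}|^{2}$. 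So it suffices to check that the coefficient is nonnegative for every $n$ in the summation range $n\geq\ell-k$ and every $0\leq k<\ell$. For $k=\ell-1$ the coefficient is by convention $1>0$, so that term is automatically nonnegative. For $0\leq k<\ell-1$ I would split the coefficient into its denominator $\div^{k+1}\gg_{n}$ and its numerator $(-\Delta)^{\ell-1-k}\div^{k+1}\gg_{n}$ and argue each is nonnegative.

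The denominator $\div^{k+1}\gg_{n}$ is positive for $n\geq\ell-(k+1)=\ell-1-k$ by assumption~\eqref{eq:assum_A1} (applied with the index $k+1$, which satisfies $1\leq k+1<\ell$ in this range), hence in particular for all $n\geq\ell-k$, which is the range we need. For the numerator, I would reindex: writing $j=k+1$, the numerator is $(-\Delta)^{\ell-j}\div^{j}\gg_{n}$ with $1\leq j<\ell$, and I need it to be $\geq0$ for $n\geq\ell-k=\ell+1-j$. This is exactly the content of~\eqref{eq:assum_A2} with $j$ in place of $k$. Thus assumptions~\eqref{eq:assum_A1} and~\eqref{eq:assum_A2} together guarantee that every coefficient is a ratio of a nonnegative number over a positive number, so each $\mathcal{R}_{k}^{(\ell)}(\gg;u)\geq0$; dropping the sum $\sum_{k=0}^{\ell-1}\mathcal{R}_{k}^{(\ell)}(\gg;u)$ from the right-hand side of~\eqref{eq:hrb_id_init} yields inequality~\eqref{eq:hrb_ineq_thm2} with $\rho(\gg)=(-\Delta)^{\ell}\gg/\gg$. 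For this last step I should note $\gg_{n}>0$ for $n\geq\ell$ (the $k=0$ case of~\eqref{eq:assum_A1}), so dividing by $\gg_{n}$ is legitimate and $\rho(\gg)$ is well defined on the stated index range.

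Finally, for the last sentence: if additionally~\eqref{eq:assum_A2'} holds, then $(-\Delta)^{\ell}\gg_{n}\geq0$ for $n\geq\ell$, and since $\gg_{n}>0$ there, we get $\rho_{n}(\gg)=(-\Delta)^{\ell}\gg_{n}/\gg_{n}\geq0$ for all $n\geq\ell$; combined with~\eqref{eq:hrb_ineq_thm2} this says precisely that $\rho(\gg)$ satisfies Definition~\ref{def:optim} and hence is a discrete Hardy--Rellich--Birman weight (the inequality holding for all $u\in\mathcal{H}^{\ell}_{0}$, which is the required class). I do not anticipate a genuine obstacle here: the proof is essentially bookkeeping, and the only point requiring care is matching the index shifts between the summation ranges appearing in~\eqref{eq:def_remainder_R}, the positivity ranges in~\eqref{eq:assum_A1}, and the nonnegativity ranges in~\eqref{eq:assum_A2} — in particular checking that the range $n\geq\ell-k$ needed for the coefficient in $\mathcal{R}_{k}^{(\ell)}$ is contained in the range where both numerator and denominator are controlled, which is the mildly fiddly but routine part.
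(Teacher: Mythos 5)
Your proposal is correct and is essentially identical to the paper's own (very short) proof: both drop the remainder terms from the identity of Theorem~\ref{thm:1}, using \eqref{eq:assum_A1} for positivity of the denominators $\div^{k+1}\gg_{n}$ and \eqref{eq:assum_A2} (with the reindexing $j=k+1$) for nonnegativity of the numerators, and then read off $\rho(\gg)\geq0$ from \eqref{eq:assum_A2'}. Your index bookkeeping checks out, so there is nothing to add.
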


By imposing an additional requirement on the asymptotic behavior of $\gg_{n}$ for $n$ large and strict positivity in assumptions \eqref{eq:assum_A2'} and \eqref{eq:assum_A2} for $k=1$ (if $\ell\geq2$), we obtain sufficient conditions for the optimality of the discrete Hardy--Rellich--Birman weights of Theorem~\ref{thm:2} in the next statement.

\begin{thm}\label{thm:3}
Let $\ell\in\N$. Suppose~\eqref{eq:assum_A1}, \eqref{eq:assum_A2}, \eqref{eq:assum_A2'}, and $\gg$ to admit the asymptotic expansion
\begin{equation}
\gg_{n}=\sum_{j=0}^{2\ell}\alpha_{j}n^{\ell-j-s}+\bigO\left(n^{-\ell-1-s}\right), \quad\mbox{ as } n\to\infty,
\label{eq:assum_asympt_g}
\end{equation}
for some $\alpha_{j}\in\R$ with $\alpha_{0}\neq0$ and $s\in(0,1)$. Let $\rho(\gg):=(-\Delta)^{\ell}\gg/\gg$ denote the weight from~\eqref{eq:hrb_ineq_thm2}. Then we have:
\begin{enumerate}[{\upshape i)}]
\item If the expansion
\begin{equation}
\mbox{\eqref{eq:assum_asympt_g} holds with } s\geq 1/2,
\tag{\bf{A3}}
\label{eq:assum_A3}
\end{equation}
then $\rho(\gg)$ is critical.
\item If the expansion
\begin{equation}
\mbox{\eqref{eq:assum_asympt_g} holds with } s=1/2,
\tag{\bf{A3'}}
\label{eq:assum_A3'}
\end{equation}
then $\rho(\gg)$ is optimal near infinity.
\item If the expansion
\begin{equation}\tag{\bf{A3''}}\label{eq:assum_A3''}
\begin{aligned}
&\hskip112pt \mbox{\eqref{eq:assum_asympt_g} holds with } s\leq1/2,\; \\
&(-\Delta)^{\ell}\gg_{n}>0, 
\mbox{ and, if }\; \ell\geq2, \mbox{ also } (-\Delta)^{\ell-1}\div\gg_{n}>0,  \mbox{ for all } n\geq\ell,
\end{aligned}
\end{equation}
then $\rho(\gg)$ is non-attainable.
\end{enumerate}
In particular, if $\gg$ fulfills assumptions~\eqref{eq:assum_A1}, \eqref{eq:assum_A2}, and~\eqref{eq:assum_A3''} with $s=1/2$, then $\rho(\gg)$ is a~strictly positive optimal discrete Hardy--Rellich--Birman weight.
\end{thm}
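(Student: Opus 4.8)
The plan is to derive all three optimality properties from the identity~\eqref{eq:hrb_id_init} of Theorem~\ref{thm:1}, using the asymptotic expansion~\eqref{eq:assum_asympt_g} to control the weight $\rho(\gg)$ and the remainder terms $\mathcal{R}_{k}^{(\ell)}(\gg;u)$. First I would record the asymptotics of the relevant difference expressions: applying $\div^{k}$ and $(-\Delta)^{j}$ to a power $n^{\beta}$ produces, for large $n$, a leading term proportional to $n^{\beta-k}$ and $n^{\beta-2j}$ respectively, with explicit constants (falling-factorial type); hence from~\eqref{eq:assum_asympt_g} one gets $(-\Delta)^{\ell}\gg_{n} = c_{0}\,\alpha_{0}\,n^{-\ell-s}(1+o(1))$ for a positive constant $c_{0}=c_{0}(\ell,s)$ and therefore $\rho(\gg)_{n} = (-\Delta)^{\ell}\gg_{n}/\gg_{n} \sim c_{0} n^{-2\ell}$. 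The point of the hypothesis that the expansion runs through $j=2\ell$ with an $\bigO(n^{-\ell-1-s})$ error is precisely that it pins down $\rho(\gg)_{n}$ up to an error $\bigO(n^{-2\ell-1})$, which is what the criticality argument will need.

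\emph{Criticality (part i), using~\eqref{eq:assum_A3}).} Here I would follow the standard supersolution/null-sequence scheme from~\cite{kel-pin-pog_18b,ger-kre-sta_23}. Suppose~\eqref{eq:hrb_ineq_disc} also holds with a weight $\tilde\rho\geq\rho(\gg)$; set $w:=\tilde\rho-\rho(\gg)\geq0$. Subtracting the inequality for $\tilde\rho$ from the identity~\eqref{eq:hrb_id_init} gives $\sum_{k=0}^{\ell-1}\mathcal{R}_{k}^{(\ell)}(\gg;u)\geq\sum_{n\geq\ell} w_{n}|u_{n}|^{2}$ for all $u\in\mathcal{H}_{0}^{\ell}$. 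Now I would construct an explicit family of test sequences by truncating the parameter sequence itself: take $u^{(R)}_{n}:=\gg_{n}\chi_{n}^{(R)}$, where $\chi^{(R)}$ is a plateau cutoff equal to $1$ on a long middle range and tapering to $0$ outside $[N,2R]$ (or a log-type cutoff as in~\cite{kel-pin-pog_18b}). The key computation is that for $u=\gg$ the remainder terms $\mathcal{R}_{k}^{(\ell)}$ telescope/collapse — each squared difference $\sqrt{\div^{k}\gg_{n}/\div^{k}\gg_{n+1}}\,\div^{k}\gg_{n+1}-\sqrt{\div^{k}\gg_{n+1}/\div^{k}\gg_{n}}\,\div^{k}\gg_{n}$ vanishes identically since $\div^{k}u=\div^{k}\gg$ makes the bracket $(\div^{k}\gg_{n+1}-\div^{k}\gg_{n})/\sqrt{\div^{k}\gg_{n}\div^{k}\gg_{n+1}}\cdot\sqrt{\cdots}$, wait — more precisely, for $u_n=\gg_n$ the expression inside the absolute value is $0$ for $k=\ell-1$ by direct cancellation and for $k<\ell-1$ one is left only with the coefficient times that same vanishing bracket; so $\mathcal{R}_{k}^{(\ell)}(\gg;\gg)=0$. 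Consequently, with the cutoff, $\mathcal{R}_{k}^{(\ell)}(\gg;u^{(R)})$ is supported only on the transition regions and is controlled, using the asymptotics $\gg_n\sim\alpha_0 n^{\ell-s}$ and $\div^k\gg_n\sim n^{\ell-k-s}$ together with $(-\Delta)^{\ell-1-k}\div^{k+1}\gg_n / \div^{k+1}\gg_n \sim c\, n^{-2(\ell-1-k)}$, by a quantity that tends to $0$ as $R\to\infty$ once $s\geq1/2$ (the exponent bookkeeping is exactly where $s\geq1/2$ enters: the transition-region contribution scales like a negative power of $R$ iff $2s\geq1$). On the other hand $\sum_n w_n|u^{(R)}_n|^2 \geq \sum_{n\in[N,R]} w_n \gg_n^2$. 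Letting $R\to\infty$ forces $\sum_{n\geq N} w_n\gg_n^2 \leq 0$, hence $w_n=0$ for all $n\geq N$; then a finite-dimensional/unique-continuation argument (varying $N$, or testing with finitely supported $u$ and using strict positivity pieces) upgrades this to $w\equiv0$, i.e. $\tilde\rho=\rho(\gg)$.

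\emph{Non-attainability (part iii), using~\eqref{eq:assum_A3''}).} Suppose $u\in H^{\ell}$ achieves equality in~\eqref{eq:hrb_ineq_disc} with $\sqrt{\rho(\gg)}u\in\ell^2$. First I would justify that the identity~\eqref{eq:hrb_id_init} extends from $\mathcal{H}_0^\ell$ to such $u$ (an approximation/summability argument in the spirit of the Lemma~\ref{lem:extend} alluded to in the text, using $\sqrt{\rho(\gg)}u\in\ell^2$ and the decay of the coefficients). Equality then forces every remainder term to vanish: $\mathcal{R}_{k}^{(\ell)}(\gg;u)=0$ for each $k$. For $k=0$ this says $\sqrt{\gg_n/\gg_{n+1}}\,u_{n+1}=\sqrt{\gg_{n+1}/\gg_n}\,u_n$ for all $n\geq\ell$ — but only at indices where the coefficient $(-\Delta)^{\ell-1}\div\gg_n/\div\gg_n$ is \emph{nonzero}; the hypothesis $(-\Delta)^{\ell-1}\div\gg_n>0$ (for $\ell\geq2$; for $\ell=1$ the coefficient is $1$) guarantees this holds at every $n\geq\ell$. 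Solving the recursion gives $u_n = C\gg_n$ for $n\geq\ell$, and $u_n=0$ for $n<\ell$ forces the same constant multiple on all of $\Z$, so $u=C\gg$. Finally, finiteness of $\sum_n\rho(\gg)_n|u_n|^2 = C^2\sum_n (-\Delta)^\ell\gg_n\,\gg_n$ combined with $\gg_n\sim\alpha_0 n^{\ell-s}$, $(-\Delta)^\ell\gg_n\sim c_0\alpha_0 n^{-\ell-s}$ gives a series with terms $\sim n^{-2s}$, which diverges for $s\leq1/2$; hence $C=0$ and $u\equiv0$. (The hypothesis $(-\Delta)^{\ell}\gg_n>0$ ensures $\rho(\gg)_n>0$ so that "$\sqrt{\rho(\gg)}u$ square summable" is a genuine constraint and, combined with the above, also yields strict positivity of the weight.)

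\emph{Optimality near infinity (part ii), using~\eqref{eq:assum_A3'}).} Fix $M\geq\ell$ and $\varepsilon>0$. I would again use cutoff multiples of $\gg$, now shifted to start beyond $M$: $u^{(N,R)}_n := \gg_n\,\psi_n$ with $\psi$ supported in $[N,2R]$, $N>M$, equal to $1$ on $[2N,R]$, and logarithmically tapered. By the vanishing $\mathcal{R}_k^{(\ell)}(\gg;\gg)=0$, the left-hand side of~\eqref{eq:def_opt_near_inf} equals $\sum_n\rho(\gg)_n|u^{(N,R)}_n|^2 + \sum_k \mathcal{R}_k^{(\ell)}(\gg;u^{(N,R)})$, and the remainder is again concentrated on the two transition intervals. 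With $s=1/2$ the main-term sum $\sum_n \rho(\gg)_n\gg_n^2\psi_n^2 \asymp \sum_{n} n^{-2\ell}\cdot n^{2\ell-1} = \sum_n n^{-1}$ over the plateau diverges like $\log$, whereas the transition-region remainder stays bounded (a logarithmic cutoff is chosen exactly so that $\sum |\grad\psi|^2$-type quantities over a dyadic block are $\bigO(1/\log)$); so the ratio remainder/main-term $\to0$ as $R\to\infty$, and choosing $R$ large makes the left side $<(1+\varepsilon)\sum_n\rho(\gg)_n|u^{(N,R)}_n|^2$, with $u^{(N,R)}\in\mathcal{H}_0^{M}$ since its support lies beyond $M$.

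The main obstacle is the remainder estimate in the criticality and optimality-near-infinity steps: one must show that $\sum_{k=0}^{\ell-1}\mathcal{R}_k^{(\ell)}(\gg;u^{(R)})$, evaluated on a cutoff multiple of $\gg$, is controlled by (and asymptotically negligible compared to) the diagonal term. This requires (a) the exact cancellation $\mathcal{R}_k^{(\ell)}(\gg;\gg)=0$, so only differences of $\psi$ survive, followed by (b) careful exponent bookkeeping — combining the asymptotics of $\div^k\gg_n$, of the coefficient $(-\Delta)^{\ell-1-k}\div^{k+1}\gg_n/\div^{k+1}\gg_n$, and of the discrete derivatives of the cutoff $\psi$ — to see that the surviving sums scale like a power of $R$ that is negative precisely when $s\geq1/2$ (for criticality) and like a bounded-over-logarithmic quantity when $s=1/2$ (for optimality near infinity). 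The discrete product/Leibniz rules for $\grad$, $\div$, $\Delta$ applied to $\gg_n\psi_n$ make this bookkeeping somewhat intricate for general $\ell$, and handling the full range $0\leq k\leq\ell-1$ uniformly, rather than just the extreme cases $k=0$ and $k=\ell-1$, is the technical heart of the argument.
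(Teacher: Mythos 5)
Your proposal follows essentially the same route as the paper: the identity~\eqref{eq:hrb_id_init} plus the exact cancellation $\mathcal{R}_{k}^{(\ell)}(\gg;\gg)=0$, logarithmic cutoff multiples of $\gg$ (fixed left end for criticality, shifted beyond $M$ with a plateau for optimality near infinity), an extension lemma followed by the vanishing of the $k=0$ remainder and the divergence of $\sum n^{-2s}$ for non-attainability, and the same exponent bookkeeping showing each remainder entry is $\lesssim n^{-s}/\log N$ so that the sum over the transition block is $\bigO(1/\log N)$ precisely when $s\geq 1/2$. The ``technical heart'' you defer is exactly what the paper executes via a discrete Leibniz rule (Lemma~\ref{lem:opt3}), a higher-order mean value theorem for $\div^{m}$ (Lemma~\ref{lem:opt2}), and asymptotics of $\div^{m}\gg$, $\M^{m}\gg$ (Lemma~\ref{lem:opt1}), so your outline is consistent with the paper's argument in all essentials.
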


Next, we turn to a concrete choice of the parameter sequence. Given $\ell\in\N$, we put
\begin{equation}
\mathfrak{g}^{(\ell)}_{n}:=\sqrt{n}\prod_{j=1}^{\ell-1}(n-j)
\label{eq:def_gg_l}
\end{equation}
for $n\geq0$ and $\gg_{n}^{(\ell)}:=0$ for $n<0$. It turns out that $\gg^{(\ell)}$ satisfies each of the assumptions of Theorems \ref{thm:1}--\ref{thm:3}. Consequently, $\gg^{(\ell)}$ gives rise to a concrete  optimal strictly positive discrete Hardy--Rellich--Birman weight. Further concrete (but more complicated) examples are discussed in Section~\ref{sec:gen_weights}.

\begin{thm}\label{thm:4}
For any $\ell\in\N$, the weight $\rho^{(\ell)}$ given by
\begin{equation}
\rho_{n}^{(\ell)}:=\frac{(-\Delta)^{\ell}\gg_{n}^{(\ell)}}{\gg_{n}^{(\ell)}}
\label{eq:def_rho_l}
\end{equation}
for $n\geq\ell$, is the optimal strictly positive discrete Hardy--Rellich--Birman weight.
\end{thm}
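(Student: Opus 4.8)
The plan is to verify that the explicit parameter sequence $\gg^{(\ell)}$ defined in~\eqref{eq:def_gg_l} satisfies all the hypotheses of Theorems~\ref{thm:1}--\ref{thm:3}, so that the conclusion follows by invoking the last sentence of Theorem~\ref{thm:3}. Thus the proof reduces to four verifications: assumption~\eqref{eq:assum_A1}, assumption~\eqref{eq:assum_A2}, assumption~\eqref{eq:assum_A3''} with $s=1/2$ (which includes the asymptotic expansion~\eqref{eq:assum_asympt_g} and the strict positivity requirements), and of course that $\gg^{(\ell)}\in H^{\ell}$, the latter being immediate since $\gg^{(\ell)}_{n}=\sqrt{n}\prod_{j=1}^{\ell-1}(n-j)$ vanishes for $n=0,1,\dots,\ell-1$.

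First I would record the key structural fact about $\gg^{(\ell)}$: since $\prod_{j=1}^{\ell-1}(n-j)$ is (up to a shift) a falling factorial, the forward-difference operator $\div$ acts nicely on it. Concretely I expect a clean formula of the shape $\div^{k}\gg^{(\ell)}_{n}=c_{k}\,\sqrt{n+\text{(shift)}}\cdot(\text{product of }\ell-1-k\text{ consecutive integers})\cdot(1+\dots)$, or more precisely a representation of $\div^{k}\gg^{(\ell)}$ as a positive-coefficient combination of terms $\sqrt{n+i}\,\prod(n-j)$; the cleanest route is to write $\sqrt{n}$ via its own (infinite) difference expansion and combine with the exact annihilation property of $\div$ on polynomials of degree $<\ell-1-k$. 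From such a formula the positivity of $\div^{k}\gg^{(\ell)}_{n}$ for $n\geq\ell-k$ (i.e.~\eqref{eq:assum_A1}) should follow by inspection, since each factor $n-j$ with $j\le\ell-1$ is nonnegative in that range and the $\sqrt{\cdot}$ contributions are positive. For~\eqref{eq:assum_A2} and the strict positivity clauses in~\eqref{eq:assum_A2'} and~\eqref{eq:assum_A3''}, I would similarly compute $(-\Delta)^{\ell-k}\div^{k}\gg^{(\ell)}$ and $(-\Delta)^{\ell}\gg^{(\ell)}$, $(-\Delta)^{\ell-1}\div\gg^{(\ell)}$, using that $(-\Delta)^{m}$ annihilates polynomials of degree $<2m$ and expanding $\sqrt{n}$ (or the shifted square roots) in an asymptotic/convergent series; the resulting expressions should turn out to be positive because the leading surviving term comes from the half-integer power $n^{\ell-1/2}$ whose repeated Laplacian has a definite sign, and the lower-order corrections are controlled. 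This mirrors the known computations $\rho^{\mathrm{GKS}}=\Delta^2 n^{3/2}/n^{3/2}$ and $(-\Delta)^{\ell}n^{\ell-1/2}/n^{\ell-1/2}$ from~\cite{ger-kre-sta_23}, and I would lean on the techniques (and perhaps auxiliary lemmas from the appendix) already developed there.

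The asymptotic expansion~\eqref{eq:assum_asympt_g} with $s=1/2$ is the most mechanical part: expanding the product $\prod_{j=1}^{\ell-1}(n-j)=n^{\ell-1}+\dots$ and multiplying by $\sqrt n=n^{1/2}$ gives $\gg^{(\ell)}_{n}=n^{\ell-1/2}+\dots$, a polynomial in $n$ times $n^{1/2}$, hence exactly of the form $\sum_{j=0}^{\ell-1}\alpha_{j}n^{\ell-j-1/2}$ with $\alpha_{0}=1\neq0$ — in fact the remainder term is identically zero here, so~\eqref{eq:assum_asympt_g} holds trivially (with more than enough terms). With~\eqref{eq:assum_A1}, \eqref{eq:assum_A2}, and~\eqref{eq:assum_A3''} (at $s=1/2$) in hand, the final sentence of Theorem~\ref{thm:3} immediately gives that $\rho^{(\ell)}=(-\Delta)^{\ell}\gg^{(\ell)}/\gg^{(\ell)}$ is a strictly positive optimal discrete Hardy--Rellich--Birman weight, which is exactly the assertion of Theorem~\ref{thm:4}.

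The main obstacle I anticipate is establishing the non-negativity in~\eqref{eq:assum_A2}, namely $(-\Delta)^{\ell-k}\div^{k}\gg^{(\ell)}_{n}\geq0$ for all $1\le k<\ell$ and $n\ge \ell+1-k$, together with the strict positivity of $(-\Delta)^{\ell}\gg^{(\ell)}_{n}$ and $(-\Delta)^{\ell-1}\div\gg^{(\ell)}_{n}$ for all $n\ge\ell$ (not merely for large $n$). Unlike the asymptotic statements, these are genuinely global sign conditions. I would handle them by finding an exact closed form — most plausibly an integral representation of $\sqrt{n}$ (e.g.\ $n^{1/2}=c\int_0^\infty(1-e^{-nt})t^{-3/2}\,\dd t$ or a Beta-integral for the shifted square roots) that converts the difference operators into manifestly positive integrands — or, failing a slick integral, by an induction on $\ell$ that peels off one application of $\div$ or $(-\Delta)$ at a time while tracking positivity, analogous to the inductive arguments in~\cite{huang-ye_24}. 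Verifying strict positivity at the small boundary indices $n=\ell,\ell+1,\dots$ may require a separate direct check, since asymptotic positivity alone does not cover them; this finite boundary analysis is the fiddly endpoint of the argument.
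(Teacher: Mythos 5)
Your top-level reduction is exactly the paper's: check that $\gg^{(\ell)}$ satisfies \eqref{eq:assum_A1}, \eqref{eq:assum_A2}, and \eqref{eq:assum_A3''} with $s=1/2$, note that the asymptotic expansion \eqref{eq:assum_asympt_g} is trivially exact for $\gg^{(\ell)}_{n}=\sqrt{n}\prod_{j=1}^{\ell-1}(n-j)$, and invoke the last sentence of Theorem~\ref{thm:3} (together with Theorem~\ref{thm:2}). That part is fine. The genuine gap is that the entire substance of the paper's proof --- the global sign conditions $\div^{k}\gg^{(\ell)}_{n}>0$ and $(-\Delta)^{\ell-k}\div^{k}\gg^{(\ell)}_{n}>0$ for \emph{all} $n$ in the stated ranges --- is only flagged as ``the main obstacle'' and never actually established. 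You correctly sense that asymptotic reasoning (``the leading surviving term comes from $n^{\ell-1/2}$ and the lower-order corrections are controlled'') cannot close this, since positivity is needed down to $n=\ell-k$, and you then list three speculative remedies (a positive-coefficient difference expansion, an integral representation of $\sqrt{n}$, or an induction peeling off one operator at a time) without carrying any of them out. As it stands, the proposal is a correct reduction plus an unproved claim.

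The mechanism the paper actually uses is worth knowing because it is short and exact, not asymptotic. Write $\gg^{(\ell)}(x)=\sqrt{x}(x-1)\cdots(x-\ell+1)=\sum_{j=1}^{\ell}s(\ell,j)x^{j-1/2}$ using Stirling numbers of the first kind, and apply the discrete mean value theorem (Lemma~\ref{lem:opt2}): $\div^{N}\gg^{(\ell)}_{n}=\frac{\dd^{N}\gg^{(\ell)}}{\dd x^{N}}(\xi)$ for some $\xi$ in the relevant interval. Since $(-1)^{\ell+j}s(\ell,j)>0$ and the $N$-th derivative of $x^{j-1/2}$ has sign $(-1)^{N-j}$ on $(0,\infty)$ once $N\geq j$, \emph{every} term of $(-1)^{N-\ell}\,\dd^{N}\gg^{(\ell)}/\dd x^{N}$ is positive for $x>0$; taking $N=2\ell-k$ gives claim \eqref{eq:assum_A2} (and its strict versions) at once, for all indices, with no separate boundary analysis. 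For \eqref{eq:assum_A1} the same argument gives $\div^{\ell}\gg^{(\ell)}>0$ on $\N_{0}$, and one then descends from $k=\ell$ to $k=0$ by monotonicity starting from the boundary values $\div^{k}\gg^{(\ell)}_{\ell-k}=\gg^{(\ell)}_{\ell}>0$ --- this descent is the only place where the boundary indices enter, and it is the closest relative of your ``peel off one operator while tracking positivity'' idea. To make your proposal a proof you would need to supply this (or an equivalent, e.g.\ your integral-representation route) in full.
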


Our final theorem summarizes properties of the optimal weight $\rho^{(\ell)}$ in greater detail. A~remarkable property is that, for $n\geq\ell\geq2$, $\rho^{(\ell)}_{n}$ has a convergent series representation in negative powers of $n$ with all coefficients \emph{positive}. Consequently, using more terms of the truncated series representation always produces a tighter inequality. The leading term yields the classical discrete Birman weight of~\eqref{eq:birman_ineq_disc}.

To formulate our final theorem (and subsequent remarks), we need to introduce several combinatorial numbers. First, the \emph{binomial number} and the \emph{Pochhamer symbol} are defined by the standard formulas
\[
 \binom{\nu}{n}:=\frac{\nu(\nu-1)\dots(\nu-n+1)}{n!} \quad\mbox{ and }\quad  (\nu)_{n}:=\nu(\nu+1)\dots(\nu+n-1)
\]
for any $\nu\in\R$ and $n\in\N_{0}$. Next, for $n\in\N$ and $0\leq k< n$, we denote 
\begin{equation}
 s(n,k):=(-1)^{n+k}\sum_{1\leq i_{1}<\dots<i_{n-k}<n}i_{1}i_{2}\dots i_{n-k}
\label{eq:def_stirling_1st}
\end{equation}
and
\begin{equation}
 S(n,k):=\sum_{\substack{j_{1},\dots,j_{k}\geq0 \\ j_{1}+\dots+j_{k}=n-k}}1^{j_{1}}2^{j_{2}}\dots k^{j_{k}}
\label{eq:def_stirling_2nd}
\end{equation}
the \emph{Stirling numbers of the first and second kind}, respectively, see~\cite[\S~26.8]{dlmf}. For $n=k\geq0$, $s(n,n)=S(n,n):=1$. By convention, we also put $s(n,k):=0$ for $k<0$.
Finally, we will make use of numbers
\begin{equation}
 X_{m}^{(\ell)}:=\sum_{j=-\ell}^{\ell}\binom{2\ell}{\ell+j}(-1)^{j}j^{m}
\label{eq:def_X_ml}
\end{equation}
for $m,\ell\in\N$; $X_{0}^{(\ell)}:=0$ for all $\ell\in\N$. It is obvious that $X_{m}^{(\ell)}=0$ if $m$ is odd.
Moreover, we know from \cite[Sec.~4]{ger-kre-sta_23} that 
\begin{equation}
X_{m}^{(\ell)}=0, \quad \forall m<2\ell,
\label{eq:X_vanish_id}
\end{equation}
and that for the remaining values we have
\begin{equation}
X_{2\ell}^{(\ell)}=(-1)^{\ell}(2\ell)! \quad\mbox{ and }\quad X_{2\ell+2r}^{(\ell)}
=(-1)^{\ell}(2\ell)!\sum_{1\leq k_{1}\leq\dots \leq k_{r}\leq\ell}(k_{1}k_{2}\dots k_{r})^{2}
\label{eq:X_id_pos}
\end{equation}
for $r\in\N$, see~\cite[Lem.~4.1]{ger-kre-sta_23}. Expression~\eqref{eq:X_id_pos} reveals the nontrivial fact that $(-1)^{\ell}X_{2\ell+2r}^{(\ell)}$ is a~positive integer for all $r\in\N_{0}$.

\begin{thm}\label{thm:5}
Let $\ell\in\N$ and $\rho^{(\ell)}$ be defined by formulas~\eqref{eq:def_rho_l} and~\eqref{eq:def_gg_l}.
\begin{enumerate}[{\upshape i)}]
\item Weight sequence $\rho^{(\ell)}$ admits the convergent series expansion 
\begin{equation}
 \rho_{n}^{(\ell)}=\frac{n^{\ell-1}}{(n-1)\dots(n-\ell+1)}\sum_{k=2\ell}^{\infty}\frac{r_{k}^{(\ell)}}{n^{k}}
\label{eq:rho_expansion}
\end{equation}
for all $n\geq\ell$, where 
\begin{equation}
 r_{k}^{(\ell)}=\sum_{m=2\ell}^{k}\binom{\ell+m-k-1/2}{m}s(\ell,\ell+m-k)X_{m}^{(\ell)}.
\label{eq:coeff_r_kl}
\end{equation}
In particular, we have
\[
 r_{2\ell}^{(\ell)}=\left(\frac{1}{2}\right)_{\ell}^{2} 
 \quad\mbox{ and }\quad
 r_{2\ell+1}^{(\ell)}=\frac{\ell(\ell-1)(2l+1)}{2(2l-1)}\left(\frac{1}{2}\right)_{\ell}^{2}.
\]
\item For all $\ell\geq2$ and $k\geq2\ell$, $r_{k}^{(\ell)}>0$.\\ (If $\ell=1$, then $r_{2k+1}^{(1)}=0$  and $r_{2k}^{(1)}>0$ for all $k\geq1$.)
\item For all $n\geq\ell\geq2$, we have 
\[
\rho_{n}^{(\ell)}>\frac{(-\Delta)^{\ell} n^{\ell-1/2}}{n^{\ell-1/2}}>\left(\frac{1}{2}\right)_{\ell}^{2}\frac{1}{n^{2\ell}}.
\]
\end{enumerate}
\end{thm}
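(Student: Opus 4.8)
The plan is to establish Theorem~\ref{thm:5} by first deriving the series expansion in part~(i) from a direct computation of $(-\Delta)^{\ell}\gg^{(\ell)}_{n}$, then reading off positivity in part~(ii) from the representation~\eqref{eq:coeff_r_kl} together with the known sign properties of the Stirling numbers and the $X^{(\ell)}_{m}$, and finally obtaining the chain of inequalities in part~(iii) from a term-by-term comparison. For part~(i), I would start from $\gg^{(\ell)}_{n}=\sqrt{n}\prod_{j=1}^{\ell-1}(n-j)$ and write $\prod_{j=1}^{\ell-1}(n-j)=\sum_{k}s(\ell,\ell-k)\,n^{\ell-1-k}$ via the definition~\eqref{eq:def_stirling_1st} of the unsigned/signed Stirling numbers of the first kind, so that $\gg^{(\ell)}_{n}=\sum_{k\geq0}s(\ell,\ell-k)\,n^{\ell-1/2-k}$ as a finite sum. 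Then $(-\Delta)^{\ell}$ acts on each monomial $n^{\beta}$ by $(-\Delta)^{\ell}n^{\beta}=\sum_{j=-\ell}^{\ell}\binom{2\ell}{\ell+j}(-1)^{\ell+1}(-1)^{j}(n+j)^{\beta}$ -- I would expand $(n+j)^{\beta}$ by the binomial series in $j/n$, collect powers of $n$, and recognize the inner $j$-sums as precisely the numbers $X^{(\ell)}_{m}$ of~\eqref{eq:def_X_ml}. The vanishing identity~\eqref{eq:X_vanish_id} kills all terms with $m<2\ell$, which is exactly what makes the resulting series for $\rho^{(\ell)}_{n}=(-\Delta)^{\ell}\gg^{(\ell)}_{n}/\gg^{(\ell)}_{n}$ start at $k=2\ell$; matching coefficients then yields~\eqref{eq:coeff_r_kl}. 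The two explicit values $r^{(\ell)}_{2\ell}=(1/2)_{\ell}^{2}$ and $r^{(\ell)}_{2\ell+1}$ follow by specializing $k=2\ell$ and $k=2\ell+1$ in~\eqref{eq:coeff_r_kl}, using $X^{(\ell)}_{2\ell}=(-1)^{\ell}(2\ell)!$ from~\eqref{eq:X_id_pos}, $s(\ell,\ell)=1$, and the identity $\binom{\ell-1/2}{2\ell}(-1)^{\ell}(2\ell)!=(1/2)_{\ell}^{2}$ (a routine Pochhammer manipulation); for $r^{(\ell)}_{2\ell+1}$ one also needs $X^{(\ell)}_{2\ell+1}=0$ and $s(\ell,\ell-1)=-\binom{\ell}{2}$.

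For part~(ii), the representation~\eqref{eq:coeff_r_kl} exhibits $r^{(\ell)}_{k}$ as a sum over $m$ from $2\ell$ to $k$ of three factors: $\binom{\ell+m-k-1/2}{m}$, $s(\ell,\ell+m-k)$, and $X^{(\ell)}_{m}$. Only even $m$ contribute since $X^{(\ell)}_{m}=0$ for odd $m$. I would show each nonzero summand has the same sign. From~\eqref{eq:X_id_pos}, $(-1)^{\ell}X^{(\ell)}_{m}>0$ for even $m\geq2\ell$. The signed Stirling numbers satisfy $(-1)^{\ell-j}s(\ell,j)\geq0$, so with $j=\ell+m-k$ the sign of $s(\ell,\ell+m-k)$ is $(-1)^{k-m}$. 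Finally, since $0\le \ell + m - k \le \ell$ in the summation range, the argument $\ell+m-k-1/2$ of the binomial is a half-integer in $[-1/2,\ell-1/2]$, and a short argument using $\binom{\nu}{m}=\nu(\nu-1)\cdots(\nu-m+1)/m!$ with $\nu=\ell+m-k-1/2$ and $m\geq2\ell>\nu$ shows this binomial has sign $(-1)^{m-(\ell+m-k)}=(-1)^{k-\ell}$ (the product contains exactly $m-\lceil\nu\rceil$... one has to count sign changes carefully, but the point is all the non-half-integer factors $\nu-i$ for $i$ large enough are negative). Multiplying the three signs $(-1)^{\ell}\cdot(-1)^{k-m}\cdot(-1)^{k-\ell}=(-1)^{2k-m}=(-1)^{m}=1$ for even $m$ gives that every summand is positive, hence $r^{(\ell)}_{k}>0$ for $\ell\geq2$. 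For $\ell=1$, the product $\prod_{j=1}^{0}$ is empty so $\gg^{(1)}_{n}=\sqrt{n}$ and $\rho^{(1)}$ is even in the relevant sense, forcing $r^{(1)}_{2k+1}=0$; positivity of $r^{(1)}_{2k}$ is the known optimal Hardy case and also follows from the sign analysis.

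For part~(iii), the middle quantity $(-\Delta)^{\ell}n^{\ell-1/2}/n^{\ell-1/2}$ is exactly what one obtains by running the same computation as in part~(i) but with $\gg^{(\ell)}$ replaced by the single monomial $n^{\ell-1/2}$; its series expansion is $\sum_{k\geq0}\binom{\ell-1/2}{2\ell+2k}X^{(\ell)}_{2\ell+2k}(-1)^{?}n^{-2\ell-2k}$, i.e. it is precisely the $m=2\ell$ (leading Stirling) piece of the numerator divided by the leading piece of $\gg^{(\ell)}$. The rightmost inequality, $(-\Delta)^{\ell}n^{\ell-1/2}/n^{\ell-1/2}>(1/2)_{\ell}^{2}/n^{2\ell}$, holds because subtracting the leading term $r^{(1)}_{2\ell}/n^{2\ell}=(1/2)_{\ell}^{2}/n^{2\ell}$ leaves a series with positive coefficients (by the $\ell=1$-type positivity, or directly from~\eqref{eq:X_id_pos}). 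The left inequality $\rho^{(\ell)}_{n}>(-\Delta)^{\ell}n^{\ell-1/2}/n^{\ell-1/2}$ is the substantive one: I would prove it by comparing $\gg^{(\ell)}_{n}=n^{\ell-1/2}(1+\text{lower order})$ against $n^{\ell-1/2}$ and showing the extra lower-order Stirling corrections strictly increase the ratio $(-\Delta)^{\ell}\gg/\gg$ at every $n\ge\ell$; concretely, write $\rho^{(\ell)}_{n}=\frac{n^{\ell-1}}{(n-1)\cdots(n-\ell+1)}\sum_{k\ge2\ell}r^{(\ell)}_{k}n^{-k}$ and $\frac{(-\Delta)^{\ell}n^{\ell-1/2}}{n^{\ell-1/2}}=\sum_{k\ge2\ell}\tilde r_k n^{-k}$, note the prefactor $n^{\ell-1}/[(n-1)\cdots(n-\ell+1)]>1$ for $n>\ell-1$ and $r^{(\ell)}_{2\ell}=\tilde r_{2\ell}$, $r^{(\ell)}_k\ge\tilde r_k>0$ termwise (the hardest inequality to verify, since it requires showing the Stirling corrections to $\gg$ only add nonnegative contributions to each $r^{(\ell)}_k$ — I expect to reduce this to a monotonicity/convexity statement about the map $\gg\mapsto(-\Delta)^{\ell}\gg/\gg$, or simply to a coefficient-wise comparison in~\eqref{eq:coeff_r_kl} since $\tilde r_k$ is the $m=k-$extreme... actually $\tilde r_k$ corresponds to keeping only $s(\ell,\ell)=1$, i.e. the $\ell+m-k=\ell$, $m=k$ term, whereas $r^{(\ell)}_k$ has additional terms all shown positive in part~(ii)). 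Thus the termwise comparison is immediate once part~(ii) is in hand. The main obstacle throughout is the careful bookkeeping of signs of the half-integer binomials $\binom{\ell+m-k-1/2}{m}$ in part~(ii); everything else is a matter of organizing the binomial/Stirling expansions and invoking~\eqref{eq:X_vanish_id}--\eqref{eq:X_id_pos}.
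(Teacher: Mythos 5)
Your proposal follows essentially the same route as the paper's proof for all three parts: expand $\gg^{(\ell)}$ in half-integer powers via Stirling numbers of the first kind, apply $(-\Delta)^{\ell}$ monomial by monomial using the binomial series to produce the numbers $X^{(\ell)}_{m}$, invoke~\eqref{eq:X_vanish_id}--\eqref{eq:X_id_pos}, and read off signs of the three factors in~\eqref{eq:coeff_r_kl}; your sign bookkeeping in part~(ii) matches the paper's exactly. Three small points deserve attention. First, your expression $(-\Delta)^{\ell}n^{\beta}=\sum_{j}\binom{2\ell}{\ell+j}(-1)^{\ell+1}(-1)^{j}(n+j)^{\beta}$ carries a spurious factor $(-1)^{\ell+1}$: since $-\Delta=-(\SS^{1/2}-\SS^{-1/2})^{2}$, one gets $(-\Delta)^{\ell}=\sum_{j=-\ell}^{\ell}\binom{2\ell}{\ell+j}(-1)^{j}\SS^{j}$ with no extra sign, which is what makes the inner sums literally equal to $X^{(\ell)}_{m}$. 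Second, the theorem asserts convergence of~\eqref{eq:rho_expansion} for all $n\geq\ell$, but the binomial series for $(1+j/n)^{\nu}$ is only automatic for $|j/n|<1$, i.e.\ $n>\ell$; the endpoint $n=\ell$ (where $j=\pm\ell$) requires the asymptotics $X^{(\ell)}_{2m}\sim 2(-1)^{\ell}\ell^{2m}$ and $\binom{\nu}{m}\sim(-1)^{m}/(\Gamma(-\nu)m^{\nu+1})$ to see that the summands still decay (using $\nu>0$), which the paper checks explicitly and you omit. Third, your sign argument in part~(ii) yields that every summand is \emph{nonnegative}; for strict positivity you must exhibit one nonvanishing term, namely $m=k$ (using $s(\ell,\ell)=1$) for $k$ even and $m=k-1$ (using $s(\ell,\ell-1)=-\binom{\ell}{2}\neq0$, so $\ell\geq2$ is needed) for $k$ odd, exactly as the paper does. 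In part~(iii) you diverge mildly from the paper: you compare the two series coefficientwise, observing that $\binom{\ell-1/2}{k}X^{(\ell)}_{k}$ is precisely the $m=k$ term of~\eqref{eq:coeff_r_kl} and the remaining terms are positive by part~(ii), whereas the paper shows pointwise positivity of each $s(\ell,j)(-\Delta)^{\ell}n^{j-1/2}$ via the discrete mean value theorem (Lemma~\ref{lem:opt2}); both arguments are correct and combine with the prefactor bound $n^{\ell-1}/[(n-1)\cdots(n-\ell+1)]>1$ in the same way, yours having the advantage of reusing part~(ii) and avoiding the auxiliary lemma.
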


\begin{rem} We complement Theorem~\ref{thm:5} by several remarks.
\begin{enumerate}[a)]
\item Since $s(\ell,j)=0$ whenever $j\leq0$ and $X_{m}^{(\ell)}=0$ whenever $m$ is odd, we may restrict the range of the summation index $m$ in formula~\eqref{eq:coeff_r_kl} even further. The formula with each summand positive reads
\[
 r_{k}^{(\ell)}=\sum_{\substack{m=\max(2\ell,k-\ell+1) \\ m=0 \hskip-5pt \mod 2}}^{k}\binom{\ell+m-k-1/2}{m}s(\ell,\ell+m-k)X_{m}^{(\ell)}.
\]
\item We conjecture that $4^{k-\ell}r_{k}^{(\ell)}\in\N$ for all $\ell\geq2$ and $k\geq2\ell$. These integer sequences are not listed in \texttt{oeis.org} (by April 25, 2024).
\item By using only the terms in~\eqref{eq:rho_expansion} with $k=2\ell,2\ell+1,\dots,3\ell-1$ and a little algebra, we obtain the inequality
\[
\rho_{n}^{(\ell)}>\frac{n^{\ell-1}}{16^{\ell}(n-1)\dots(n-\ell+1)}\sum_{j=1}^{\ell}\frac{(2j)!(4\ell-2j)!}{j!(2\ell-j)!}\frac{|s(\ell,j)|}{n^{3\ell-j}}
\]
for all $n\geq\ell\geq1$.
\item If needed, one can easily expand also the prefactor in front of the sum from~\eqref{eq:rho_expansion} in terms of negative powers of $n$ and Stirling numbers of the second kind defined by~\eqref{eq:def_stirling_2nd}. Namely, we have
\[
 \frac{n^{\ell-1}}{(n-1)\dots(n-\ell+1)}=\sum_{j=0}^{\infty}\frac{S(j+\ell-1,\ell-1)}{n^{j}}
\]
for all $n\geq\ell\geq1$, see~\cite[Eq.~(26.8.11)]{dlmf}, and the complete expansion of $\rho_{n}^{(\ell)}$ then reads
\[
 \rho_{n}^{(\ell)}=\sum_{m=2\ell}^{\infty}\left(\sum_{k=2\ell}^{m}S(m-k+\ell-1,\ell-1)r_{k}^{(\ell)}\right)\frac{1}{n^{m}}.
\]
For $\ell\geq2$, every coefficient of this expansion is positive as one readily deduces from claim~(ii) of Theorem~\ref{thm:5} and~\eqref{eq:def_stirling_2nd}.
\item Expansions in first few terms for $\ell=2,3,4,5$, as $n\to\infty$, are listed below.
\begin{align*}
\rho_{n}^{(2)}&=\frac{9}{16n^{4}}+\frac{3}{2n^{5}}+\frac{297}{128n^{6}} + \bigO\left(\frac{1}{n^{7}}\right),\\
\rho_{n}^{(3)}&=\frac{225}{64n^{6}}+\frac{405}{16n^{7}}+\frac{114975}{1024n^{8}} + \bigO\left(\frac{1}{n^{9}}\right),\\
\rho_{n}^{(4)}&=\frac{11025}{256n^{8}}+\frac{4725}{8n^{9}}+\frac{4879665}{1024n^{10}} + \bigO\left(\frac{1}{n^{11}}\right),\\
\rho_{n}^{(5)}&=\frac{893025}{1024n^{10}}+\frac{2480625}{128n^{11}}+\frac{4023077625}{16384n^{12}} + \bigO\left(\frac{1}{n^{13}}\right).
\end{align*}
In~\cite[Sec.~7.2]{huang-ye_24}, the authors ask whether the constant $15/16$ that appears by the second term in the expansion of the Rellich weight~\eqref{eq:rho_HY_asympt} is sharp. The above expansion of $\rho_{n}^{(2)}$ shows that it is not the case.
\item The first inequality of claim (iii) in Theorem~\ref{thm:5} shows that $\rho^{(\ell)}$ improves upon the Birman weights suggested by Gerhat--Krej{\v c}i{\v r}{\' i}k--{\v S}tampach in~\cite[Sec.~4]{ger-kre-sta_23} and proves the conjecture~\eqref{eq:birman_ineq_improved_conj} formulated therein in the affirmative.
\item Noticing that
\[
 \left(\frac{1}{2}\right)_{\ell}^{2}=\frac{\left((2\ell)!\right)^{2}}{16^{\ell}\left(\ell!\right)^{2}},
\]
the second inequality of claim (iii) in Theorem~\ref{thm:5} shows that $\rho^{(\ell)}$ improves upon the discrete analogue to the classical Birman weights, see~\eqref{eq:birman_ineq_disc}.
\end{enumerate}
\end{rem}

\subsection{Matrix formulation and connections to other works}\label{subsec:matr_form}

By polarization, Theorem~\ref{thm:1} and~\ref{thm:2} yield an equality between sesquilinear forms on $\mathcal{H}_{0}^{\ell}$. Namely,
\begin{equation}
 \langle u, (-\Delta)^{\ell} v\rangle =\langle u,\rho(\gg) v\rangle + \sum_{k=0}^{\ell-1}\langle R_{k}^{(\ell)}(\gg)u,R_{k}^{(\ell)}(\gg)v\rangle
\label{eq:forms_id}
\end{equation}
for all $u,v\in\mathcal{H}_{0}^{\ell}$, where $\rho(\gg)=(-\Delta)^{\ell}\gg/\gg$,
\begin{align}
R_{k}^{(\ell)}(\gg)u_{n}&:=\sqrt{\frac{(-\Delta)^{\ell-1-k}\div^{k+1}\gg_{n}}{\div^{k+1}\gg_{n}}}\left(\sqrt{\frac{\div^{k}\gg_{n}}{\div^{k}\gg_{n+1}}}\,\div^{k}u_{n+1}-\sqrt{\frac{\div^{k}\gg_{n+1}}{\div^{k}\gg_{n}}}\,\div^{k}u_{n}\right)\!, \nonumber \\
\label{eq:def_Ru_critic_inproof}
\end{align}
if $n\geq\ell-k$, and $R_{k}^{(\ell)}(\gg)u_{n}:=0$, if $n<\ell-k$.

By taking $u=\delta_{n}$ and $v=\delta_{m}$ for $m,n\geq\ell$, where $\{\delta_{n}\mid n\in\Z\}$ denotes the standard basis of $\ell^{2}(\Z)$, equality \eqref{eq:forms_id} yields an algebraic identity on the level of semi-infinite matrices. Namely, restricting indices of the respective matrices to $\ell, \ell+1, \dots$, we obtain the identity
\begin{equation}
 (-\Delta)^{\ell}=\rho(\gg)+\sum_{k=0}^{\ell-1}\left(\tilde{R}_{k}^{(\ell)}(\gg)\right)^{*}\tilde{R}_{k}^{(\ell)}(\gg)
\label{eq:matrix_id}
\end{equation}
between semi-infinite matrices, where $\tilde{R}_{k}^{(\ell)}(\gg):=\SS^{-k}R_{k}^{(\ell)}(\gg)$ and $\SS^{-k}$ acts as the backward shift of the index by $k$, see~\eqref{eq:def_S} below. The shift $\SS^{-k}$ is present as a consequence of the range of summation index $n$ in~\eqref{eq:def_remainder_R} starting from $\ell-k$.
If $\ell=1$, such factorization has been used to provide an alternative proof of the optimal discrete Hardy inequality in~\cite{ger-kre-sta_23}. For $\ell=2$, the authors of~\cite{ger-kre-sta_23} factorized matrix $(-\Delta)^{2}-\rho(\gg)$, with $\gg_{n}=n^{3/2}$, into a single remainder matrix of form $X^{*}X$, where $X$ is a tridiagonal matrix. As the remainder was sought in terms of a single matrix rather than two matrices, its entries could not be found explicitly. The idea was to decompose the pentadiagonal matrix $(-\Delta)^{2}-\rho(\gg)$ into a product of a tridiagonal matrix and its adjoint reducing the order of the corresponding difference operators. 
A similar idea of factorization has been applied also in the continuous setting~\cite{ges-lit_18}.
  
The idea behind~\eqref{eq:matrix_id} is similar, however, its main novelty is that the order is reduced successively giving rise to more remainder terms on the right, but expressed fully explicitly in terms of the parameter sequence $\gg$. In addition, the diagonal term $\rho(\gg)$, i.e. the actual Hardy-Rellich-Birman weight, is identified in terms of $\gg$, too. Such an explicit description was essential for the discovery of the concrete Hardy--Rellich--Birman weights of Theorem~\ref{thm:4} and proof of their optimality.

The matrix identity~\eqref{eq:matrix_id} can be viewed as a factorization of particular banded Toeplitz matrices. Indeed, the non-vanishing matrix elements of $(-\Delta)^{\ell}$ are
\[
 (-\Delta)^{\ell}_{m,n}\equiv\langle\delta_{m},(-\Delta)^{\ell}\delta_{n}\rangle=(-1)^{n-m}\binom{2\ell}{\ell+n-m}
\]
for $m,n\geq\ell$ with $|n-m|\leq\ell$, i.e. the matrix representation of $(-\Delta)^{\ell}$ with respect to $\{\delta_{n} \mid n\geq\ell\}$ is a semi-infinite Hermitian banded Toeplitz matrix with diagonals given by the binomial coefficients. On the other hand, by inspection of matrix entries of remainder matrices~\eqref{eq:def_Ru_critic_inproof}, we observe that $\tilde{R}_{k}^{(\ell)}(\gg)$ are semi-infinite $(k+2)$-diagonal lower Hessenberg matrices, i.e. the $(m,n)$th entry of $\tilde{R}_{k}^{(\ell)}(\gg)$ vanishes if $n-m>1$ or $m-n>k$. Regardless of the application in discrete Hardy--Rellich--Birman inequalities, the non-trivial factorization  identity~\eqref{eq:matrix_id} can be of independent interest.

Further, we point out differences with a recent study~\cite{ger-kre-sta_arxiv24}, where criticality of a general positive power of the discrete Laplacian on the half-line has been analyzed. The definition of an operator assigned to a power of the discrete Laplacian studied in~\cite{ger-kre-sta_arxiv24} differs from the one examined in the recent paper. The authors of~\cite{ger-kre-sta_arxiv24} considered operator $T:=-\Delta|_{\ell^{2}(\N)}$ and defined its positive power $T^{\alpha}$, $\alpha>0$, by the standard functional calculus using the spectral resolution of $T$. While $T$ coincides with $-\Delta|\mathcal{H}^{1}$ after an obvious identification of spaces $\ell^{2}(\N)$ and $\mathcal{H}^{1}$, their integer powers differ by a finite rank operator. This can be readily seen from their matrix representations. For example, for $\ell=3$, operators $(-\Delta)^{3}$ and $T^{3}$ are determined by the semi-infinite matrices
\[
 \begin{pmatrix}
  20 & -15 & 6 & -1 \\
  -15 & 20 & -15 & 6 & -1 \\
  6 & -15 & 20 & -15 & 6 & -1 \\
  -1 & 6 & -15 & 20 & -15 & 6 & -1 \\
  & -1 & 6 & -15 & 20 & -15 & 6 & -1 \\
  & & \ddots & \ddots & \ddots & \ddots & \ddots & \ddots & \ddots &
 \end{pmatrix}
\]
and
\[
 \begin{pmatrix}
  14 & -14 & 6 & -1 \\
  -14 & 20 & -15 & 6 & -1 \\
  6 & -15 & 20 & -15 & 6 & -1 \\
  -1 & 6 & -15 & 20 & -15 & 6 & -1 \\
  & -1 & 6 & -15 & 20 & -15 & 6 & -1 \\
  & & \ddots & \ddots & \ddots & \ddots & \ddots & \ddots & \ddots &
 \end{pmatrix},
\]
respectively. Notice the matrix of $T^{3}$ is not Toeplitz and differs from the matrix of $(-\Delta)^{3}$ by the upper-left $2\times 2$ matrix. In general, matrices of $T^{\ell}$ and $(-\Delta)^{\ell}$ differ by an upper-left $(\ell-1)\times(\ell-1)$ matrix. In fact, the matrix of $(-\Delta)^{\ell}$ is a submatrix  of $T^{\ell}$ after removing the first $\ell-1$ rows and columns.

Both $(-\Delta)^{\ell}$ and $T^{\ell}$ determine nonnegative operators on $\ell^{2}(\N)$ with the same spectrum filling the interval $[0,4^{\ell}]$. It is proven in~\cite{ger-kre-sta_arxiv24} that $T^{\alpha}$ is critical if and only if $\alpha\geq3/2$ meaning that, if $T^{\alpha}\geq\rho\geq0$ with $\alpha\geq3/2$, then $\rho$ must be trivial. Hence nontrivial Hardy-like inequalities exist for $T^{\alpha}$ only if $\alpha\in(0,3/2)$, and some non-trivial weights (although not optimal) were found in~\cite{ger-kre-sta_arxiv24}. Clearly, this contrasts the situation with $(-\Delta)^{\ell}$ considered here since $(-\Delta)^{\ell}$ is subcritical on $\mathcal{H}^{\ell}$ for every $\ell\in\N$ in a complete analogy to the case of the Dirichlet Laplacian on the half-line.

A general positive power of $-\Delta$ could be also considered in the present meaning. The respective operator can be defined as the restriction $(-\Delta)^{\alpha}|_{\ell^{2}(\N)}$, where $(-\Delta)^{\alpha}$ acts on the full-line space $\ell^{2}(\Z)$ and is defined by the usual functional calculus. The operator $(-\Delta)^{\alpha}|_{\ell^{2}(\N)}$ is still Toeplitz but not banded for general $\alpha>0$. Finding optimal weights $\rho=\rho(\alpha)\geq0$ such that $(-\Delta)^{\alpha}|_{\ell^{2}(\N)}\geq\rho(\alpha)$ for non-integral $\alpha>0$ remains an interesting open problem for future research.
Operator $(-\Delta)^{\alpha}$, with $\alpha>0$, considered on the full-line space $\ell^{2}(\Z)$ is known to be critical if and only if $\alpha\geq1/2$, and optimal Hardy-like weights for $(-\Delta)^{\alpha}$ are known explicitly for all $\alpha\in(0,1/2)$, see~\cite{cia-ron_18,kel-nie_23}.

\section{Proofs}\label{sec:proof}

In the course of the proofs worked out below, difference operators $\grad$, $\div$, and $\Delta$ are frequently used. Besides these particular operators, we also define the \emph{forward shift} operator $\SS$ on the space of complex sequences indexed by $\Z$ by equation
\begin{equation}
 \SS u_{n}:=u_{n+1}
\label{eq:def_S}
\end{equation}
for $n\in\Z$. Obviously, $\SS$ is invertible and $\SS^{-1}u_{n}=u_{n-1}$ for all $n\in\Z$. Recalling definition~\eqref{eq:def_grad_div}, we see that $\div=\SS-\I$ and $\grad=\I-\SS^{-1}$, where $\I$ stands for the identity operator. Particularly, we have identities $\div = \SS\grad=\grad\SS$ that will be used several times below. Moreover, it is clear that $\div$ and $\grad$ commute, i.e. $\div\grad u = \grad\div u\equiv\Delta u$ for all complex sequences $u$ and hence also all $u\in H^{\ell}$ for any $\ell$. On the other hand, subspaces $H^{\ell}$ are not preserved under the action of $\SS$ and so neither $\div$ ($\SS$ is a bijection of $H^{\ell}$ onto $H^{\ell-1}$). Due to these facts, although elementary, manipulations with the difference operators below require some caution.

\subsection{Proof of Theorem~\ref{thm:0}}\label{subsec:thm0_proof} 

Suppose $\gg\in H^{1}$, $\gg_{n}>0$ for all $n\geq 1$, and $u\in\mathcal{H}^{1}_{0}$. For any $n\geq2$, we have 
\begin{align*}
\left|\sqrt{\frac{\gg_{n-1}}{\gg_{n}}}u_{n}-\sqrt{\frac{\gg_{n}}{\gg_{n-1}}}u_{n-1}\right|^2&=
\frac{\gg_{n-1}}{\gg_{n}}|u_{n}|^{2}+\frac{\gg_{n}}{\gg_{n-1}}|u_{n-1}|^{2}-2\Re(u_{n}u_{n-1})\\
&=|\grad u_{n}|^{2}-\frac{\grad\gg_{n}}{\gg_{n}}|u_{n}|^{2}+\frac{\grad\gg_{n}}{\gg_{n-1}}|u_{n-1}|^{2}.
\end{align*}
Multiplying both sides by $V_{n}$ and summing over $n$ from $2$ to $\infty$, we obtain
\begin{align*}
\sum_{n=2}^{\infty}& V_n \left|\sqrt{\frac{\gg_{n-1}}{\gg_{n}}}u_{n}-\sqrt{\frac{\gg_{n}}{\gg_{n-1}}}u_{n-1}\right|^2 \\
&\hskip60pt=\sum_{n=2}^{\infty}V_{n}|\grad u_{n}|^{2}-\sum_{n=2}^{\infty}V_{n}\frac{\grad\gg_{n}}{\gg_{n}}|u_{n}|^{2}+\sum_{n=1}^{\infty}V_{n+1}\frac{\grad\gg_{n+1}}{\gg_{n}}|u_{n}|^{2} \\
&\hskip60pt=\sum_{n=2}^{\infty}V_{n}|\grad u_{n}|^{2}+\sum_{n=1}^{\infty}\frac{\div(V\grad\gg)_{n}}{\gg_{n}}|u_{n}|^{2}+V_{1}\frac{\grad\gg_{1}}{\gg_{1}}|u_{1}|^{2}.
\end{align*}
By assumptions, $u_{0}=\gg_{0}=0$. Therefore 
\[
V_{1}\frac{\grad\gg_{1}}{\gg_{1}}|u_{1}|^{2}=V_{1}|\grad u_{1}|^{2}
\]
and we get
\[
\sum_{n=2}^{\infty} V_n \left|\sqrt{\frac{\gg_{n-1}}{\gg_{n}}}u_{n}-\sqrt{\frac{\gg_{n}}{\gg_{n-1}}}u_{n-1}\right|^2=\sum_{n=1}^{\infty}V_{n}|\grad u_{n}|^{2}+\sum_{n=1}^{\infty}\frac{\div(V\grad\gg)_{n}}{\gg_{n}}|u_{n}|^{2}.
\]
A shift of the summation index $n$ by one on the left now yields the claim of Theorem~\ref{thm:0}. 

\qed

\subsection{Proof of Theorem~\ref{thm:1}}\label{subsec:thm1_proof}

The proof proceeds by a two-step induction in~$\ell\in\N$:
\begin{enumerate}[a)]
\item We verify Theorem~\ref{thm:1} for $\ell=1,2$.
\item Assuming Theorem~\ref{thm:1} to hold for all $\ell\leq 2m$, where $m\in\N$, we prove it for $\ell=2m+1$.
\item Assuming Theorem~\ref{thm:1} to hold for all $\ell\leq 2m+1$, we prove it also for $\ell=2m+2$.
\end{enumerate}
The reason to treat even and odd indices $\ell$ separately stems from the fact that, when lowering a half-integer power of the discrete Laplacian, $\grad$ or $\div$ pop up depending on the parity of~$\ell$ because we have 
\[
(-\Delta)^{\ell/2}=\begin{cases}
\grad(-\Delta)^{(\ell-1)/2}& \quad\mbox{ if } \ell \mbox{ is odd,}\\
-\div(-\Delta)^{(\ell-1)/2}& \quad\mbox{ if } \ell \mbox{ is even.}
\end{cases}
\]
Since the resulting differences are subtle, parts (b) and (c) of the proof are analogical and therefore the proof of~(c) is only briefly indicated.

a) For $\ell=1$, Theorem~\ref{thm:1} coincides with the special case of Theorem~\ref{thm:0} with $V\equiv1$. Suppose $\ell=2$ and~\eqref{eq:assum_A1}, i.e. $\gg\in H^{2}$, $\gg_{n}>0$ for all $n\geq2$,  and $\div\gg_{n}>0$ for all $n\geq1$. Clearly, $\div\gg\in H^{1}$ and so we may apply Theorem~\ref{thm:0} with $\gg$ replaced by $\div\gg$, $u$ replaced by $\div u$, and $V\equiv1$, from which it follows that
\[
\sum_{n=1}^{\infty}|\Delta u_{n}|^{2}=\sum_{n=1}^{\infty}|\grad \div u_{n}|^{2}=-\sum_{n=1}^{\infty}\frac{\Delta\div\gg_{n}}{\div\gg_{n}}|\div u_{n}|^{2}+\mathcal{R}_{1}^{(2)}(\gg;u)\\
\]
for all $u\in\mathcal{H}^{2}_{0}$, where $\mathcal{R}_{1}^{(2)}(\gg;u)$ is defined in~\eqref{eq:def_remainder_R}. Bearing in mind that $\div u=\grad\SS u$ (recall~\eqref{eq:def_S}), we apply Theorem~\ref{thm:0} once more, this time with $\gg$ replaced by $\SS\gg$, $u$ replaced by $\SS u$, and $V_{n}:=-\Delta\div\gg_{n}/\div\gg_{n}$ to the first term on the right getting the identity
\[
\sum_{n=1}^{\infty}|\Delta u_{n}|^{2}=-\sum_{n=1}^{\infty}\frac{\div(V\grad \SS\gg)_{n}}{\SS\gg_{n}}|\SS u_{n}|^{2}+\mathcal{R}_{0}^{(2)}(\gg;u)+\mathcal{R}_{1}^{(2)}(\gg;u)
\]
for all $u\in\mathcal{H}^{2}_{0}$, with $\mathcal{R}_{0}^{(2)}(\gg;u)$ given again by the general definition from~\eqref{eq:def_remainder_R}. Taking also into account that 
\[
-\div(V\grad\SS\gg)=\div\left(\frac{\Delta\div\gg}{\div\gg} \div\gg\right)=\SS\Delta^{2}\gg,
\]
we obtain
\[
\sum_{n=1}^{\infty}|\Delta u|^{2}=\sum_{n=2}^{\infty}\frac{\Delta^{2}\gg_{n}}{\gg_{n}}|u_{n}|^{2}+\mathcal{R}_{0}^{(2)}(\gg;u)+\mathcal{R}_{1}^{(2)}(\gg;u)
\]
for all $u\in\mathcal{H}^{2}_{0}$, which is the identity~\eqref{eq:hrb_id_init} for $\ell=2$.

b) Suppose $m\in\N$ and the implication of Theorem~\ref{thm:1} to hold for all $\ell\leq2m$. Let $\gg\in H^{2m+1}$ fulfill the assumption~\eqref{eq:assum_A1} for $\ell=2m+1$ and $u\in\mathcal{H}_{0}^{2m+1}$. We show that~\eqref{eq:hrb_id_init} holds true for $\ell=2m+1$.

We have
\begin{align*}
 \sum_{n=\lceil\frac{2m+1}{2}\rceil}^{\infty}\left|(-\Delta)^{(2m+1)/2}u_{n}\right|^{2}&=
 \sum_{n=m+1}^{\infty}\left|\grad(-\Delta)^{m}u_{n}\right|^{2}=
 \sum_{n=m+1}^{\infty}\left|(-\Delta)^{m}\grad u_{n}\right|^{2}\\
 &=\sum_{n=m}^{\infty}\left|(-\Delta)^{m}\div u_{n}\right|^{2}.
\end{align*}
Since $\div u\in\mathcal{H}_{0}^{2m}$ and $\div\gg$ satisfies assumption~\eqref{eq:assum_A1} for $\ell=2m$, we may apply the induction hypothesis and obtain
\[
\sum_{n=m}^{\infty}\left|(-\Delta)^{m}\div u_{n}\right|^{2}=\sum_{n=2m}^{\infty}\frac{\Delta^{2m}\div\gg_{n}}{\div\gg_{n}}|\div u_{n}|^{2}+\sum_{k=0}^{2m-1}\mathcal{R}_{k}^{(2m)}(\div\gg;\div u).
\]
It follows from formula~\eqref{eq:def_remainder_R} that 
\[
\mathcal{R}_{k}^{(2m)}(\div\gg;\div u)=\mathcal{R}_{k+1}^{(2m+1)}(\gg;u).
\]
Shifting also the index in the first sum, we find that  
\begin{align}
\sum_{n=\lceil\frac{2m+1}{2}\rceil}^{\infty}\left|(-\Delta)^{(2m+1)/2}u_{n}\right|^{2}=\sum_{n=1}^{\infty}\frac{\SS^{2m-1}\Delta^{2m}\div\gg_{n}}{\SS^{2m-1}\div\gg_{n}}|\grad \SS^{2m} u_{n}|^{2}+\sum_{k=1}^{2m}\mathcal{R}_{k}^{(2m+1)}(\gg;u).\nonumber\\
\label{eq:semi-id_inducb_inproof}
\end{align}

Next, we apply Theorem~\ref{thm:0} with $u$ replaced by $\SS^{2m}u\in\mathcal{H}_{0}^{1}$, $\gg$ replaced by $\SS^{2m}\gg\in H^{1}$, and
\[
V_{n}:=\frac{\SS^{2m-1}\Delta^{2m}\div\gg_{n}}{\SS^{2m-1}\div\gg_{n}},
\]
to the first term on the right in~\eqref{eq:semi-id_inducb_inproof}. It results in the equality 
\begin{align*}
\sum_{n=1}^{\infty}\frac{\SS^{2m-1}\Delta^{2m}\div\gg_{n}}{\SS^{2m-1}\div\gg_{n}}|\grad \SS^{2m} u_{n}|^{2}&=-\sum_{n=1}^{\infty}\frac{\div(V\grad\SS^{2m}\gg)_{n}}{\SS^{2m}\gg_{n}}|\SS^{2m}u_{n}|^{2}\\
&+\sum_{n=1}^{\infty}V_{n+1}\left|\sqrt{\frac{\SS^{2m}\gg_{n}}{\SS^{2m}\gg_{n+1}}}\SS^{2m}u_{n+1}-\sqrt{\frac{\SS^{2m}\gg_{n+1}}{\SS^{2m}\gg_{n}}}\SS^{2m}u_{n}\right|^{2}.
\end{align*}
Taking also into account that 
\begin{align*}
-\div(V\grad\SS^{2m}\gg)&=-\div\left(\frac{\SS^{2m-1}\Delta^{2m}\div\gg}{\SS^{2m-1}\div\gg}\SS^{2m-1}\div\gg\right)=-\div\SS^{2m-1}\Delta^{2m}\div\gg\\
&=\SS^{2m}(-\Delta)^{2m+1}\gg,
\end{align*}
we arrive, after shifting indices, at the formula
\begin{align*}
\sum_{n=1}^{\infty}\frac{\SS^{2m-1}\Delta^{2m}\div\gg_{n}}{\SS^{2m-1}\div\gg_{n}}|\grad \SS^{2m} u_{n}|^{2}&=\sum_{n=2m+1}^{\infty}\frac{(-\Delta)^{2m+1}\gg_{n}}{\gg_{n}}|u_{n}|^{2}\\
&+\sum_{n=2m+1}^{\infty}\frac{\Delta^{2m}\div\gg_{n}}{\div\gg_{n}}\left|\sqrt{\frac{\gg_{n}}{\gg_{n+1}}}u_{n+1}-\sqrt{\frac{\gg_{n+1}}{\gg_{n}}}u_{n}\right|^{2}.
\end{align*}
By~\eqref{eq:def_remainder_R}, the last term coincides with $\mathcal{R}_{0}^{(2m+1)}(\gg;u)$. Thus, when combined with~\eqref{eq:semi-id_inducb_inproof}, we obtain identity~\eqref{eq:hrb_id_init} for $\ell=2m+1$.

c) Suppose Theorem~\ref{thm:1} holds for all $\ell\leq2m+1$, $\gg\in H^{2m+2}$ satisfies~\eqref{eq:assum_A1} for $\ell=2m+2$, and $u\in\mathcal{H}_{0}^{2m+2}$. We have
\[
 \sum_{n=\lceil\frac{2m+2}{2}\rceil}^{\infty}\left|(-\Delta)^{(2m+2)/2}u_{n}\right|^{2}
 =\sum_{n=\lceil\frac{2m+1}{2}\rceil}^{\infty}\left|(-\Delta)^{(2m+1)/2}\div u_{n}\right|^{2}.
\]
Since $\div u\in\mathcal{H}_{0}^{2m+1}$ and $\div \gg\in H^{2m+1}$ satisfies~\eqref{eq:assum_A1} for $\ell=2m+1$, we may apply the induction hypothesis with $u$ replaced by $\div u$ and $\gg$ replaced by $\div\gg$ getting
\begin{align*}
\sum_{n=\lceil\frac{2m+1}{2}\rceil}^{\infty}\left|(-\Delta)^{(2m+1)/2}\div u_{n}\right|^{2}&=\sum_{n=2m+1}^{\infty}\frac{(-\Delta)^{2m+1}\div\gg_{n}}{\div\gg_{n}}|\div u_{n}|^{2}\\
&+\sum_{k=0}^{2m}\mathcal{R}_{k}^{(2m+1)}(\div\gg;\div u).
\end{align*}
The rest of the proof proceeds analogically as in part~(b). The proof of Theorem~\ref{thm:1} is complete.

\qed

\subsection{Proof of Theorem~\ref{thm:2}} 

The claim is an immediate consequence of identity~\eqref{eq:hrb_id_init}. It suffices to notice that assumption~\eqref{eq:assum_A2} together with~\eqref{eq:assum_A1} guarantee $\mathcal{R}_{k}^{(\ell)}(\gg;u)\geq0$ for all $k=0,1,\dots,\ell-1$. Assumption~\eqref{eq:assum_A2'} means nothing but the non-negativity of $\rho(\gg)$.
The proof of Theorem~\ref{thm:2} is complete.

\qed

\subsection{Proof of Theorem~\ref{thm:3}}

We check that, under the respective assumptions, nonnegative weight $\rho(\gg):=(-\Delta)^{\ell}\gg/\gg$ possesses the three properties from Definition~\ref{def:optim}: a) criticality; b) optimality near infinity; c) non-attainability.

We will need three auxiliary claims. For more concise formulas, we introduce another averaging difference operator. Recall the definition of the shift~\eqref{eq:def_S} and define $\M:=(\I+\SS)/2$, i.e.
\[
\M u_{n}:=\frac{u_{n}+u_{n+1}}{2}
\]
for any complex sequence $u$ indexed by $\Z$.

\begin{lem}\label{lem:opt1}
Let $k\in\N$, $\alpha\in\R\setminus\N_{0}$, and $g$ be a sequence with the asymptotic expansion
\[
 g_{n}=\sum_{j=0}^{k}a_{j}n^{\alpha-j}+\bigO\left(n^{\alpha-k-1}\right), \quad n\to\infty,
\]
for some $a_{j}\in\R$ and $a_{0}\neq0$. Then the following claims hold true.
\begin{enumerate}[{\upshape i)}]
\item For all $m\in\Z$, there are real coefficients $a_{j}^{(m)}$ such that 
\[
 \SS^{m} g_{n}=a_{0}n^{\alpha}+\sum_{j=1}^{k}a_{j}^{(m)}n^{\alpha-j}+\bigO\left(n^{\alpha-k-1}\right), \quad n\to\infty.
\]
\item For all $m\in\N_{0}$, there are real coefficients $b_{j}^{(m)}$ with $b_{0}^{(m)}\neq0$ such that 
\[
 \M^{m} g_{n}=\sum_{j=0}^{k}b_{j}^{(m)}n^{\alpha-j}+\bigO\left(n^{\alpha-k-1}\right), \quad n\to\infty.
\]
\item For all $m\in\{0,1,\dots,k\}$, there are real coefficients $c_{j}^{(m)}$ with $c_{m}^{(m)}\neq0$ such that 
\[
 \div^{m} g_{n}=\sum_{j=m}^{k}c_{j}^{(m)}n^{\alpha-j}+\bigO\left(n^{\alpha-k-1}\right), \quad n\to\infty.
\]
\end{enumerate}
\end{lem}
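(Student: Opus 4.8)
The plan is to prove Lemma~\ref{lem:opt1} by reducing all three claims to elementary asymptotic expansions of power sequences under the basic difference operators, using the binomial series. The key observation is that $\SS$, $\M$, and $\div$ all act in a controlled way on a single monomial $n^{\beta}$, and that the error term $\bigO(n^{\alpha-k-1})$ is stable under each of them.

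\textbf{Setup.} First I would record the fundamental expansion: for any $\beta\in\R$ and any fixed $m\in\Z$,
\[
 \SS^{m}n^{\beta}=(n+m)^{\beta}=n^{\beta}\Bigl(1+\frac{m}{n}\Bigr)^{\!\beta}=\sum_{i=0}^{k}\binom{\beta}{i}m^{i}n^{\beta-i}+\bigO\!\left(n^{\beta-k-1}\right),\quad n\to\infty,
\]
which is just Taylor's theorem with remainder for $(1+x)^{\beta}$ at $x=0$. I would also note that if $h_{n}=\bigO(n^{\gamma})$ then $\SS^{m}h_{n}=\bigO(n^{\gamma})$ as well (since $(n+m)^{\gamma}\sim n^{\gamma}$), so applying $\SS^{m}$ termwise to the expansion of $g_{n}$ and collecting powers yields claim~(i); the coefficient of the leading power $n^{\alpha}$ stays $a_{0}$ because $\binom{\beta}{0}=1$, and the lower coefficients $a_{j}^{(m)}$ are explicit finite sums $\sum_{i\le j}\binom{\alpha-j+i}{i}m^{i}a_{j-i}$, which are real.

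\textbf{Claims (ii) and (iii) by induction.} For~(ii), since $\M=(\I+\SS)/2$, claim~(i) with $m=0$ and $m=1$ gives $\M g_{n}=\sum_{j=0}^{k}b_{j}^{(1)}n^{\alpha-j}+\bigO(n^{\alpha-k-1})$ with leading coefficient $b_{0}^{(1)}=\tfrac12(a_{0}+a_{0})=a_{0}\neq0$; iterating $m$ times (each step has the same form with exponent $\alpha$ unchanged and nonzero leading coefficient) gives the general case, with $b_{0}^{(m)}=a_{0}\neq0$. For~(iii), I would argue by induction on $m$. The base case $m=0$ is the hypothesis. For the inductive step, $\div=\SS-\I$ annihilates the leading behavior: applying claim~(i) to the expansion $\div^{m}g_{n}=\sum_{j=m}^{k}c_{j}^{(m)}n^{\alpha-j}+\bigO(n^{\alpha-k-1})$, the contributions of $\SS$ and $\I$ to the $n^{\alpha-m}$ term are $\binom{\alpha-m}{0}c_{m}^{(m)}=c_{m}^{(m)}$ for both, which cancel, so the leading surviving power drops to $n^{\alpha-m-1}$; its coefficient is $\binom{\alpha-m}{1}c_{m}^{(m)}=(\alpha-m)c_{m}^{(m)}$, which is nonzero precisely because $\alpha\notin\N_{0}$ (so $\alpha\neq m$) and $c_{m}^{(m)}\neq0$ by the inductive hypothesis. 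This produces $\div^{m+1}g_{n}=\sum_{j=m+1}^{k}c_{j}^{(m+1)}n^{\alpha-j}+\bigO(n^{\alpha-k-1})$ with $c_{m+1}^{(m+1)}=(\alpha-m)c_{m}^{(m)}\neq0$, completing the induction. (In particular $c_{m}^{(m)}=(\alpha)(\alpha-1)\cdots(\alpha-m+1)\,a_{0}$.)

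\textbf{Main obstacle.} There is no serious obstacle here; the statement is a bookkeeping lemma. The only point requiring care is tracking that each difference operator preserves the shape of the expansion out to exactly $k-m$ usable terms with an honest $\bigO(n^{\alpha-k-1})$ tail, rather than losing a power of precision at each step --- this is why the hypothesis provides the expansion to order $k$ but claim~(iii) only asserts it to the same order $k$ (the window $[m,k]$ shrinks, the tail exponent does not). The role of $\alpha\notin\N_{0}$ is isolated to guaranteeing $\alpha-m\neq0$ in the inductive step of~(iii), ensuring the leading coefficient never accidentally vanishes; I would flag this explicitly. All coefficients are manifestly real since they arise from finite $\R$-linear combinations of the $a_{j}$ with binomial-coefficient weights.
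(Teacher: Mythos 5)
Your proof is correct. The paper omits the proof of Lemma~\ref{lem:opt1} entirely, stating it is ``an easy exercise,'' so there is no paper argument to compare against; your route---binomial expansion of $\SS^{m}n^{\beta}$, stability of the $\bigO(n^{\alpha-k-1})$ tail under $\SS$, $\M$, and $\div$, and induction for claim~(iii) where $\div$ kills the leading order and $\alpha\notin\N_{0}$ guarantees the new leading coefficient $(\alpha-m)c_{m}^{(m)}$ is nonzero---is exactly the expected filling of this gap, and the bookkeeping (including the reindexing in claim~(i) and the preservation of the leading coefficient under $\M$) checks out.
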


The proof of Lemma~\ref{lem:opt1} is an easy exercise and is therefore omitted. The next auxiliary claim is a higher-order variant of the Mean Value Theorem.

\begin{lem}\label{lem:opt2}
Let $n\in\Z$, $N\in\N$, and $g$ be a continuous function on $[n,n+N]$ of class $C^{N}$ in $(n,n+N)$. Then there exists $\xi\in(n,n+N)$ such that
\[
 \div^{N}g_{n}=g^{(N)}(\xi),
\] 
where we denoted $g_{n}:=g(n)$.
\end{lem}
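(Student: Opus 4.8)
The quantity $\div^{N}g_{n}$ is nothing but the $N$-th forward finite difference of $g$ with unit step,
\[
 \div^{N}g_{n}=\sum_{j=0}^{N}\binom{N}{j}(-1)^{N-j}g(n+j),
\]
so the assertion is the classical fact that an $N$-th finite difference equals an $N$-th derivative at an intermediate point. The plan is to prove it by induction on $N$, using only the ordinary Mean Value Theorem together with the elementary observation that the forward shift commutes with differentiation; consequently, for any $C^{k}$ function $h$, the sequence $(\div h)^{(k)}$ coincides with $\div\bigl(h^{(k)}\bigr)$, where on the right $\div$ acts on the function $x\mapsto h^{(k)}(x+1)-h^{(k)}(x)$.

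For $N=1$ the claim is just the Mean Value Theorem applied to $g$ on $[n,n+1]$. For the inductive step I would assume the statement with $N-1$ in place of $N$ (for every base point and every admissible function), set $h(x):=\div g_{x}=g(x+1)-g(x)$, and first check that $h$ meets the hypotheses of the lemma with $N-1$ and base point $n$: for $x\in(n,n+N-1)$ both $x$ and $x+1$ lie in $(n,n+N)$, so $h\in C^{N-1}$ there, while continuity of $g$ on $[n,n+N]$ gives continuity of $h$ on $[n,n+N-1]$. Since $\div^{N}g_{n}=\div^{N-1}h_{n}$, the induction hypothesis yields $\eta\in(n,n+N-1)$ with
\[
 \div^{N-1}h_{n}=h^{(N-1)}(\eta)=\div\bigl(g^{(N-1)}\bigr)_{\eta}=g^{(N-1)}(\eta+1)-g^{(N-1)}(\eta),
\]
the middle equality being the shift--derivative commutation. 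Because $\eta\in(n,n+N-1)$, the closed interval $[\eta,\eta+1]$ lies in $(n,n+N)$, where $g^{(N-1)}$ is $C^{1}$; a final Mean Value Theorem for $g^{(N-1)}$ on $[\eta,\eta+1]$ then produces $\xi\in(\eta,\eta+1)\subset(n,n+N)$ with $g^{(N-1)}(\eta+1)-g^{(N-1)}(\eta)=g^{(N)}(\xi)$, completing the induction.

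I do not expect a serious obstacle; the only point requiring care is the regularity bookkeeping near the endpoints, ensuring that each invocation of the Mean Value Theorem and of the induction hypothesis is legitimate. This is handled by the remark above that all the relevant closed subintervals sit inside the open interval $(n,n+N)$ on which $g\in C^{N}$, whereas continuity of $g$ on the outer closed interval $[n,n+N]$ is used only to guarantee continuity of the first difference $h$ up to the endpoints. (Equivalently, one may recognize $\div^{N}g_{n}/N!$ as the $N$-th divided difference of $g$ at the consecutive nodes $n,n+1,\dots,n+N$ and invoke the classical identity expressing it as $g^{(N)}(\xi)/N!$, itself obtained by applying Rolle's theorem $N$ times to the difference between $g$ and its degree-$N$ interpolating polynomial; the inductive argument above is merely an unwinding of this.)
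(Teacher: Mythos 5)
Your proof is correct, and it takes a genuinely different route from the paper's. The paper gives a one-shot argument: it takes the Lagrange interpolating polynomial $p$ of degree at most $N$ through the nodes $n,n+1,\dots,n+N$, observes that $\div^{N}g_{n}=\div^{N}p_{n}=a_{N}N!$ (using $\div^{N}x^{k}=0$ for $k<N$ and $\div^{N}x^{N}=N!$), and then applies Rolle's theorem $N$ times to $g-p$ to produce $\xi$ with $g^{(N)}(\xi)=p^{(N)}(\xi)=a_{N}N!$ --- exactly the argument you sketch in your closing parenthesis. Your induction on $N$ over all admissible functions and base points, reducing $\div^{N}g_{n}$ to $\div^{N-1}h_{n}$ for $h=\div g$ and finishing with a single Mean Value Theorem on $[\eta,\eta+1]$, is more elementary in that it uses only the first-order MVT and no interpolation theory; the price is that the intermediate point is obtained only implicitly through the nested inductive calls, whereas the paper's argument also identifies the common value as $N!$ times the leading coefficient of $p$. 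Your regularity bookkeeping is the one point that genuinely needs care and you handle it correctly: $h$ is continuous on $[n,n+N-1]$ and of class $C^{N-1}$ on $(n,n+N-1)$ because both $x$ and $x+1$ stay in the appropriate sets, and $[\eta,\eta+1]\subset(n,n+N)$ so the final MVT is legitimate. Either proof is acceptable.
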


Lemma~\ref{lem:opt2} is most likely known. In order not to distract the reader from our main purpose, we postpone its proof to the Appendix. The last auxiliary identity is a Leibnitz formula for the discrete divergence. The multiplication of sequences is to be understood point-wise.

\begin{lem}\label{lem:opt3}
For all $m\in\N_{0}$ and sequences $u$ and $v$, we have
\[
 \div^{m}(uv)=\sum_{j=0}^{m}\binom{m}{j}\left(\div^{j}\M^{m-j}u\right)\left(\div^{m-j}\M^{j}v\right).
\]
\end{lem}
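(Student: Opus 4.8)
The plan is to prove Lemma~\ref{lem:opt3} by induction on $m$, treating the discrete divergence exactly as one treats the ordinary derivative in the classical Leibnitz rule, with the subtlety that the discrete product rule mixes in the averaging operator $\M$. For the base case $m=0$ the formula reads $uv=uv$ since $\binom{0}{0}=1$ and $\div^{0}=\M^{0}=\I$. The essential computational input is the first-order product rule
\[
 \div(uv)_{n}=u_{n+1}v_{n+1}-u_{n}v_{n}=\frac{u_{n}+u_{n+1}}{2}(v_{n+1}-v_{n})+(u_{n+1}-u_{n})\frac{v_{n}+v_{n+1}}{2},
\]
i.e. $\div(uv)=(\M u)(\div v)+(\div u)(\M v)$, which one verifies by expanding both sides. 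One also records the elementary commutation facts that $\div$, $\M$, and $\SS$ all commute with one another (they are all polynomials in $\SS$), so in particular $\div^{j}\M^{m-j}$ makes unambiguous sense.

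Next I would carry out the inductive step. Assume the formula holds for some $m\in\N_{0}$; apply $\div$ to both sides and use the first-order product rule termwise on each summand $\bigl(\div^{j}\M^{m-j}u\bigr)\bigl(\div^{m-j}\M^{j}v\bigr)$. This produces
\[
 \binom{m}{j}\left[\bigl(\M\div^{j}\M^{m-j}u\bigr)\bigl(\div\,\div^{m-j}\M^{j}v\bigr)+\bigl(\div\,\div^{j}\M^{m-j}u\bigr)\bigl(\M\div^{m-j}\M^{j}v\bigr)\right],
\]
and after using commutativity this is
\[
 \binom{m}{j}\left[\bigl(\div^{j}\M^{m+1-j}u\bigr)\bigl(\div^{m+1-j}\M^{j}v\bigr)+\bigl(\div^{j+1}\M^{m-j}u\bigr)\bigl(\div^{m-j}\M^{j+1}v\bigr)\right].
\]
Summing over $j$ from $0$ to $m$ and reindexing the second family of terms (replace $j$ by $j-1$), the coefficient of $\bigl(\div^{j}\M^{m+1-j}u\bigr)\bigl(\div^{m+1-j}\M^{j}v\bigr)$ becomes $\binom{m}{j}+\binom{m}{j-1}=\binom{m+1}{j}$ by Pascal's rule, with the boundary terms $j=0$ and $j=m+1$ correctly accounted for since $\binom{m}{-1}=\binom{m}{m+1}=0$. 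This is precisely the claimed identity for $m+1$.

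There is essentially no hard part here: the only thing to be careful about is the bookkeeping of which operator ($\M$ or $\div$) lands on which factor under the first-order rule, and making sure the reindexing of the second sum is done consistently so that Pascal's rule applies. One should also double-check that the total differentiation order is preserved, namely that in every summand the number of $\div$'s plus the number of $\M$'s applied to $u$ equals $m$ (and likewise for $v$) — this is automatic from the structure of the recursion and serves as a useful sanity check. I would present the base case and the product rule in one short paragraph, then the Pascal-rule induction in a second, and conclude.
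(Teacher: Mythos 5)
Your proof is correct and follows essentially the same route as the paper: induction on $m$, with the first-order discrete product rule $\div(uv)=(\M u)(\div v)+(\div u)(\M v)$ providing the inductive engine and Pascal's rule collecting the coefficients. The only cosmetic difference is that you verify the first-order rule by direct expansion whereas the paper averages the two asymmetric identities $\div(uv)=(\SS u)\div v+(\div u)v=u\,\div v+(\div u)\SS v$; both yield the same symmetric form.
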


Proof of Lemma~\ref{lem:opt3} proceeds by induction in $m$, its details are postponed to the Appendix.
Now, we start proving Theorem~\ref{thm:3}.

i) \emph{Proof of the criticality:} Suppose $\tilde{\rho}=\{\tilde{\rho}_{n}\}_{n=\ell}^{\infty}$ is such that inequality~\eqref{eq:hrb_ineq_disc} holds with $\rho$ replaced by $\tilde{\rho}$ and $\tilde{\rho}_{n}\geq\rho_{n}(\gg)$ for all $n\geq\ell$. Using identity~\eqref{eq:hrb_id_init} of Theorem~\ref{thm:1} together with the Hardy--Rellich--Birman inequality for $\tilde{\rho}$, we find that
\begin{equation}
 0\leq\sum_{n=\ell}^{\infty}\left(\tilde{\rho}_{n}-\rho_{n}(\gg)\right)|u_{n}|^{2}\leq\sum_{k=0}^{\ell-1}\mathcal{R}_{k}^{(\ell)}(\gg;u)
\label{eq:tilde_rho_ineq_critic_inproof}
\end{equation}
for all $u\in\mathcal{H}_{0}^{\ell}$. 

Assumptions~\eqref{eq:assum_A1} and~\eqref{eq:assum_A2} imply that each term in the sum from definition~\eqref{eq:def_remainder_R} of remainders $\mathcal{R}_{k}^{(\ell)}(\gg;u)$ is nonnegative for all $0\leq k<\ell$. Then, using the definition of $R_{k}^{(\ell)}(\gg)$ from~\eqref{eq:def_Ru_critic_inproof}, we have
\[
 \mathcal{R}_{k}^{(\ell)}(\gg;u)=\sum_{n=\ell-k}^{\infty}\left|R_{k}^{(\ell)}(\gg)u_{n}\right|^{2}
\]
for all $0\leq k<\ell$. An important fact is that the remainders are simultaneously annihilated if $u=\gg$ since $R_{k}^{(\ell)}(\gg)\gg_{n}=0$ for all $n\geq\ell-k$ and $0\leq k<\ell$.
However, we cannot directly substitute $u=\gg$ into~\eqref{eq:tilde_rho_ineq_critic_inproof} and conclude from here that $\tilde{\rho}=\rho(\gg)$ since $\gg$ need not be compactly supported, i.e. not an element of $\mathcal{H}_{0}^{\ell}$. This is an issue which is to be overcome by a suitable regularization of $\gg$.

Fix arbitrary $\varepsilon\in(0,1/2)$ and a smooth function $\eta$ such that $\eta\equiv0$ on $(-\infty,\varepsilon)$ and $\eta\equiv1$ on $(1-\varepsilon,\infty)$. Then for any $N\geq2$, we put $u^{N}:=\xi^{N}\gg$, where $\xi^{N}_{n}:=\xi^{N}(n)$ is the cut-off sequence defined by
\begin{equation}
 \xi^{N}(x):=\begin{cases}
 1 & \mbox{ if } x\leq N,\\
 \eta\!\left(\frac{2\log N-\log x}{\log N}\right) & \mbox{ if } N<x\leq N^{2},\\
 0 & \mbox{ if } x> N^{2}.\\
 \end{cases}
\label{eq:def_xi_critic_inproof}
\end{equation}
Notice that $\xi^{N}\to1$ and hence $u^{N}\to\gg$ point-wise as $N\to\infty$.
With this choice of $u^{N}\in\mathcal{H}_{0}^{\ell}$, we will show that for all $0\leq k<\ell$,
\begin{equation}
 0\leq \mathcal{R}_{k}^{(\ell)}\left(\gg;u^{N}\right)\lesssim\frac{1}{\log N},
\label{eq:R_bound_critic_inproof}
\end{equation}
where $\lesssim$ means the inequality $\leq$ up to a $N$-independent multiplicative constant. From this, inequality~\eqref{eq:tilde_rho_ineq_critic_inproof}, and Fatou's lemma, we infer that
\[
 0=\liminf_{N\to\infty}\sum_{n=\ell}^{\infty}\left(\tilde{\rho}_{n}-\rho_{n}(\gg)\right)|u_{n}^{N}|^{2}\geq\sum_{n=\ell}^{\infty}\left(\tilde{\rho}_{n}-\rho_{n}(\gg)\right)\lim_{N\to\infty}|u_{n}^{N}|^{2}=\sum_{n=\ell}^{\infty}\left(\tilde{\rho}_{n}-\rho_{n}(\gg)\right)\gg_{n}^{2}.
\] 
Since all terms in the last sum are nonnegative and $\gg_{n}>0$ for all $n\geq\ell$ by~\eqref{eq:assum_A1}, we conclude that $\tilde{\rho}_{n}=\rho_{n}(\gg)$ for all $n\geq\ell$, and the proof of the criticality of $\rho(\gg)$ will be complete.

It remains to verify inequality~\eqref{eq:R_bound_critic_inproof} which is done in the rest of the part (a) of this proof. We substitute for $u=u^{N}=\xi^{N}\gg$ into~\eqref{eq:def_Ru_critic_inproof} and inspect the two factors -- the prefactor term and the term in the brackets -- separately. For the first factor, using the assumption~\eqref{eq:assum_A3} and claims (i) and (iii) of Lemma~\ref{lem:opt1}, we find that
\[
\sqrt{\frac{(-\Delta)^{\ell-1-k}\div^{k+1}\gg_{n}}{\div^{k+1}\gg_{n}}}=\sqrt{\frac{\div^{2\ell-k-1}\SS^{k+1-\ell}\gg_{n}}{\div^{k+1}\gg_{n}}}\lesssim\sqrt{\frac{n^{\ell-s-(2\ell-k-1)}}{n^{\ell-s-(k+1)}}}=n^{k+1-\ell}
\]
for any $n\geq\ell$ and $0\leq k<\ell$, where the unspecified constant is $n$-independent but may depend on $k$ and $\ell$.

Similarly, by Lemma~\ref{lem:opt1} and~\eqref{eq:assum_asympt_g}, we find that
\[
\sqrt{\frac{\div^{k}\gg_{n}}{\div^{k}\gg_{n+1}}}\lesssim1,
\]
and therefore the second factor
\[
\sqrt{\frac{\div^{k}\gg_{n}}{\div^{k}\gg_{n+1}}}\,\left|\,\div^{k}\left(\xi^{N}\gg\right)_{n+1}-\frac{\div^{k}\gg_{n+1}}{\div^{k}\gg_{n}}\div^{k}\left(\xi^{N}\gg\right)_{n}\right|
\]
is majorized by
\[
\left|\,\div^{k}\left(\xi^{N}\gg\right)_{n+1}-\frac{\div^{k}\gg_{n+1}}{\div^{k}\gg_{n}}\div^{k}\left(\xi^{N}\gg\right)_{n}\right|
\]
up to a multiplicative constant independent of $n$ and $N$.
Next, we apply Lemma~\ref{lem:opt3} in the last expression and rewrite it as
\begin{align*}
&\Bigg|\sum_{j=0}^{k}\binom{k}{j}(\div^{j}\M^{k-j}\gg)_{n+1}(\div^{k-j}\M^{j}\xi^{N})_{n+1}\\
&\hskip150pt-\frac{\div^{k}\gg_{n+1}}{\div^{k}\gg_{n}}\sum_{j=0}^{k}\binom{k}{j}(\div^{j}\M^{k-j}\gg)_{n}(\div^{k-j}\M^{j}\xi^{N})_{n}
\Bigg|\\
&\leq\sum_{j=0}^{k}\binom{k}{j}(\div^{j}\M^{k-j}\gg)_{n+1}\left|(\div^{k-j}\M^{j}\xi^{N})_{n+1}
-X_{n}^{(k,j)}(\gg)\,(\div^{k-j}\M^{j}\xi^{N})_{n}
\right|,
\end{align*}
where we denoted
\[
 X_{n}^{(k,j)}(\gg):=\frac{\div^{k}\gg_{n+1}}{\div^{k}\gg_{n}}\frac{\div^{j}\M^{k-j}\gg_{n}}{\div^{j}\M^{k-j}\gg_{n+1}}.
\]
Yet another application of~\eqref{eq:assum_asympt_g} and formulas from Lemma~\ref{lem:opt1} yields estimates
\[
 (\div^{j}\M^{k-j}\gg)_{n+1}\lesssim n^{\ell-s-j}
\quad\mbox{ and }\quad
 X_{n}^{(k,j)}(\gg)=1+p_{n}^{(k,j)},
\]
where $p_{n}^{(k,j)}=\bigO(1/n)$, for $n\to\infty$, i.e. $p_{n}^{(k,j)}\lesssim 1/n$. Altogether, we deduce the upper bound
\begin{equation}
 \left|R_{k}^{(\ell)}(\gg)u^{N}_{n}\right|\lesssim
 n^{k+1-s}\sum_{j=0}^{k}\binom{k}{j}n^{-j}\left(\left|(\div^{k+1-j}\M^{j}\xi^{N})_{n}\right|+\left|p_{n}^{(k,j)}\right|\left|(\div^{k-j}\M^{j}\xi^{N})_{n}\right|\right)
\label{eq:R_semi-bound_critic_inproof}
\end{equation}
for all $N\geq2$, $n\geq\ell$, and $0\leq k<\ell$, where the unspecified multiplicative constant does not depend on $n$ and $N$.

Further, we estimate also the difference expressions from~\eqref{eq:R_semi-bound_critic_inproof} applied to $\xi^{N}$. To this end, first note that for all $x\in[N,N^{2}]$, we have
\[
 (\xi^{N})'(x)=-\eta'\!\left(\frac{2\log N-\log x}{\log N}\right)\frac{1}{x\log N},
\]
from which we readily get the estimate 
\[
|(\xi^{N})'(x)|\lesssim\frac{1}{x\log N}
\]
where the unspecified constant can be taken as $\max_{x\in[0,1]}|\eta'(x)|$. By induction, it is straightforward to generalize this bound to higher-order derivatives
\begin{equation}
|(\xi^{N})^{(m)}(x)|\lesssim\frac{1}{x^{m}\log N},
\label{eq:xi_der_bound_critic_inproof}
\end{equation}
which holds true for any $m\in\N$ and $x>0$. Then using Lemma~\ref{lem:opt2}, we obtain the estimate
\[
 \left|\div^{m}\xi_{n}^{N}\right|\lesssim\frac{1}{n^{m}\log N}
\]
which is true for all $m,n\in\N$ and $N\geq2$. As the right-hand side is a decreasing function of $n$, we also have 
\[
 \left|\div^{m}\xi_{n+j}^{N}\right|\lesssim\frac{1}{n^{m}\log N}
\]
for any $j\in\N_{0}$, from which we infer the needed estimates in~\eqref{eq:R_semi-bound_critic_inproof} getting
\[
 \left|R_{k}^{(\ell)}(\gg)u^{N}_{n}\right|\lesssim
 n^{k+1-s}\sum_{j=0}^{k}\binom{k}{j}n^{-j}\left[\frac{1}{n^{k+1-j}\log N}+\frac{1}{n}\frac{1}{n^{k-j}\log N}\right]\lesssim\frac{1}{n^{s}\log N}\leq\frac{1}{\sqrt{n}\log N}
\]
for all $N\geq2$, $n\geq\ell$, and $0\leq k<\ell$, where we have used the assumption $s\geq1/2$ from~\eqref{eq:assum_A3}. Finally, using the last estimate, we obtain
\[
 \mathcal{R}_{k}^{(\ell)}\left(\gg;u^{N}\right)=\sum_{n=N-k}^{N^{2}}\left|R_{k}^{(\ell)}(\gg)u_{n}^{N}\right|^{2}\lesssim\frac{1}{\log^{2}N}\sum_{n=N-k}^{N^{2}}\frac{1}{n}\lesssim\frac{1}{\log^{2}N}\int_{N}^{N^{2}}\frac{\dd n}{n}=\frac{1}{\log N}.
\]
for all $0\leq k<\ell$ and $N\geq\ell$, arriving at the desired upper bound~\eqref{eq:R_bound_critic_inproof}.

ii) \emph{Proof of the optimality near infinity:}
Fix $M\in\N$. Recalling~\eqref{eq:def_opt_near_inf} together with the identity~\eqref{eq:hrb_id_init}, the optimality of $\rho(\gg)$ near infinity will be verified if we find a sequence of elements $u^{N}\in\mathcal{H}_{0}^{M}\setminus\{0\}$ such that
\begin{equation}
 \lim_{N\to\infty}\frac{\sum\limits_{k=0}^{\ell-1}\mathcal{R}_{k}^{(\ell)}\left(\gg;u^{N}\right)}{\sum\limits_{n=\ell}^{\infty}\rho_{n}(\gg)|u_{n}^{N}|^{2}}=0;
\label{eq:lim_ratio_opt_near_inf_inproof}
\end{equation}
it will follow from~\eqref{eq:denom_bound_opt_near_inf_inproof} below that, with our choice of $u^{N}$, the denominator in~\eqref{eq:lim_ratio_opt_near_inf_inproof} does not vanish for all $N$ sufficiently large.
Such a sequence can be chosen as $u^{N}:=\xi^{N}\gg$, where the regularizing sequence $\xi^{N}$ is a slight modification of~\eqref{eq:def_xi_critic_inproof}. This time, we put
\[
 \xi^{N}(x):=\begin{cases}
 0 & \mbox{ if } x\leq N,\\
 \eta\!\left(\frac{\log x-\log N}{\log N}\right) & \mbox{ if } N<x\leq N^{2},\\
 1 & \mbox{ if } N^{2}<x\leq 2N^{2},\\
  \eta\!\left(\frac{\log(2N^{3})-\log x}{\log N}\right) & \mbox{ if } 2N^{2}<x\leq 2N^{3},\\
 0 & \mbox{ if } x> 2N^{3},\\
 \end{cases}
\]
where the function $\eta$ is the same as in~\eqref{eq:def_xi_critic_inproof}. Then $u^{N}\in\mathcal{H}_{0}^{M}\setminus\{0\}$ for all $N\geq M$. 

With this new $\xi^{N}$, inequality~\eqref{eq:xi_der_bound_critic_inproof} still holds for any $m\in\N$ and $x>0$ and the same estimates as in the proof of part~(a) apply. Consequently, we find that
\begin{align}
 \mathcal{R}_{k}^{(\ell)}\left(\gg;u^{N}\right)&=\sum_{n=N-k}^{N^{2}}\left|R_{k}^{(\ell)}(\gg)u_{n}^{N}\right|^{2}+\sum_{n=2N^{2}-k}^{2N^{3}}\left|R_{k}^{(\ell)}(\gg)u_{n}^{N}\right|^{2} \nonumber\\
 &\lesssim\frac{1}{\log^{2}N}\left(\sum_{n=N-k}^{N^{2}}\frac{1}{n}+\sum_{n=2N^{2}-k}^{2N^{3}}\frac{1}{n}\right)\lesssim\frac{1}{\log N} \label{eq:nomin_bound_opt_near_inf_inproof}
\end{align}
for all $0\leq k<\ell$ and $N\geq\ell$.

Next, we estimate the denominator in~\eqref{eq:lim_ratio_opt_near_inf_inproof} from below. 
Using the assumption~\eqref{eq:assum_A3'} and Lemma~\ref{lem:opt1}, one readily verifies that
\begin{equation}
\rho_{n}(\gg)=\frac{(-\Delta)^{\ell}\gg_{n}}{\gg_{n}}=\frac{\beta}{n^{2\ell}}+\bigO\left(\frac{1}{n^{2\ell+1}}\right), \; \mbox{ as } n\to\infty,
\label{eq:rho_lower_bound_inproof}
\end{equation}
with a constant $\beta\neq0$ (a more precise calculation reminiscent of those made in the proof of Theorem~\ref{thm:5} below yields the exact value $\beta=(1/2)_{\ell}^{2}$). Thus, yet another use of~\eqref{eq:assum_A3'} together with expansion~\eqref{eq:rho_lower_bound_inproof} yields
\begin{equation}
\sum_{n=\ell}^{\infty}\rho_{n}(\gg)|u_{n}^{N}|^{2}\geq\sum_{n=N^{2}}^{2N^{2}}\rho_{n}(\gg)\gg_{n}^{2}\gtrsim\sum_{n=N^{2}}^{2N^{2}}\frac{1}{n}\geq\int_{N^{2}}^{2N^{2}}\frac{\dd n}{n}=\log 2
 \label{eq:denom_bound_opt_near_inf_inproof}
\end{equation}
for all $N$ sufficiently large.
Estimates~\eqref{eq:nomin_bound_opt_near_inf_inproof} and~\eqref{eq:denom_bound_opt_near_inf_inproof} imply~\eqref{eq:lim_ratio_opt_near_inf_inproof}.

iii) \emph{Proof of the non-attainability:} For the proof of attainability, we first show that under certain assumptions, the identity from Theorem~\ref{thm:1} extends from $\mathcal{H}_{0}^{\ell}$ to all sequences of $H^{\ell}$ for which the left-hand side of~\eqref{eq:hrb_id_init} is finite. Although we use this extension only for the proof of the non-attainability, the claim of the following lemma can be of independent interest.
 
\begin{lem}\label{lem:extend}
Under the assumptions~\eqref{eq:assum_A1}, \eqref{eq:assum_A2}, and \eqref{eq:assum_A2'} with the strict inequality, i.e. $(-\Delta)^{\ell}\gg_{n}>0$ for all $n\geq\ell$, the identity~\eqref{eq:hrb_id_init} extends from $\mathcal{H}_{0}^{\ell}$ to all sequences from the space 
\[
\mathcal{D}^{\ell}:=\left\{u\in H^{\ell} \;\big|\; \|(-\Delta)^{\ell/2}u\|<\infty\right\}.
\]
\end{lem}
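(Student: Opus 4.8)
The plan is to prove \eqref{eq:hrb_id_init} on $\mathcal{D}^{\ell}$ by a regularisation argument: approximate $u\in\mathcal{D}^{\ell}$ by $u^{N}:=\xi^{N}u$ with $\xi^{N}$ the logarithmic cut-off \eqref{eq:def_xi_critic_inproof} from the proof of Theorem~\ref{thm:3}, apply the identity of Theorem~\ref{thm:1} to $u^{N}\in\mathcal{H}_{0}^{\ell}$, and let $N\to\infty$. Two preliminary facts drive this. First, since $(-\Delta)^{\ell/2}u=\pm\,\SS^{-\lceil\ell/2\rceil}\div^{\ell}u$, we have $\mathcal{D}^{\ell}=\{u\in H^{\ell}\mid\div^{\ell}u\in\ell^{2}\}$; as each $\div^{m}u$ ($0\le m<\ell$) is an $(\ell-m)$-fold antidifference of $\div^{\ell}u$, Cauchy--Schwarz gives the elementary bound $|\div^{m}u_{n}|\lesssim n^{\ell-m-1/2}\|(-\Delta)^{\ell/2}u\|$, and a cut-off–and–Fatou argument fed by this bound extends Birman's inequality \eqref{eq:birman_ineq_disc} from $\mathcal{H}_{0}^{\ell-m}$ to all of $H^{\ell-m}$, producing the \emph{a priori estimate} $\sum_{n}n^{-2(\ell-m)}|\div^{m}u_{n}|^{2}\lesssim\|(-\Delta)^{\ell/2}u\|^{2}<\infty$ for every $0\le m<\ell$. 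Second, by \eqref{eq:xi_der_bound_critic_inproof} and Lemma~\ref{lem:opt2}, $|\div^{m}\xi^{N}_{n}|\lesssim(n^{m}\log N)^{-1}$ uniformly, and $u^{N}\to u$ pointwise.

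First I would dispatch the left-hand side and the weight term. Expanding $\div^{\ell}(\xi^{N}u)$ by the discrete Leibniz formula (Lemma~\ref{lem:opt3}), the leading term $\M^{\ell}\xi^{N}\cdot\div^{\ell}u$ tends to $\div^{\ell}u$ in $\ell^{2}$ by dominated convergence, while every other term carries a difference of $\xi^{N}$ of order $\ge1$ and hence, by the a priori estimate, has $\ell^{2}$-norm $\lesssim(\log N)^{-1}$; therefore $\|(-\Delta)^{\ell/2}u^{N}\|\to\|(-\Delta)^{\ell/2}u\|$. Substituting $u^{N}$ into \eqref{eq:hrb_id_init}: assumptions \eqref{eq:assum_A1}--\eqref{eq:assum_A2} force all remainders $\mathcal{R}_{k}^{(\ell)}(\gg;u^{N})\ge0$ and \eqref{eq:assum_A2'} forces $\rho(\gg)\ge0$, so $\sum_{n}\rho_{n}(\gg)|u^{N}_{n}|^{2}$ and each $\mathcal{R}_{k}^{(\ell)}(\gg;u^{N})$ are bounded above by $\|(-\Delta)^{\ell/2}u^{N}\|^{2}$, which is bounded by the previous step. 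Fatou's lemma then gives $\sum_{n}\rho_{n}(\gg)|u_{n}|^{2}<\infty$ and $\mathcal{R}_{k}^{(\ell)}(\gg;u)<\infty$, and since $|u^{N}_{n}|\le|u_{n}|$, dominated convergence gives $\sum_{n}\rho_{n}(\gg)|u^{N}_{n}|^{2}\to\sum_{n}\rho_{n}(\gg)|u_{n}|^{2}$.

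The crux is the convergence $\mathcal{R}_{k}^{(\ell)}(\gg;u^{N})\to\mathcal{R}_{k}^{(\ell)}(\gg;u)$ for each $k$. Because $\xi^{N}\equiv1$ on $[\ell,N]$ and $R_{k}^{(\ell)}(\gg)u_{n}$ depends only on $u_{n},\dots,u_{n+k+1}$, we have $R_{k}^{(\ell)}(\gg)u^{N}_{n}=R_{k}^{(\ell)}(\gg)u_{n}$ for $n\le N-k-1$, so $\mathcal{R}_{k}^{(\ell)}(\gg;u^{N})$ differs from its limit $\mathcal{R}_{k}^{(\ell)}(\gg;u)$ only by the transition sum $\sum_{n=N-k}^{N^{2}}|R_{k}^{(\ell)}(\gg)u^{N}_{n}|^{2}$, which has to be driven to $0$. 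I would estimate it in the spirit of part~(a) of the proof of Theorem~\ref{thm:3}: substitute $u^{N}=\xi^{N}u$ into \eqref{eq:def_Ru_critic_inproof}, expand $\div^{k}(\xi^{N}u)$ by Lemma~\ref{lem:opt3}, and split the weighted first difference in the bracket into an averaged difference of $\xi^{N}$ times a sequence factor, a sequence factor times a first difference of $\div^{k}\gg$, and the $\bigO(1/n)$ term coming from $1-\div^{k}\gg_{n+1}/\div^{k}\gg_{n}$. Together with the prefactor bound $\sqrt{(-\Delta)^{\ell-1-k}\div^{k+1}\gg_{n}/\div^{k+1}\gg_{n}}\lesssim n^{k+1-\ell}$ and $|1-\div^{k}\gg_{n+1}/\div^{k}\gg_{n}|\lesssim1/n$ — both furnished by the regularity of $\gg$ used in the proof of Theorem~\ref{thm:3} — every summand of the transition sum is majorised by $n^{-2(\ell-m)}|\div^{m}u_{n}|^{2}$ for a suitable $0\le m\le\ell$, carrying an additional factor $(\log N)^{-2}$ whenever a difference of $\xi^{N}$ of order $\ge1$ survives. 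Summing over $n$ and invoking the a priori estimate of the first paragraph, these contributions are either $\lesssim(\log N)^{-2}$ or a tail over $[N-k,N^{2}]$ of a convergent series; hence the transition sum vanishes as $N\to\infty$.

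Passing to the limit in $\|(-\Delta)^{\ell/2}u^{N}\|^{2}=\sum_{n}\rho_{n}(\gg)|u^{N}_{n}|^{2}+\sum_{k=0}^{\ell-1}\mathcal{R}_{k}^{(\ell)}(\gg;u^{N})$ then yields \eqref{eq:hrb_id_init} for $u$, which completes the proof. The genuine difficulty is the transition estimate: whereas $\div^{\ell}u$ is square summable, the lower-order differences $\div^{m}u$ with $m<\ell$ are only polynomially bounded, and the whole scheme hinges on the weights $n^{k+1-\ell}$ and $1/n$ built into $R_{k}^{(\ell)}(\gg)$ converting that polynomial growth into convergent weighted $\ell^{2}$-sums via the $H^{\ell-m}$-version of Birman's inequality. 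A subsidiary technicality is to extract from \eqref{eq:assum_A1}--\eqref{eq:assum_A2'} just enough regularity of $\gg$ — two-sided polynomial control of the differences of $\gg$ and of their consecutive ratios — to legitimise the prefactor bounds.
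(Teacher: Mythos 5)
Your proof is correct, but it takes a genuinely different route from the paper's. The paper proves the lemma abstractly in three steps: it first shows the range of $(-\Delta)^{\ell/2}|_{\mathcal{H}_0^\ell}$ is dense in $\mathcal{H}^{\lceil\ell/2\rceil}$ (any orthogonal $v$ solves a constant-coefficient difference equation whose solutions are polynomials, hence not in $\ell^2$), then, for a given target $v$, builds an approximating sequence $u^N\in\mathcal{H}_0^\ell$ with $(-\Delta)^{\ell/2}u^N$ Cauchy, observes that the identity forces $\sqrt{\rho(\gg)}u^N$ and each $R_k^{(\ell)}(\gg)u^N$ to be Cauchy as well, identifies all $\ell^2$-limits with pointwise limits, and passes to the limit; a short uniqueness step closes the argument. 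You instead regularize the given $u$ directly by the logarithmic cut-off $\xi^N$ already used in the proof of Theorem~\ref{thm:3}, apply the finite identity to $u^N=\xi^N u$, and control every term explicitly as $N\to\infty$. The two schemes trade different costs: the paper's density-plus-Cauchy argument is short and sidesteps any quantitative estimate on the transition zone $[N,N^2]$, but needs the strict positivity $\rho(\gg)>0$ to recover $u$ from $w=\sqrt{\rho(\gg)}\,u$; your regularization is more laborious (the transition sums must be shown to vanish), but is quantitative (error $O(1/\log N)$) and does not actually use strict positivity of $\rho(\gg)$, so it proves a marginally stronger statement. The load-bearing novelty in your approach is the a priori weighted estimate $\sum_n n^{-2(\ell-m)}|\div^m u_n|^2\lesssim\|\div^\ell u\|^2$ for all $0\le m<\ell$, which has no analogue in the paper's proof; your derivation of it (bootstrapping the pointwise Cauchy--Schwarz bound $|\div^m u_n|\lesssim n^{\ell-m-1/2}$ through a cut-off of the Birman inequality) does work once one checks that the pointwise bound tames the off-diagonal Leibniz terms $n^{-j}(\log N)^{-1}\cdot n^{j-1/2}=n^{-1/2}(\log N)^{-1}$ on $[N,N^2]$; alternatively, one could obtain the same a priori estimate more cheaply by iterating the classical $\ell^2$ Hardy inequality for partial sums, which holds without any compact-support restriction and avoids invoking the Birman constant at all.
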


\begin{proof}[Proof of Lemma~\ref{lem:extend}]

\emph{Step~1:} We show that the range of $(-\Delta)^{\ell/2}|_{\mathcal{H}_{0}^{\ell}}$ is dense in $\mathcal{H}^{\lceil \ell/2\rceil}$. It follows from the definition of $(-\Delta)^{\ell/2}$ that, for $u\in \mathcal{H}^{\ell}_{0}$, $(-\Delta)^{\ell/2}u_{n}=0$ for all $n<\lceil\ell/2\rceil$, i.e. $(-\Delta)^{\ell/2}u\in H^{\lceil\ell/2\rceil}$, and $(-\Delta)^{\ell/2}u\in\ell^{2}(\Z)$ by the boundedness of $(-\Delta)^{\ell/2}$. Suppose that $v\in\mathcal{H}^{\lceil \ell/2\rceil}$ satisfies 
\[
 \langle v,(-\Delta)^{\ell/2}u\rangle =0
\]
for all $u\in\mathcal{H}_{0}^{\ell}$. We will show that it follows $v=0$.

According to the parity of $\ell$, we distinguish two cases. Let $\ell=2m$ for some $m\in\N$. Then
\[
\langle v,(-\Delta)^{\ell/2}u\rangle=\langle v,(-\Delta)^{m}u\rangle=\langle(-\Delta)^{m}v,u\rangle=0
\]
for all $u\in\mathcal{H}_{0}^{2m}$. By taking $u=\delta_{n}$, with $n\geq2m$, we observe that $v$ solves the difference equation
\[
 (-\Delta)^{m}v_{n}=\sum_{j=-m}^{m}\binom{2m}{m+j}(-1)^{j}v_{n+j}=0, \quad \forall n\geq 2m.
\]
It is easy to show that the fundamental system of the above linear difference equation with constant coefficients consists of functions $1,n,\dots,n^{2m-1}$. Therefore we find that 
\[
 v_{n}=\sum_{j=0}^{2m-1}c_{j}n^{j}, \quad \forall n\geq m,
\]
with some $c_{j}\in\C$. Taking also into account that $v\in\ell^{2}(\Z)$, we conclude that $c_{j}=0$ for all $j=0,1,\dots,2m-1$, and so $v=0$.

The case $\ell=2m+1$ for $m\in\N_{0}$ is to be treated similarly. In this case, we have
\[
\langle v,(-\Delta)^{\ell/2}u\rangle=\langle v,\grad(-\Delta)^{m}u\rangle=-\langle(-\Delta)^{m}\div v,u\rangle=0
\]
for all $u\in\mathcal{H}_{0}^{2m+1}$. By taking $u=\delta_{n}$ with $n\geq 2m+1$, we find that $v$ is a solution of the linear difference equation with constant coefficients
\[
 -(-\Delta)^{m}\div v_{n}=\sum_{j=-m}^{m+1}\binom{2m+1}{m+j}(-1)^{j}v_{n+j}=0, \quad \forall n\geq 2m+1,
\]
whose general solution is the linear combination 
\[
 v_{n}=\sum_{j=0}^{2m}c_{j}n^{j}, \quad \forall n\geq m+1.
\]
Since $v\in\ell^{2}(\Z)$ we again conclude that $v=0$.

\emph{Step 2:} Pick arbitrary $v\in\mathcal{H}^{\lceil\ell/2\rceil}$. By Step~1, there exists sequence $u^{N}\in\mathcal{H}_{0}^{\ell}$ such that $(-\Delta)^{\ell/2}u^{N}\to v$, as $N\to\infty$, in the metric of $\ell^{2}(\Z)$. In particular, the sequence $\{(-\Delta)^{\ell/2}u^{N}\}_{N=1}^{\infty}$ is Cauchy in $\ell^{2}(\Z)$. Assumptions \eqref{eq:assum_A1}, \eqref{eq:assum_A2}, and~\eqref{eq:assum_A2'} guarantee non-negativity of $\rho(\gg)$ as well as the reminder terms. Consequently, identity~\eqref{eq:hrb_id_init} implies inequalities
\[
 \|\sqrt{\rho(\gg)}(u^{N}-u^{M})\|\leq\|(-\Delta)^{\ell/2}(u^{N}-u^{M})\|
\]
and
\[
 \|R_{k}^{(\ell)}(\gg)(u^{N}-u^{M})\|\leq\|(-\Delta)^{\ell/2}(u^{N}-u^{M})\|
\]
for all $0\leq k<\ell$ and $M,N\in\N$. Thus, also sequences $\{\sqrt{\rho(\gg)}u^{N}\}_{N=1}^{\infty}$ and $\{R_{k}^{(\ell)}(\gg)u^{N}\}_{N=1}^{\infty}$, for all $0\leq k<\ell$, are Cauchy and so convergent in $\ell^{2}(\Z)$.

Let $w$ be the $\ell^{2}$-limit of $\sqrt{\rho(\gg)}u^{N}$ as $N\to\infty$. Clearly, $w\in\mathcal{H}^{\ell}$ and $\sqrt{\rho(\gg)}u^{N}\to w$ point-wise as $N\to\infty$. By the assumptions, $\rho_{n}(\gg)>0$ for all $n\geq\ell$ and therefore there exists $u\in H^{\ell}$ such that $w=\sqrt{\rho(\gg)}u$ and $u_{n}^{N}\to u_{n}$, as $N\to\infty$, for all $n\in\Z$. Moreover, the limits of the $\ell^{2}$-convergent sequences $\{(-\Delta)^{\ell/2}u^{N}\}_{N=1}^{\infty}$, $\{\sqrt{\rho(\gg)}u^{N}\}_{N=1}^{\infty}$, and $\{R_{k}^{(\ell)}(\gg)u^{N}\}_{N=1}^{\infty}$ have to coincide with their point-wise limits, which are $(-\Delta)^{\ell/2}u$, $\sqrt{\rho(\gg)}u$, and $R_{k}^{(\ell)}(\gg)u$, respectively. In particular, $v=(-\Delta)^{\ell/2} u$. Therefore we may pass to the limit $N\to\infty$ in the identity~\eqref{eq:hrb_id_init} with $u=u^{N}$, i.e. in
\[
 \|(-\Delta)^{\ell/2}u^{N}\|^{2}=\|\sqrt{\rho_{n}(\gg)}u^{N}\|^{2}+\sum_{k=0}^{\ell-1}\|R_{k}^{(\ell)}(\gg)u^{N}\|^{2},
\]
getting the equality 
\begin{equation}
 \|(-\Delta)^{\ell/2}u\|^{2}=\|\sqrt{\rho_{n}(\gg)}u\|^{2}+\sum_{k=0}^{\ell-1}\|R_{k}^{(\ell)}(\gg)u\|^{2}.
\label{eq:hrb_id_norm-form_inproof}
\end{equation}

\emph{Step~3:} Now, pick $\tilde{u}\in\mathcal{D}^{\ell}$ arbitrarily. Then $(-\Delta)^{\ell/2}\tilde{u}\in\mathcal{H}^{\lceil\ell/2\rceil}$ by the definition of $\mathcal{D}^{\ell}$. By Step~2 applied to $v:=(-\Delta)^{\ell/2}\tilde{u}$, we find $u\in H^{\ell}$ such that $v=(-\Delta)^{\ell/2}u$ and the identity~\eqref{eq:hrb_id_norm-form_inproof} holds. Therefore in order to finish the proof, it remains to check that the equality $(-\Delta)^{\ell/2}\tilde{u}=(-\Delta)^{\ell/2}u$ for two vectors $u,\tilde{u}\in H^{\ell}$ implies $u=\tilde{u}$. 

By linearity, it suffices to assume that $(-\Delta)^{\ell/2}w=0$ for $w\in H^{\ell}$ and conclude that $w=0$. This is immediate since $(-\Delta)^{\ell/2}w_{n}=0$ is a linear difference equation of order $\ell$ with non-zero constant coefficients and $w_{n}=0$ for $n<\ell$. Therefore we obtain recursively $w_{n}=0$ also for all $n\geq\ell$. The proof of Lemma~\ref{lem:extend} is complete.
\end{proof}

Now we may prove the non-attainability. Suppose $\hat{u}\in H^{\ell}$ fulfills~\eqref{eq:hrb_ineq_thm2} as equality whose (both) sides are finite. As we assume~\eqref{eq:assum_A1}, \eqref{eq:assum_A2}, and~\eqref{eq:assum_A3''}, Lemma~\ref{lem:extend} applies and therefore $\mathcal{R}_{k}^{(\ell)}(\gg;\hat{u})=0$ for all $0\leq k<\ell$. In particular, for $k=0$, we 
have
\[
\frac{(-\Delta)^{\ell-1}\div\gg_{n}}{\div\gg_{n}}\left|\sqrt{\frac{\gg_{n}}{\gg_{n+1}}}\, \hat{u}_{n+1}-\sqrt{\frac{\gg_{n+1}}{\gg_{n}}}\, \hat{u}_{n}\right|^{2}=0
\]
for all $n\geq\ell$, see~\eqref{eq:def_remainder_R}. If $\ell=1$, the prefactor on the left-hand side equals $1$. If $\ell\geq2$, the prefactor is strictly positive by assumption~\eqref{eq:assum_A3''} and also \eqref{eq:assum_A1}. In any case, $\hat{u}$ is a solution of the first-order difference equation
\[
\sqrt{\frac{\gg_{n}}{\gg_{n+1}}}\, \hat{u}_{n+1}-\sqrt{\frac{\gg_{n+1}}{\gg_{n}}}\, \hat{u}_{n}=0
\]
for all $n\geq\ell$. Solving the equation, we find that $\hat{u}_{n}=c\gg_{n}$ for all $n\geq\ell$, where $c$ is a complex constant. Consequently, using expansion~\eqref{eq:assum_asympt_g}, we obtain
\[
 \hat{u}_{n}=c\alpha_{0} n^{\ell-s}+\bigO\left(n^{\ell-3/2}\right), \; \mbox{ for } n\to\infty.
\]
Further, similarly as in~\eqref{eq:rho_lower_bound_inproof}, we find that
\[
 \rho_{n}(\gg)=\frac{\gamma}{n^{2\ell}}+\bigO\left(\frac{1}{n^{2\ell+1}}\right), \; \mbox{ as } n\to\infty,
\]
with a constant $\gamma\neq0$ (a concrete computation yields $\gamma=(s)_{\ell}(1-s)_{\ell}$, cf.~\eqref{eq:rho_q_asympt} below). Therefore
\[
 \rho_{n}(\gg)|\hat{u}_{n}|^{2}=\frac{|c|^{2}\alpha_{0}^{2}\gamma}{n^{2s}}\left[1+\bigO\left(\frac{1}{n}\right)\right], \;\mbox{ for } n\to\infty.
\]
By our assumptions, $\alpha_{0}\neq0$, $\gamma\neq0$, $s\leq1/2$, and 
\[
 \sum_{n=\ell}^{\infty}\rho_{n}(\gg)|\hat{u}_{n}|^{2}<\infty,
\]
from which we infer that $c=0$, i.e. $\hat{u}=0$. The proof of Theorem~\ref{thm:3} is complete.

\qed

\subsection{Proof of Theorem~\ref{thm:4}}\label{subsec:thm4_proof}

For the parameter sequence $\gg^{(\ell)}$ defined by~\eqref{eq:def_gg_l}, we verify that
\begin{enumerate}[a)]
\item $\div^{k}\gg_{n}^{(\ell)}>0$ for all $n\geq\ell-k$ and $0\leq k<\ell$;
\item $(-\Delta)^{\ell-k}\div^{k}\gg_{n}^{(\ell)}>0$ for all $n\geq\ell-k$ and $0\leq k<\ell$.
\end{enumerate}
Claim~(a) together with the obvious fact $\gg^{(\ell)}\in H^{\ell}$ means that $\gg^{(\ell)}$ fulfills assumption~\eqref{eq:assum_A1}. Claim~(b) together with the definition~\eqref{eq:def_gg_l} of $\gg^{(\ell)}$ implies that $\gg^{(\ell)}$ satisfies also assumptions~\eqref{eq:assum_A2}, \eqref{eq:assum_A2'}, and~\eqref{eq:assum_A3''} with $s=1/2$. Therefore $\rho^{(\ell)}=(-\Delta)^{\ell}\gg^{(\ell)}/\gg^{(\ell)}$ is an optimal strictly positive discrete Hardy--Rellich--Birman weight by Theorems~\ref{thm:2} and~\ref{thm:3}.

a) We verify claim~(a). First we show that $\div^{\ell}\gg^{(\ell)}_{n}>0$ for all $n\geq0$. For $x\geq0$, let us denote
\[
 \gg^{(\ell)}(x):=\sqrt{x}(x-1)\dots(x-\ell+1).
\]
Then $\gg^{(\ell)}_{n}=\gg^{(\ell)}(n)$ for all $n\in\N_{0}$.
By~\cite[Eq.~(26.8.7)]{dlmf}, we have 
\begin{equation}
 \gg^{(\ell)}(x)=\sum_{j=1}^{\ell}s(\ell,j)x^{j-1/2},
\label{eq:g_n_polyn_stirling_1st}
\end{equation}
where $s(\ell,j)$ are the Stirling numbers of the first kind~\eqref{eq:def_stirling_1st}.
Notice that $(-1)^{\ell+j}s(\ell,j)>0$ for all $1\leq j\leq\ell$.
By Lemma~\ref{lem:opt2}, for any $n\in\N_{0}$ there exists $\xi\in(n,n+\ell)$ such that  
\[
 \div^{\ell}\gg_{n}^{(\ell)}=\frac{\dd^{\ell}\gg^{(\ell)}}{\dd x^{\ell}}(\xi).
\]
Since for any $x>0$, we have 
\[
\frac{\dd^{\ell}\gg^{(\ell)}}{\dd x^{\ell}}(x)=\sum_{j=1}^{\ell}s(\ell,j)\frac{\dd^{\ell}}{\dd x^{\ell}}x^{j-1/2}=\sum_{j=1}^{\ell}b_{j}^{(\ell)}x^{j-1/2-\ell},
\]
where
\[
 b_{j}^{(\ell)}:=(-1)^{\ell+j}s(\ell,j)\prod_{i=1}^{\ell}\left|j+\frac{1}{2}-i\right|>0
\]
for all $1\leq j\leq\ell$, we see that $\div^{\ell}\gg_{n}^{(\ell)}>0$ for all $n\geq0$, indeed.

Next, notice that, since $\gg^{(\ell)}\in H^{\ell}$, we have 
\begin{equation}
 \div^{k}\gg^{(\ell)}_{\ell-k}=\gg_{\ell}^{(\ell)}=\sqrt{\ell}(\ell-1)!>0
\label{eq:div_k_pos_inproof}
\end{equation}
for every $0\leq k<\ell$.

By definition of the discrete divergence, $\div^{\ell}\gg^{(\ell)}>0$ on $\N_{0}$ means that the sequence $\div^{\ell-1}\gg^{(\ell)}$ is strictly increasing on $\N_{0}$. Since $\div^{\ell-1}\gg^{(\ell)}_{1}>0$ by~\eqref{eq:div_k_pos_inproof}, we conclude that $\div^{\ell-1}\gg^{(\ell)}_{n}>0$ for all $n\geq1$. Iterating this argument, we verify claim~(a).

b) We verify claim~(b). We make use of Lemma~\ref{lem:opt2} once more. Since
\[
 (-\Delta)^{\ell-k}\div^{k}\gg_{n}^{(\ell)}=(-1)^{\ell-k}\div^{2\ell-k}\gg^{(\ell)}_{n-\ell+k},
\]
Lemma~\ref{lem:opt2} implies that, for any $n\geq\ell-k$ and $0\leq k<\ell$, there is $\xi>0$ such that
\[
(-\Delta)^{\ell-k}\div^{k}\gg_{n}^{(\ell)}=(-1)^{\ell-k}\,\frac{\dd^{2\ell-k}\gg^{(\ell)}}{\dd x^{2\ell-k}}(\xi).
\]
Similarly as in part~(a), we find this time that
\[
(-1)^{\ell-k}\,\frac{\dd^{2\ell-k}\gg^{(\ell)}}{\dd x^{2\ell-k}}(x)=\sum_{j=1}^{\ell}c_{j}^{(\ell)}x^{j-1/2-2\ell+k},
\]
where
\[
 c_{j}^{(\ell)}:=(-1)^{\ell+j}s(\ell,j)\prod_{i=1}^{2\ell-k}\left|j+\frac{1}{2}-i\right|>0
\]
for every $1\leq j \leq\ell$ and $x>0$. Consequently, 
\[
(-1)^{\ell-k}\,\frac{\dd^{2\ell-k}\gg^{(\ell)}}{\dd x^{2\ell-k}}(x)>0
\]
for all $x>0$ and $0\leq k<\ell$, and the claim~(b) follows. The proof of Theorem~\ref{thm:4} is complete.

\qed

\subsection{Proof of Theorem~\ref{thm:5}}\label{subsec:thm5_proof}

First, with the aid of the generalized binomial theorem and~\eqref{eq:def_X_ml}, we find for all $\nu\in\R$ and $n>\ell$ that
\begin{align*}
 (-\Delta)^{\ell}n^{\nu}&=\sum_{j=-\ell}^{\ell}\binom{2\ell}{\ell+j}(-1)^{j}(n+j)^{\nu}=
 n^{\nu}\sum_{j=-\ell}^{\ell}\binom{2\ell}{\ell+j}(-1)^{j}\left(1+\frac{j}{n}\right)^{\nu}\\
 &= n^{\nu}\sum_{j=-\ell}^{\ell}\binom{2\ell}{\ell+j}(-1)^{j}\sum_{m=0}^{\infty}\binom{\nu}{m}\frac{j^{m}}{n^{m}}=n^{\nu}\sum_{m=0}^{\infty}\binom{\nu}{m}\frac{X_{m}^{(\ell)}}{n^{m}}.
\end{align*}
Recalling~\eqref{eq:X_vanish_id}, we arrive at the identity
\begin{equation}
(-\Delta)^{\ell}n^{\nu}=\sum_{m=2\ell}^{\infty}\binom{\nu}{m}\frac{X_{m}^{(\ell)}}{n^{m-\nu}}
\label{eq:Delta_monial_id_inproof}
\end{equation}
for all $\nu\in\R$ and $n>\ell$. If $\nu>0$, the convergence of the series in~\eqref{eq:Delta_monial_id_inproof} can be extended to all $n\geq\ell$ by inspection of the asymptotic behavior of the summand. Namely, one deduces from~\eqref{eq:def_X_ml} and the Stirling formula that 
\[
 X_{2m}^{(\ell)}\sim2(-1)^{\ell}\ell^{2m} \quad\mbox{ and }\quad \binom{\nu}{m}\sim\frac{1}{\Gamma(-\nu)}\frac{(-1)^{m}}{m^{\nu+1}}
\]
as $m\to\infty$. Therefore the non-vanishing even summands of~\eqref{eq:Delta_monial_id_inproof} behave as
\[
\binom{\nu}{2m}\frac{X_{2m}^{(\ell)}}{n^{2m-\nu}}\sim\frac{2^{-\nu}n^{\nu}}{\Gamma(-\nu)}\frac{(-1)^{\ell}}{m^{\nu+1}}\left(\frac{\ell}{n}\right)^{2m}
\]
for $m\to\infty$. Consequently, the expansion~\eqref{eq:Delta_monial_id_inproof} remains convergent also for $n=\ell$, if $\nu>0$.

i) We prove claim~(i).
By using~\eqref{eq:g_n_polyn_stirling_1st} together with~\eqref{eq:Delta_monial_id_inproof}, we find that
\begin{align*}
(-\Delta)^{\ell}\gg_{n}^{(\ell)}&=\sum_{j=1}^{\ell}s(\ell,j)(-\Delta)^{\ell}n^{j-1/2}=\sum_{j=1}^{\ell}s(\ell,j)\sum_{m=2\ell}^{\infty}\binom{j-1/2}{m}\frac{X_{m}^{(\ell)}}{n^{m-j+1/2}}\\
 &=\sum_{m=2\ell}^{\infty}\sum_{j=0}^{\ell-1}\binom{\ell-j-1/2}{m}s(\ell,\ell-j)\frac{X_{m}^{(\ell)}}{n^{m-\ell+j+1/2}}
\end{align*}
for all $n\geq\ell$. It follows that
\begin{align*}
\frac{(-\Delta)^{\ell}\gg_{n}^{(\ell)}}{\gg_{n}^{(\ell)}}&=\frac{n^{\ell-1}}{(n-1)\dots(n-\ell+1)}\sum_{m=2\ell}^{\infty}\sum_{j=0}^{\ell-1}\binom{\ell-j-1/2}{m}s(\ell,\ell-j)\frac{X_{m}^{(\ell)}}{n^{m+j}}\\
&=\frac{n^{\ell-1}}{(n-1)\dots(n-\ell+1)}\sum_{k=2\ell}^{\infty}\left[\,\sum_{m=2\ell}^{k}\binom{\ell+m-k-1/2}{m}s(\ell,\ell+m-k)X_{m}^{(\ell)}\right]\frac{1}{n^{k}},
\end{align*}
from which we extract formula~\eqref{eq:coeff_r_kl} for coefficients $r_{k}^{(\ell)}$.

Next, we compute the first two coefficients $r_{2\ell}^{(\ell)}$ and $r_{2\ell+1}^{(\ell)}$. For $k=2\ell$, formula~\eqref{eq:coeff_r_kl} yields
\[
 r_{2\ell}^{(\ell)}=\binom{\ell-1/2}{2\ell}s(\ell,\ell)X_{2\ell}^{(\ell)}=\left(\frac{1}{2}\right)_{\ell}^{2}
\]
since $s(\ell,\ell)=1$ and $X_{2\ell}^{(\ell)}=(-1)^{\ell}(2\ell)!$. Similarly, putting $k=2\ell+1$ in~\eqref{eq:coeff_r_kl} and taking into account that $X_{2\ell+1}^{(\ell)}=0$, we find that
\[
 r_{2\ell+1}^{(\ell)}=\binom{\ell-3/2}{2\ell}s(\ell,\ell-1)X_{2\ell}^{(\ell)} 
 =\frac{\ell(\ell-1)(2l+1)}{2(2l-1)}\left(\frac{1}{2}\right)_{\ell}^{2},
\]
where we have used that $s(\ell,\ell-1)=-\ell(\ell-1)/2$. The proof of claim~(i) of Theorem~\ref{thm:5} is complete.

ii) We prove claim~(ii). The claim for $\ell=1$ is an immediate consequence of the known explicit formulas
\begin{equation}
 r_{2k+1}^{(1)}=0 \quad\mbox{ and }\quad r_{2k}^{(1)}=\frac{1}{2^{4k-1}(4k-1)}\binom{4k}{2k}, \quad k\geq1.
\label{eq:r_k^1}
\end{equation}

Suppose $\ell\geq2$. Recalling formulas~\eqref{eq:def_stirling_1st} and~\eqref{eq:X_id_pos}, we find by inspection of the sign of each of the three terms in the sum from~\eqref{eq:coeff_r_kl} that
\[
 (-1)^{\ell}X_{m}^{(\ell)}\geq0, \quad (-1)^{m+k}s(\ell,\ell+m-k)\geq0, \quad (-1)^{\ell+k}\binom{\ell+m-k-1/2}{m}\geq0
\]
for all $k\geq m \geq2\ell$. Taking also into account that $X_{m}^{(\ell)}=0$ if $m$ is odd, we see that each summand from the sum for coefficients $r_{k}^{(\ell)}$ in~\eqref{eq:coeff_r_kl} is nonnegative. Consequently, $r_{k}^{(\ell)}\geq0$ for all $k\geq2\ell$. Moreover, we can estimate $r_{k}^{(\ell)}$ from below by the last non-vanishing summand which corresponds to index $m=k$ if $k$ is even and $m=k-1$ if $k$ is odd.

If $k\geq2\ell$ is even, then
\[
r_{k}^{(\ell)}\geq\binom{\ell-1/2}{k}s(\ell,\ell)X_{k}^{(\ell)}=\left|\binom{\ell-1/2}{k}X_{k}^{(\ell)}\right|>0,
\]
by~\eqref{eq:X_id_pos}. If $k\geq2\ell$ is odd, then
\[
r_{k}^{(\ell)}\geq\binom{\ell-3/2}{k-1}s(\ell,\ell-1)X_{k-1}^{(\ell)}=\left|\binom{\ell-3/2}{k-1}\binom{\ell}{2}X_{k-1}^{(\ell)}\right|>0,
\]
by~\eqref{eq:X_id_pos} again. In total, we verify that $r_{k}^{(\ell)}>0$ for all $k\geq2\ell$ and the proof of claim (ii) of Theorem~\ref{thm:5} is complete.

iii) We prove claim~(iii). Let $n\geq\ell\geq2$. The inequality 
\[
\frac{(-\Delta)^{\ell} n^{\ell-1/2}}{n^{\ell-1/2}}>\left(\frac{1}{2}\right)_{\ell}^{2}\frac{1}{n^{2\ell}}
\]
has been already proven in~\cite{ger-kre-sta_23}. Alternatively, we can deduce it by using~\eqref{eq:Delta_monial_id_inproof} with $\nu=\ell-1/2$, noticing that each summand corresponding to $m$ odd is vanishing while each summand corresponding to $m$ even is positive. Then we readily estimate 
\[
\frac{(-\Delta)^{\ell} n^{\ell-1/2}}{n^{\ell-1/2}}>\binom{\ell-1/2}{2\ell}\frac{X_{2\ell}^{(\ell)}}{n^{2\ell}}=\left(\frac{1}{2}\right)_{\ell}^{2}\frac{1}{n^{2\ell}}
\]
by~\eqref{eq:X_id_pos}. 

Next, we verify the inequality 
\[
\rho_{n}^{(\ell)}>\frac{(-\Delta)^{\ell} n^{\ell-1/2}}{n^{\ell-1/2}}
\]
for $n\geq\ell\geq2$. We show that the summands in 
\[
 (-\Delta)^{\ell}\gg_{n}^{(\ell)}=\sum_{j=1}^{\ell}s(\ell,j)(-\Delta)^{\ell}n^{j-1/2}
\]
are all positive. First, recall that $(-1)^{j+\ell}s(j,\ell)>0$ for all $j=1,\dots,\ell$, see~\eqref{eq:def_stirling_1st}. Second, Lemma~\ref{lem:opt2} implies that there exists $\xi\in(n-\ell,n+\ell)$, hence $\xi>0$, such that 
\[
(-\Delta)^{\ell}n^{j-1/2}=(-1)^{\ell}\frac{\dd^{2\ell}}{\dd x^{2\ell}}\bigg|_{x=\xi}\hskip-8pt x^{j-1/2}=(-1)^{j+\ell}\,\xi^{j-2\ell-1/2}\prod_{i=1}^{2\ell}\left|j+\frac{1}{2}-i\right|.
\]
It follows that $(-1)^{j+\ell}\Delta^{\ell}n^{j-1/2}>0$ for all $j=1,\dots,\ell$. Consequently, we may estimate
\[
 (-\Delta)^{\ell}\gg_{n}^{(\ell)}>s(\ell,\ell)(-\Delta)^{\ell}n^{\ell-1/2}=(-\Delta)^{\ell}n^{\ell-1/2},
\]
from which we find that 
\[
\frac{(-\Delta)^{\ell}\gg_{n}^{(\ell)}}{\gg_{n}^{(\ell)}}>\frac{n^{\ell-1}}{(n-1)\dots(n-\ell+1)}\frac{(-\Delta)^{\ell}n^{\ell-1/2}}{n^{\ell-1/2}}>\frac{(-\Delta)^{\ell}n^{\ell-1/2}}{n^{\ell-1/2}}.
\]
The proof of Theorem~\ref{thm:5} is complete.
\qed

\section{More general families of Hardy--Rellich--Birman weights}\label{sec:gen_weights}

The concrete parameter sequence $\gg^{(\ell)}$ defined by~\eqref{eq:def_gg_l} has been used because the corresponding weight $\rho^{(\ell)}$ is optimal and still relatively simple. But this is not the only optimal discrete Hardy--Rellich--Birman weight. In this section, we briefly discuss various generalizations adding one or more parameters and discuss optimality of the resulting generalized weights as an application of abstract Theorems~\ref{thm:1}--\ref{thm:3}.

\subsection{A countable family of Hardy--Rellich--Birman weights}

Recall that the concrete parameter sequence $\gg=\gg^{(\ell)}$ defined by~\eqref{eq:def_gg_l} meets all the assumptions of Theorems~\ref{thm:1}--\ref{thm:3} for all $\ell\in\N$. In fact, any such a family of parameter sequences gives rise to a denumerable number of new discrete Hardy--Rellich--Birman weights that are also optimal. In the next statement, we use the notation $\gg^{[\ell]}$ to designate explicitly a dependence of the parameter sequence on $\ell$ but distinguish from the concrete parameter sequence~\eqref{eq:def_gg_l} by using the square brackets.

\begin{thm}\label{thm:rho_m_weights}
If, for all $\ell\in\N$, $\gg^{[\ell]}$ fulfills assumptions \eqref{eq:assum_A1}, \eqref{eq:assum_A2}, and~\eqref{eq:assum_A2'}, then $\rho^{[\ell,m]}$, defined by
\[
 \rho^{[\ell,m]}_{n}:=\frac{(-\Delta)^{\ell}\div^{m}\gg^{[\ell+m]}_{n}}{\div^{m}\gg^{[\ell+m]}_{n}}
\]
for $n\geq\ell$, is a discrete Hardy--Rellich--Birman weight for all $m\in\N_{0}$, i.e. $(-\Delta)^{\ell}\geq\rho^{[\ell,m]}\geq0$. If, in addition, for all $\ell\in\N$, $\gg^{[\ell]}$ satisfies also~\eqref{eq:assum_A3}, \eqref{eq:assum_A3'}, and \eqref{eq:assum_A3''}, then $\rho^{[\ell,m]}$ is, for all $m\in\N_{0}$, critical, optimal near infinity, and non-attainable, respectively.
\end{thm}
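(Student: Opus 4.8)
The plan is to derive Theorem~\ref{thm:rho_m_weights} from Theorems~\ref{thm:2} and~\ref{thm:3} by the single structural remark that the discrete divergence sends a good parameter sequence at index~$\ell+m$ to a good parameter sequence at index~$\ell$. Fix $\ell\in\N$ and $m\in\N_{0}$, put $L:=\ell+m$, and set $h:=\div^{m}\gg^{[L]}$. Since $\div$ maps $H^{j}$ into $H^{j-1}$ and commutes with itself and with $-\Delta$, one has $h\in H^{\ell}$, $\div^{k}h=\div^{k+m}\gg^{[L]}$, and $(-\Delta)^{\ell}h=(-\Delta)^{\ell}\div^{m}\gg^{[L]}$; in particular $\rho(h):=(-\Delta)^{\ell}h/h=\rho^{[\ell,m]}$. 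It then suffices to check that $h$ fulfils, at index~$\ell$, every assumption that $\gg^{[L]}$ is assumed to fulfil at index~$L$, and to invoke Theorem~\ref{thm:2} for the first part of the statement and Theorem~\ref{thm:3} for the second.

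Most of this verification is pure index bookkeeping. Assumption~\eqref{eq:assum_A1} for $h$ at index~$\ell$, i.e.\ $\div^{k}h_{n}>0$ for $n\geq\ell-k$ and $0\leq k<\ell$, is the instance $k\mapsto k+m$ of~\eqref{eq:assum_A1} for $\gg^{[L]}$, and the inequality $(-\Delta)^{\ell-k}\div^{k}h_{n}\geq0$ for $n\geq\ell+1-k$ and $1\leq k<\ell$ (assumption~\eqref{eq:assum_A2} for $h$) is the instance $j\mapsto j+m$ of~\eqref{eq:assum_A2} for $\gg^{[L]}$. For the asymptotic requirement~\eqref{eq:assum_asympt_g} I would apply claim (iii) of Lemma~\ref{lem:opt1} to $\gg^{[L]}$ with exponent $\alpha=L-s\notin\N_{0}$: it shows that $h_{n}=\div^{m}\gg^{[L]}_{n}$ again admits an expansion of the form~\eqref{eq:assum_asympt_g}, now centred at index~$\ell$, with the \emph{same} $s$ and a nonzero leading coefficient (the leading power descends from $n^{L-s}$ to $n^{\ell-s}$ and the surplus tail terms are absorbed into the $\bigO$-remainder). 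Hence \eqref{eq:assum_A3}, \eqref{eq:assum_A3'}, and the expansion part of~\eqref{eq:assum_A3''} transfer to~$h$ verbatim.

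The only point requiring genuine work is condition~\eqref{eq:assum_A2'} for $h$, namely $(-\Delta)^{\ell}h_{n}\geq0$ for \emph{all} $n\geq\ell$ (together with its strict form, needed to run Lemma~\ref{lem:extend} for the non-attainability part). For $n\geq\ell+1$ this is the instance $j=m$ of~\eqref{eq:assum_A2} for $\gg^{[L]}$ when $m\geq1$, and is~\eqref{eq:assum_A2'} for $\gg^{[\ell]}$ itself when $m=0$; the boundary value $n=\ell$ has to be extracted from the hypotheses by hand. The key is the algebraic identity
\[
(-\Delta)^{\ell}\div^{m}\gg^{[L]}_{\ell}=(-\Delta)^{L}\gg^{[L]}_{L}+\sum_{j=1}^{m}(-\Delta)^{L-j}\div^{j}\gg^{[L]}_{L+1-j},
\]
a special case of an identity valid for every sequence in $H^{L}$, which one proves by induction on~$m$ (using $\div=\SS-\I$ and $(-\Delta)^{a}=(-1)^{a}\SS^{-a}\div^{2a}$). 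On the right-hand side $(-\Delta)^{L}\gg^{[L]}_{L}\geq0$ by~\eqref{eq:assum_A2'} and $(-\Delta)^{L-j}\div^{j}\gg^{[L]}_{L+1-j}\geq0$ by~\eqref{eq:assum_A2} (with $1\leq j\leq m<L$), each evaluated at the first index where it is controlled, so $(-\Delta)^{\ell}h_{\ell}\geq0$; if moreover $\gg^{[L]}$ satisfies~\eqref{eq:assum_A3''}, then $(-\Delta)^{L}\gg^{[L]}_{L}>0$, whence $(-\Delta)^{\ell}h_{\ell}>0$, and the corresponding strict positivity of $(-\Delta)^{\ell}h_{n}$ and of $(-\Delta)^{\ell-1}\div h_{n}$ for $n>\ell$ follows from~\eqref{eq:assum_A2} together with the nonvanishing of the leading asymptotic coefficients found above. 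Granting all this, $h$ satisfies \eqref{eq:assum_A1}--\eqref{eq:assum_A2'} at index~$\ell$, so Theorem~\ref{thm:2} yields $(-\Delta)^{\ell}\geq\rho^{[\ell,m]}\geq0$, and adding~\eqref{eq:assum_A3} (resp.\ \eqref{eq:assum_A3'}, resp.\ \eqref{eq:assum_A3''}) Theorem~\ref{thm:3} yields criticality (resp.\ optimality near infinity, resp.\ non-attainability) of~$\rho^{[\ell,m]}$, for every $m\in\N_{0}$. The main obstacle is the telescoping boundary identity; the strict-positivity transfers at the remaining indices are the secondary technical issue.
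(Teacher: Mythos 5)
Your overall architecture is sound and matches the paper's: everything reduces to showing that $h:=\div^{m}\gg^{[\ell+m]}$ satisfies at index $\ell$ the hypotheses of Theorems~\ref{thm:2} and~\ref{thm:3}, with the only non-trivial points being the boundary index $n=\ell$ in~\eqref{eq:assum_A2'} and the strict inequalities of~\eqref{eq:assum_A3''}. Your telescoping identity is correct: writing $b_{j}:=(-\Delta)^{L-j}\div^{j}\gg^{[L]}$ one has $b_{j}|_{n}=-\grad b_{j+1}|_{n}$, hence $b_{j+1}|_{n-1}=b_{j}|_{n}+b_{j+1}|_{n}$, and iterating from $b_{0}|_{L}$ gives exactly your formula; it is the unrolled form of the paper's one-step induction on $m$. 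The index bookkeeping for \eqref{eq:assum_A1}, \eqref{eq:assum_A2} and the transfer of the expansion~\eqref{eq:assum_asympt_g} via Lemma~\ref{lem:opt1}(iii) are also fine.

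There is, however, a genuine gap in your treatment of the strict inequalities needed for non-attainability. You assert that $(-\Delta)^{\ell}h_{n}>0$ and $(-\Delta)^{\ell-1}\div h_{n}>0$ for $n>\ell$ ``follow from~\eqref{eq:assum_A2} together with the nonvanishing of the leading asymptotic coefficients.'' This does not work: \eqref{eq:assum_A2} only yields $\geq0$, and a nonzero leading asymptotic coefficient only forces strict positivity for all \emph{sufficiently large} $n$; neither excludes a zero at some finite intermediate index. The correct mechanism (the one the paper uses) is monotonicity: since $b_{j}|_{n}=b_{j+1}|_{n-1}-b_{j+1}|_{n}$, strict positivity of $b_{j}$ on $\{n\geq L-j\}$ makes $b_{j+1}$ \emph{strictly decreasing} on $\{n\geq L-j-1\}$; if $b_{j+1}$ vanished at some $n_{0}$ in that range it would be negative for all $n>n_{0}$, contradicting the nonnegativity supplied by~\eqref{eq:assum_A2}. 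Starting from $b_{0}>0$ on $\{n\geq L\}$ and $b_{1}>0$ on $\{n\geq L\}$ (the two strict inequalities of~\eqref{eq:assum_A3''} for $\gg^{[L]}$), this induction propagates strict positivity of $b_{j}$ down to all $n\geq L-j$ for every $j\leq m+1$, which is exactly what is needed for $h$ at $j=m$ and $j=m+1$. With that replacement your proof is complete and essentially coincides with the paper's.
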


\begin{proof}
\emph{Step 1:} Suppose $\gg^{[\ell]}$ satisfies ~\eqref{eq:assum_A1}, \eqref{eq:assum_A2}, and \eqref{eq:assum_A2'} for every $\ell\in\N$.
One readily checks that the assumption~\eqref{eq:assum_A1} for $\gg^{[\ell+1]}$, where $\ell$ is replaced by $\ell+1$, implies that $\div\gg^{[\ell+1]}$ satisfies~\eqref{eq:assum_A1}. By induction, we find that $\div^{m}\gg^{[\ell+m]}$ satisfies~\eqref{eq:assum_A1} for all $m\in\N$.
Analogously, inequalities of assumption~\eqref{eq:assum_A2} for $\gg^{[\ell+1]}$ includes the respective inequalities of~\eqref{eq:assum_A2} for $\div\gg^{[\ell+1]}$, and hence~\eqref{eq:assum_A2} holds for $\div^{m}\gg^{[\ell+m]}$ for all $m\in\N$.

Further, assumption~\eqref{eq:assum_A2} for $\gg^{[\ell+1]}$ with $k=1$ yields inequalities 
\begin{equation}
 (-\Delta)^{\ell}\div\gg^{[\ell+1]}_{n}\geq0, \quad \forall n\geq\ell+1.
\label{eq:a2'_aux_inproof}
\end{equation}
In order to deduce it also for $n=\ell$, and hence to check the assumption \eqref{eq:assum_A2'} is fulfilled for $\div\gg^{[\ell+1]}$, we need to apply \eqref{eq:assum_A2'} to $\gg^{[\ell+1]}$ which can be written as the inequality
\[
 -\grad(-\Delta)^{\ell}\div\gg^{[\ell+1]}_{n}\geq0
\]
for all $n\geq\ell+1$. It follows that, for all $n\geq\ell+1$, we have
\[
 (-\Delta)^{\ell}\div\gg^{[\ell+1]}_{n-1}\geq (-\Delta)^{\ell}\div\gg^{[\ell+1]}_{n},
\]
which together with~\eqref{eq:a2'_aux_inproof} implies the inequality of~\eqref{eq:a2'_aux_inproof} holds also with $n=\ell$. Thus, $\div\gg^{[\ell+1]}$ satisfies~\eqref{eq:assum_A2'} and, by induction, $\div^{m}\gg^{[\ell+m]}$ satisfies~\eqref{eq:assum_A2'} for all $m\in\N$.
In total, we have shown that, for any $m\in\N$, $\div^{m}\gg^{[\ell+m]}$ fulfills~\eqref{eq:assum_A1}, \eqref{eq:assum_A2}, and \eqref{eq:assum_A2'}, therefore  $\rho^{[\ell,m]}$ is a~discrete Hardy--Rellich--Birman weight by Theorem~\ref{thm:2}.

\emph{Step 2:} It is easy to see that, if $\gg^{[\ell+1]}$ admits the expansion~\eqref{eq:assum_asympt_g} with $\ell$ replaced by $\ell+1$, $\alpha_{0}\neq0$, and $s\in(0,1)$, then $\div\gg^{[\ell+1]}$ fulfills~\eqref{eq:assum_asympt_g} with the same $s$ and $\alpha_{0}$ replaced by $(\ell+1-s)\alpha_{0}\neq0$.
Consequently, from assumptions \eqref{eq:assum_A3}, \eqref{eq:assum_A3'} or the asymptotic expansion part of~\eqref{eq:assum_A3''} satisfied by $\gg^{[\ell+1]}$, one deduces the respective conditions to hold for $\div\gg^{[\ell+1]}$. By induction, we extend the claims to $\div^{m}\gg^{[\ell+m]}$ for all $m\in\N$. Consequently, providing $\gg^{[\ell]}$ to fulfill~\eqref{eq:assum_A3} and~\eqref{eq:assum_A3'}, $\rho^{[\ell,m]}$ is critical and optimal near infinity, respectively, by Theorem~\ref{thm:3}.

Suppose finally that $\gg^{[\ell]}$ satisfies also the strict inequalities from~\eqref{eq:assum_A3''} for all $\ell\in\N$. Again, we verify that also $\div\gg^{[\ell+1]}$ fulfills the same inequalities, and hence the assumption~\eqref{eq:assum_A3''}. By using induction and Theorem~\ref{thm:3}, we then conclude that $\rho^{[\ell,m]}$ is non-attainable for all $m\in\N$.

First, when~\eqref{eq:assum_A3''} is imposed on $\gg^{[\ell+1]}$, we get the inequalities 
\begin{equation}
 (-\Delta)^{\ell+1}\gg^{[\ell+1]}_{n}>0 \;\mbox{ and }\; 
 (-\Delta)^{\ell}\div\gg^{[\ell+1]}_{n}>0,
 \quad \forall n\geq \ell+1.
\label{eq:a3'_aux_inproof}
\end{equation}
The first inequality yields
\[
-\grad(-\Delta)^{\ell}\div \gg^{[\ell+1]}_{n}>0,
\]
i.e. $(-\Delta)^{\ell}\div \gg^{[\ell+1]}_{n-1}>(-\Delta)^{\ell}\div \gg^{[\ell+1]}_{n}$ for all $n\geq\ell+1$, which implies the second inequality in~\eqref{eq:a3'_aux_inproof} must hold also for $n=\ell$.

It remains to verify that $(-\Delta)^{\ell-1}\div^{2}\gg^{[\ell+1]}_{n}>0$ for all $n\geq\ell$ assuming $\ell\geq2$ because, for $\ell=1$, the second inequality condition from~\eqref{eq:assum_A3''} is void.
Assumption~\eqref{eq:assum_A2} applied to $\gg^{[\ell+1]}$ with $k=2$ yields 
\begin{equation}
(-\Delta)^{\ell-1}\div^{2}\gg^{[\ell+1]}_{n}\geq0
\label{eq:tow_aux_inproof}
\end{equation}
for all $n\geq\ell$. We want to show that inequalities~\eqref{eq:tow_aux_inproof} are actually all strict. Suppose that there exists $n_{0}\geq\ell$ such that $(-\Delta)^{\ell-1}\div^{2}\gg^{[\ell+1]}_{n_0}=0$. The second inequality of~\eqref{eq:a3'_aux_inproof} tells us that
\[
 (-\Delta)^{\ell-1}\div^{2}\gg^{[\ell+1]}_{n}<(-\Delta)^{\ell-1}\div^{2}\gg^{[\ell+1]}_{n-1}, \quad\forall n\geq\ell+1.
\]
When combined with our assumption, it follows that $(-\Delta)^{\ell-1}\div^{2}\gg^{[\ell+1]}_{n}<0$ for all $n>n_{0}$, contradicting~\eqref{eq:tow_aux_inproof}. The proof of Theorem~\ref{thm:rho_m_weights} is complete.
\end{proof}

\begin{rem}
Theorem~\ref{thm:rho_m_weights} is applicable to the parameter sequence~\eqref{eq:def_gg_l}, therefore the corresponding weights $\rho^{[\ell,m]}$ are optimal strictly positive discrete Hardy--Rellich--Birman weights for all $m\in\N_{0}$ and $\ell\in\N$. Clearly, $\rho^{[\ell,0]}$ coincides with~\eqref{eq:def_rho_l}. In particular, for $\ell=1$, we get a sequence of optimal Hardy weights $\rho^{[1,m]}$, $m\in\N_{0}$.  For $m=0$, $\rho^{[1,0]}$ is the Keller--Pinchover--Pogorselski  weight,
\[
 \rho_{n}^{[1,0]}=2-\frac{\sqrt{n+1}+\sqrt{n-1}}{\sqrt{n}}=\frac{1}{4n^{2}}+\bigO\left(\frac{1}{n^{4}}\right).
\]
For example, if $m=1$, we get a new optimal discrete Hardy weight
\[
 \rho_{n}^{[1,1]}=2-\frac{(n+1)\sqrt{n+2}-n\sqrt{n+1}-(n-1)\sqrt{n}+(n-2)\sqrt{n-1}}{n\sqrt{n+1}-(n-1)\sqrt{n}}
\]
of asymptotically heavier tail than $\rho^{[1,0]}$, for 
\[ 
 \rho^{[1,1]}_{n}=\frac{1}{4n^{2}}+\frac{1}{12n^{3}}+\bigO\left(\frac{1}{n^{4}}\right), \quad n\to\infty.
\]
\end{rem}

\subsection{A $q$-generalization of $\rho^{(\ell)}$} For a parameter $q>0$, we consider 
\begin{equation}
 \gg_{n}^{(\ell)}(q):=n^{q}\prod_{j=1}^{\ell-1}(n-j) 
 \quad\mbox{ and }\quad
 \rho^{(\ell)}(q):=\frac{(-\Delta)^{\ell}\gg^{(\ell)}(q)}{\gg^{(\ell)}(q)}.
\label{eq:def_g_rho_q}
\end{equation}
In the Hardy case $\ell=1$, weight $\rho^{(1)}(q)$ appeared already in~\cite{kre-lap-sta_jlms22}. Clearly, $\gg^{(\ell)}$ defined in~\eqref{eq:def_gg_l} corresponds to $q=1/2$. One can show that, if $q>1$, weight $\rho^{(\ell)}(q)$ is not nonnegative. Moreover, if $q=1$, $\rho^{(\ell)}(1)\equiv0$ for all $\ell\geq1$. Therefore we restrict ourselves to $q\in(0,1)$.

\begin{prop}\label{prop:q-gener}
Let $\ell\in\N$ and $q\in(0,1)$. Then $\rho^{(\ell)}(q)$ defined by~\eqref{eq:def_g_rho_q} is strictly positive discrete Hardy--Rellich--Birman weight. Furthermore, $\rho^{(\ell)}(q)$ is critical if and only if $q\in(0,1/2]$, non-attainable if and only if $q\in[1/2,1)$, and optimal near infinity if and only if $q=1/2$.
\end{prop}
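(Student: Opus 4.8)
The plan is to check that the parameter sequence $\gg^{(\ell)}(q)$ of~\eqref{eq:def_g_rho_q} meets the hypotheses of Theorems~\ref{thm:2} and~\ref{thm:3} in the appropriate range of $q$ (this yields strict positivity and all the ``if'' directions) and then to rule out the remaining values of $q$ by exhibiting explicit obstructions. For the ``if'' part, put $\gg^{(\ell)}(q)(x):=x^{q}\prod_{j=1}^{\ell-1}(x-j)$; as in~\eqref{eq:g_n_polyn_stirling_1st} one has $\gg^{(\ell)}(q)(x)=\sum_{j=1}^{\ell}s(\ell,j)\,x^{q+j-1}$, hence $\gg_{n}^{(\ell)}(q)=\sum_{j=0}^{\ell-1}s(\ell,\ell-j)\,n^{\ell-j-s}$ with $s:=1-q\in(0,1)$ and $\alpha_{0}=s(\ell,\ell)=1\neq0$, so the expansion~\eqref{eq:assum_asympt_g} holds (in fact it terminates). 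The assumptions~\eqref{eq:assum_A1}, \eqref{eq:assum_A2}, \eqref{eq:assum_A2'}, and the strict inequalities in~\eqref{eq:assum_A3''} are verified exactly as in the proof of Theorem~\ref{thm:4}, with $1/2$ replaced by $q$: Lemma~\ref{lem:opt2} turns each $\div^{m}\gg^{(\ell)}(q)$ into $\frac{\dd^{m}}{\dd x^{m}}\gg^{(\ell)}(q)$ evaluated at some $\xi>0$, and since $\frac{\dd^{m}}{\dd x^{m}}x^{q+j-1}=\bigl(\prod_{i=1}^{m}(q+j-i)\bigr)x^{q+j-1-m}$ carries the sign $(-1)^{m-j}$ for $m\geq j$ (the factor $q+j-i$ being negative precisely when $i>j$), the same bookkeeping as in Theorem~\ref{thm:4} gives $\div^{\ell}\gg_{n}^{(\ell)}(q)>0$ for $n\geq0$ and $(-\Delta)^{\ell-k}\div^{k}\gg_{n}^{(\ell)}(q)=(-1)^{\ell-k}\div^{2\ell-k}\gg_{n-\ell+k}^{(\ell)}(q)>0$ for $n\geq\ell-k$, $0\leq k<\ell$, while bootstrapping from $\div^{k}\gg_{\ell-k}^{(\ell)}(q)=\gg_{\ell}^{(\ell)}(q)>0$ yields~\eqref{eq:assum_A1}. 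Thus Theorem~\ref{thm:2} makes $\rho^{(\ell)}(q)$ a strictly positive discrete Hardy--Rellich--Birman weight for every $q\in(0,1)$; and since $s=1-q$, assumption~\eqref{eq:assum_A3} holds iff $q\leq1/2$, \eqref{eq:assum_A3'} iff $q=1/2$, and the asymptotic part of~\eqref{eq:assum_A3''} iff $q\geq1/2$, so Theorem~\ref{thm:3} supplies criticality for $q\in(0,1/2]$, optimality near infinity for $q=1/2$, and non-attainability for $q\in[1/2,1)$.

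For the converse of non-attainability, note that~\eqref{eq:Delta_monial_id_inproof} applied to the leading monomial $n^{\ell-s}$ of $\gg^{(\ell)}(q)$ gives
\begin{equation}
 \rho^{(\ell)}(q)_{n}=\frac{(q)_{\ell}(1-q)_{\ell}}{n^{2\ell}}+\bigO\left(\frac{1}{n^{2\ell+1}}\right),\qquad n\to\infty,
\label{eq:rho_q_asympt}
\end{equation}
with $(q)_{\ell}(1-q)_{\ell}=(s)_{\ell}(1-s)_{\ell}>0$. If $q<1/2$, then $s>1/2$, hence $\rho^{(\ell)}(q)_{n}\bigl(\gg_{n}^{(\ell)}(q)\bigr)^{2}\sim(q)_{\ell}(1-q)_{\ell}\,n^{-2s}$ is summable, and in particular $\|(-\Delta)^{\ell/2}\gg^{(\ell)}(q)\|^{2}=\langle\gg^{(\ell)}(q),(-\Delta)^{\ell}\gg^{(\ell)}(q)\rangle=\sum_{n}\rho^{(\ell)}(q)_{n}\bigl(\gg_{n}^{(\ell)}(q)\bigr)^{2}<\infty$, i.e. $\gg^{(\ell)}(q)\in\mathcal{D}^{\ell}$. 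As \eqref{eq:assum_A1}, \eqref{eq:assum_A2}, \eqref{eq:assum_A2'} hold strictly, Lemma~\ref{lem:extend} applies, and since $R_{k}^{(\ell)}(\gg^{(\ell)}(q))\gg^{(\ell)}(q)=0$ for all $k$, the extended identity~\eqref{eq:hrb_id_init} forces equality in~\eqref{eq:hrb_ineq_thm2} for the nonzero sequence $u=\gg^{(\ell)}(q)$ with $\sqrt{\rho^{(\ell)}(q)}\,u\in\ell^{2}$; this contradicts non-attainability. Hence non-attainability requires $q\geq1/2$.

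For the converse of optimality near infinity, observe that $q\neq1/2$ gives $(q)_{\ell}(1-q)_{\ell}=\prod_{i=0}^{\ell-1}\bigl[(i+\tfrac12)^{2}-(q-\tfrac12)^{2}\bigr]<\prod_{i=0}^{\ell-1}(i+\tfrac12)^{2}=(\tfrac12)_{\ell}^{2}$, because $0<|q-\tfrac12|<\tfrac12$, so all factors are positive and strictly smaller. Together with~\eqref{eq:rho_q_asympt} (for $q$ and for $1/2$) this produces $\delta>0$ and $M\geq\ell$ with $\rho^{(\ell)}(1/2)_{n}\geq(1+\delta)\rho^{(\ell)}(q)_{n}$ for all $n\geq M$. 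Since $\rho^{(\ell)}(1/2)=\rho^{(\ell)}$ is a weight (Theorem~\ref{thm:4}), every $u\in\mathcal{H}_{0}^{M}$ satisfies $\sum_{n}|(-\Delta)^{\ell/2}u_{n}|^{2}\geq\sum_{n\geq M}\rho^{(\ell)}(1/2)_{n}|u_{n}|^{2}\geq(1+\delta)\sum_{n\geq M}\rho^{(\ell)}(q)_{n}|u_{n}|^{2}$, so~\eqref{eq:def_opt_near_inf} fails for this $M$ with $\varepsilon=\delta/2$. Hence optimality near infinity requires $q=1/2$.

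It remains to prove that $\rho^{(\ell)}(q)$ is \emph{not} critical for $q\in(1/2,1)$, and this is the main obstacle. Since $\rho^{(\ell)}(1/2)=\rho^{(\ell)}$ is a weight, it suffices to establish the pointwise bound $\rho^{(\ell)}(q)_{n}<\rho^{(\ell)}(1/2)_{n}$ for \emph{all} $n\geq\ell$, which produces a strictly larger weight; the plan is to prove the stronger statement that, for each fixed $n\geq\ell$, the map $q\mapsto\rho^{(\ell)}(q)_{n}$ is strictly decreasing on $[1/2,1)$. Differentiating $\rho^{(\ell)}(q)_{n}=(-\Delta)^{\ell}\gg_{n}^{(\ell)}(q)/\gg_{n}^{(\ell)}(q)$ and using $\partial_{q}\gg_{m}^{(\ell)}(q)=(\log m)\,\gg_{m}^{(\ell)}(q)$ gives
\[
 \partial_{q}\rho^{(\ell)}(q)_{n}=\frac{1}{\gg_{n}^{(\ell)}(q)}\sum_{i=-\ell}^{\ell}(-1)^{i}\binom{2\ell}{\ell+i}\log\!\left(1+\frac{i}{n}\right)\gg_{n+i}^{(\ell)}(q),
\]
and one wants this to be negative for $q\in[1/2,1)$. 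Exactly as in the ``if'' part, the right-hand side can be rewritten, via Lemma~\ref{lem:opt2} applied to $x\mapsto(\log x)\gg^{(\ell)}(q)(x)$ and to $\gg^{(\ell)}(q)$, in terms of high-order derivatives of $x^{q+j-1}$ and $x^{q+j-1}\log x$, whose signs combine with those of the Stirling numbers $s(\ell,j)$ in the spirit of the proof of Theorem~\ref{thm:5}(iii); the expansion around $n=\infty$ moreover shows $\partial_{q}\rho^{(\ell)}(q)_{n}\to0$ at $q=1/2$, so the decisive negative contribution is of lower order and the bookkeeping must be carried out carefully --- this is the technical heart of the argument (for $\ell=1$ it amounts to the elementary but tight inequality $-\log(1-x)<\sqrt{(1+x)/(1-x)}\,\log(1+x)$ on $(0,1)$). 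An alternative route is to invoke the criticality criterion via minimal-growth positive solutions: for $q>1/2$ the equation $\bigl((-\Delta)^{\ell}-\rho^{(\ell)}(q)\bigr)v=0$ admits, besides $\gg^{(\ell)}(q)\sim n^{\ell-1+q}$, a positive solution of strictly smaller growth $\sim n^{\ell-q}$, incompatible with criticality. Combining this with the three preceding paragraphs gives the Proposition.
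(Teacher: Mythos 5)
Your overall architecture coincides with the paper's: verify \eqref{eq:assum_A1}--\eqref{eq:assum_A2'} for all $q\in(0,1)$ by the same Lemma~\ref{lem:opt2} sign bookkeeping as in Theorem~\ref{thm:4}, read off $s=1-q$ in \eqref{eq:assum_asympt_g} to get the three ``if'' directions from Theorem~\ref{thm:3}, kill non-attainability for $q<1/2$ by substituting $u=\gg^{(\ell)}(q)$ via Lemma~\ref{lem:extend}, and kill optimality near infinity for $q\neq1/2$ by comparing leading constants $(q)_{\ell}(1-q)_{\ell}<(\tfrac12)_{\ell}^{2}$ against the weight $\rho^{(\ell)}(1/2)$. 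All of that is correct and is exactly what the paper does.

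The gap is in the converse of criticality for $q\in(1/2,1)$. You correctly identify that it suffices to prove the pointwise inequality $\rho^{(\ell)}(q)_{n}<\rho^{(\ell)}(1/2)_{n}$ for all $n\geq\ell$, but you then leave the proof of that inequality open: the proposed monotonicity of $q\mapsto\rho^{(\ell)}(q)_{n}$ is reduced to a sign analysis of high-order derivatives of $x^{q+j-1}\log x$ that you explicitly defer (``the bookkeeping must be carried out carefully''), and the ``alternative route'' via positive solutions of minimal growth invokes a criticality criterion that is standard only for second-order operators and is not established anywhere in this framework for $(-\Delta)^{\ell}$ with $\ell\geq2$. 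As it stands, the key step is asserted, not proved. The paper closes this exact gap without any differentiation in $q$: it expands both $(-\Delta)^{\ell}\gg_{n}^{(\ell)}(q)$ and $(-\Delta)^{\ell}\gg_{n}^{(\ell)}(1/2)$ through the convergent series \eqref{eq:Delta_monial_id_inproof}, uses the sign pattern of the Stirling numbers $s(\ell,j)$ and of $(-1)^{\ell}X_{2m}^{(\ell)}>0$ from \eqref{eq:X_id_pos} to reduce the comparison to the single binomial inequality $(-1)^{j}\binom{j-1+q}{2m}<(-1)^{j}\binom{j-1/2}{2m}$ for $1\leq j\leq\ell\leq m$, which in turn follows from the elementary bound $(q+k)(1-q+k)<(k+\tfrac12)^{2}$ for $q\neq1/2$ --- the same inequality you already use in the optimality-near-infinity step. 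Replacing your deferred derivative computation by this term-by-term comparison would complete the proof.
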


\begin{proof}
For $q\in(0,1)$, claims (a) and (b) from the proof of Theorem~\ref{thm:4} in Section~\ref{subsec:thm4_proof} can be verified in an analogous fashion. Consequently, $\gg^{(\ell)}(q)$ meets assumptions~\eqref{eq:assum_A1}, \eqref{eq:assum_A2}, and~\eqref{eq:assum_A2'} and so $\rho^{(\ell)}(q)$ are discrete Hardy--Rellich--Birman weights for all $q\in(0,1)$ by Theorem~\ref{thm:2}.

Let us discuss the optimality of $\rho^{(\ell)}(q)$.\\
a) \emph{Criticality:} Suppose $q\in(0,1/2]$. Then assumption~\eqref{eq:assum_A3} holds for $\gg^{(\ell)}(q)$ with $s=1-q\geq 1/2$. Therefore $\rho^{(\ell)}(q)$ is critical by Theorem~\ref{thm:3}.

On the other hand, $\rho^{(\ell)}(q)$ is not critical for $q\in(1/2,1)$, which is a consequence of the non-trivial inequality
\begin{equation}
\rho_{n}^{(\ell)}(q)<\rho_{n}^{(\ell)}(1/2),
\label{eq:q-non-crit_inproof}
\end{equation}
that holds for all $n\geq\ell$ and $q\in(1/2,1)$.
We verify~\eqref{eq:q-non-crit_inproof}. First, using definition~\eqref{eq:def_g_rho_q} together with~\eqref{eq:g_n_polyn_stirling_1st}, one finds that~\eqref{eq:q-non-crit_inproof} is equivalent to the inequality 
\[
\sum_{j=1}^{\ell}s(\ell,j)(-\Delta)^{\ell}n^{j-1+q}<n^{q-1/2}\sum_{j=1}^{\ell}s(\ell,j)(-\Delta)^{\ell}n^{j-1/2}
\]
for all $n\geq\ell$. Recalling that $(-1)^{j+\ell}s(\ell,j)>0$ for all $1\leq j \leq\ell$ and $\ell\in\N$, see~\eqref{eq:def_stirling_1st}, it is sufficient to show that
\[
(-1)^{\ell+j}(-\Delta)^{\ell}n^{j-1+q}<(-1)^{\ell+j}n^{q-1/2}(-\Delta)^{\ell}n^{j-1/2}
\]
for all $1\leq j \leq\ell\leq n$. With the aid of expansion~\eqref{eq:Delta_monial_id_inproof} and the fact that coefficients $X_{m}^{(\ell)}$ therein vanish for $m$ odd, we may write the last inequality as
\[
 (-1)^{\ell+j}n^{j-1+q}\,\sum_{m=\ell}^{\infty}\binom{j-1+q}{2m}\frac{X_{2m}^{(\ell)}}{n^{2m}}
 < (-1)^{\ell+j}n^{j-1+q}\,\sum_{m=\ell}^{\infty}\binom{j-1/2}{2m}\frac{X_{2m}^{(\ell)}}{n^{2m}}.
\]
Bearing in mind that $(-1)^{\ell}X_{2m}^{(\ell)}>0$ for all $m\geq\ell$, see~\eqref{eq:X_id_pos}, the last inequality is established once we show that
\[
(-1)^{j}\binom{j-1+q}{2m}
 < (-1)^{j}\binom{j-1/2}{2m}
\]
for all $1\leq j \leq\ell\leq m$ and $q\in(1/2,1)$. But this can be verified easily with the aid of the elementary inequality $(q+k)(1-q+k)<(k+1/2)^{2}$, which holds for all $k\in\N_{0}$ and $q\neq1/2$.

b) \emph{Non-attainability:} 
For $q\in[1/2,1)$, we have $s=1-q\leq1/2$ in~\eqref{eq:assum_A3''}. Also, the strict inequalities of~\eqref{eq:assum_A3''} hold as one can verify in an analogous fashion as claim~(b) of Subsection~\ref{subsec:thm4_proof}. Therefore $\rho^{(\ell)}(q)$ is non-attainable for $q\in[1/2,1)$ by Theorem~\ref{thm:3}.

Conversely, suppose $q\in(0,1/2)$. With the aid of~\eqref{eq:Delta_monial_id_inproof} and~\eqref{eq:X_id_pos}, we find that
\begin{equation}
 \rho_{n}^{(\ell)}(q)=\binom{\ell-1+q}{2\ell}\frac{X_{2\ell}^{(\ell)}}{n^{2\ell}}+\bigO\left(\frac{1}{n^{2\ell+1}}\right)=\frac{(q)_{\ell}(1-q)_{\ell}}{n^{2\ell}}+\bigO\left(\frac{1}{n^{2\ell+1}}\right)
\label{eq:rho_q_asympt}
\end{equation}
as $n\to\infty$. Taking also into account that $\gg_{n}^{(\ell)}(q)=n^{\ell-1+q}+\bigO(n^{\ell-2+q})$ for $n\to\infty$, we observe that 
\[
 \sum_{n=\ell}^{\infty}\rho_{n}^{(\ell)}(q)\left|\gg_{n}^{(\ell)}(q)\right|^{2}<\infty,
\]
provided that $q\in(0,1/2)$. Since Lemma~\ref{lem:extend} applies to $\gg^{(\ell)}(q)$, recalling also that $\mathcal{R}_{k}^{(\ell)}(\gg;u)=0$ if $u=\gg$, see~\eqref{eq:def_remainder_R}, we may substitute for $u=\gg^{(\ell)}(q)$ into~\eqref{eq:hrb_id_init} getting the equality
\[
\sum_{n=\lceil\ell/2\rceil}^{\infty}\left|(-\Delta)^{\ell/2}\gg^{(\ell)}_{n}(q)\right|^{2}=\sum_{n=\ell}^{\infty}\rho_{n}^{(\ell)}(q)\left|\gg_{n}^{(\ell)}(q)\right|^{2}.
\]
Thus, $\rho^{(\ell)}(q)$ is attainable for $q\in(0,1/2)$.

c) \emph{Optimality near infinity:}
For $q=1/2$, the optimality of $\rho^{(\ell)}\equiv\rho^{(\ell)}(1/2)$ is asserted in Theorem~\ref{thm:4}. The non-optimality near infinity of $\rho^{(\ell)}(q)$ for $q\neq1/2$ is a consequence of the fact that
the constant by the leading term in~\eqref{eq:rho_q_asympt} satisfies
\[
 (q)_{\ell}(1-q)_{\ell}<\left(\frac{1}{2}\right)_{\ell}^{2}
\]
for $q\neq1/2$. This can be seen from the definition of the Pochhammer symbol and the inequality $(q+k)(1-q+k)<(k+1/2)^{2}$, once again. Consequently, for $q\neq1/2$ fixed, we find $\varepsilon>0$ small, such that 
\[
(1+\varepsilon)\rho_{n}^{(\ell)}(q)\leq\rho_{n}^{(\ell)}
\]
for all $n$ sufficiently large. Then, for all $M\in\N$ sufficiently large and any $u\in\mathcal{H}_{0}^{M}$, we have
\[
\sum_{n=\lceil\ell/2\rceil}^{\infty}\left|(-\Delta)^{\ell/2}u_{n}\right|^{2}\geq\sum_{n=M}^{\infty}\rho_{n}^{(\ell)}|u_{n}|^{2}\geq(1+\varepsilon)\sum_{n=M}^{\infty}\rho_{n}^{(\ell)}(q)|u_{n}|^{2},
\]
contradicting~\eqref{eq:def_opt_near_inf}. The proof of Proposition~\ref{prop:q-gener} is complete.
\end{proof}

\begin{rem}
 Proposition~\ref{prop:q-gener} can be combined with Theorem~\ref{thm:rho_m_weights}. Then for any $m\in\N_{0}$ and $q\in(0,1)$, sequence $\rho^{(\ell,m)}(q)$ defined by
 \[
  \rho^{(\ell,m)}_{n}(q):=\frac{(-\Delta)^{\ell}\div^{m}\gg^{(\ell+m)}_{n}(q)}{\div^{m}\gg^{(\ell+m)}_{n}(q)}
 \]
for $n\geq\ell$, with $\gg^{(\ell)}(q)$ as in~\eqref{eq:def_g_rho_q}, is a discrete Hardy--Rellich--Birman weight. Moreover, $\rho^{(\ell,m)}(q)$ is critical if $q\in(0,1/2]$, non-attainable if $q\in[1/2,1)$, and optimal near infinity if $q=1/2$.
\end{rem}

\subsection{Multi-parameter families of optimal discrete Hardy--Rellich--Birman weights}

For $\ell\geq2$, more optimal weights generalizing~\eqref{eq:def_rho_l} in $(\ell-1)$-parameters can be found. The basic idea for their detection is reminiscent of the one developed in~\cite{kel-pin-pog_18b}, where the authors  relate Hardy weights ($\ell=1$) to positive harmonic functions. For $\ell\geq2$, we seek polyharmonic functions, i.e. solutions of the equation
\[
 (-\Delta)^{\ell}\mathfrak{h}_{n}=0, \quad \forall n\geq\ell,
\]
satisfying the boundary condition $\mathfrak{h}_{0}=\dots=\mathfrak{h}_{\ell-1}=0$, and then take $\gg:=\sqrt{\mathfrak{h}}$, provided that $\mathfrak{h}\geq0$, as a candidate for the parameter sequence. Up to a multiplicative constant, a~general solution $\mathfrak{h}$ of this problem can be expressed as
\begin{equation}
 \mathfrak{h}_{n}=\prod_{j=0}^{\ell-1}(n-j)\prod_{k=1}^{\ell-1}(n-\alpha_{k}),
\label{eq:def_h}
\end{equation}
where $\alpha_{1},\dots,\alpha_{\ell-1}\in\R$ are parameters. Notice that, if $\alpha_{k}=k$, $\sqrt{\mathfrak{h}}$ coincides with the optimal weight $\gg^{(\ell)}$ of Theorem~\ref{thm:4}. For general $\alpha_{1},\dots,\alpha_{\ell-1}$, however, the assumptions \eqref{eq:assum_A1}, \eqref{eq:assum_A2}, and \eqref{eq:assum_A2'} impose additional non-trivial conditions on the parameters and we find it difficult to express these restrictions in terms of the parameters $\alpha_{1},\dots,\alpha_{\ell-1}$ directly. Nevertheless, claims (a) and (b) of Subsection~\ref{subsec:thm4_proof} on $\gg^{(\ell)}$ and perturbation arguments imply that the set of admissible values of $\alpha_{1},\dots,\alpha_{\ell-1}$ contains other solutions than the one corresponding to the particular parameter sequence $\gg^{(\ell)}$. As far as the optimality is concerned, notice that the assumption~\eqref{eq:assum_A3'} holds for $\gg=\sqrt{\mathfrak{h}}$, with $\mathfrak{h}$ given by~\eqref{eq:def_h}. Therefore the resulting weight $\rho(\gg)$ is critical and optimal near infinity. The non-attainability of $\rho(\gg)$ is again a question of the additional restrictions of the parameters $\alpha_{1},\dots,\alpha_{\ell-1}$ guaranteeing the strict inequalities of~\eqref{eq:assum_A3''} to hold.

We illustrate the situation in the still relatively simple case $\ell=2$ when our candidate is
\begin{equation}
 \gg_{n}(\alpha):=\sqrt{n(n-1)(n-\alpha)}.
\label{eq:def_g_alp}
\end{equation}
Assumption~\eqref{eq:assum_A1} requires $\gg_{n+1}(\alpha)>\gg_{n}(\alpha)>0$ for all $n\geq2$. The positivity of $\gg_{n}(\alpha)$ for all $n\geq2$ induces the restriction $\alpha<2$ which is also sufficient for the monotonicity $\gg_{n+1}(\alpha)>\gg_{n}(\alpha)$ for all $n\geq2$. Assumptions \eqref{eq:assum_A2} and~\eqref{eq:assum_A2'} amount to inequalities $0\leq(-\Delta)\div\gg_{n}(\alpha)\leq(-\Delta)\div\gg_{n-1}(\alpha)$ for all $n\geq2$, from which only the second inequality introduces new restrictions on $\alpha$ since
\[
 (-\Delta)\div\gg_{n}(\alpha)=\frac{3}{8n^{3/2}}+\bigO\left(\frac{1}{n^{5/2}}\right), \;\mbox{ as } n\to\infty.
\]
Thus, the final range for $\alpha<2$ is determined by the requirement $\Delta^{2}\gg_{n}(\alpha)\geq0$ for all $n\geq2$. It seems difficult, however, to find a solution analytically. Nevertheless, numerically we get the approximate range $0.847\leq\alpha\leq1.307$ (a suitable CAS such as Wolfram Mathematica is capable of expressing the lower and upper bounds in radicals). With sharp inequalities in the final restriction on $\alpha$, also~\eqref{eq:assum_A3''} holds. Thus, we conclude that for any $\alpha$ approximately within the range 
\[
 0.847<\alpha<1.307,
\]
the weight $\Delta^{2}\gg(\alpha)/\gg(\alpha)$, with $\gg(\alpha)$ given by~\eqref{eq:def_g_alp}, is strictly positive optimal discrete Rellich weight.

\subsection*{Acknowledgment}
F.~{\v S} acknowledges the support of the EXPRO grant No.~20-17749X of the Czech Science Foundation.

\appendix
\section{Proofs of Lemmas~\ref{lem:opt2} and~\ref{lem:opt3}}

\subsection{Proof of Lemma~\ref{lem:opt2}}

Let $p$ be a polynomial of degree less or equal to $N$ such that $p_{n+j}=g_{n+j}$ for all $j=0,1,\dots,N$. Here and below, we use the notation $p_{n}:=p(n)$ and $g_{n}:=g(n)$. Then
\[
 \div^{N}g_{n}=\sum_{j=0}^{N}\binom{N}{j}(-1)^{N-j}g_{n+j}=\sum_{j=0}^{N}\binom{N}{j}(-1)^{N-j}p_{n+j}=\div^{N}p_{n}.
\]

Next, let us write $p(x)=\sum_{k=0}^{N}a_{k}x^{k}$, where $a_{k}\in\R$. Notice that, if the degree of $p$ is less or equal to $N$, then the polynomial $\div p(x):=p(x+1)-p(x)$ is of degree less or equal to $N-1$. Moreover, it is easy to check that $\div^{N}x^{k}=0$ for $k=0,1,\dots N-1$ and $\div^{N}x^{N}=N!$. Consequently, 
\[
\div^{N}p(x)=a_{N}N!
\]
for any $x\in\R$.

Since the function $f:=g-p$ vanishes at all points $n,n+1,\dots,n+N$, for every $j=0,1,\dots,N-1$, there exist $c_{j}\in(n+j,n+j+1)$ such that $f'(c_{j})=0$ by Rolle's theorem. By iteration of the application of Rolle's theorem, we prove the existence of $\xi\in(n,n+N)$ such that $f^{(N)}(\xi)=0$, i.e. $g^{(N)}(\xi)=p^{(N)}(\xi)$. 

In total, we have
\[
 \div^{N} g_{n}=\div^{N} p_{n}=a_{N}N!=p^{(N)}(\xi)=g^{(N)}(\xi).
\]
The proof of Lemma~\ref{lem:opt2} is complete.

\subsection{Proof of Lemma~\ref{lem:opt3}}

The proof proceeds by induction in $m\in\N_{0}$. The statement is obviously true for  $m=0$. For $m=1$, one readily verifies that
\[
 \div(uv)=(\SS u)\div v+(\div u)v=u(\div v)+(\div u)\SS v,
\]
which follows that
\[
\div(uv)=\frac{1}{2}\left((\SS u)\div v+(\div u)v+u(\div v)+(\div u)\SS v\right)=(\div u)\M v+(\M u)\div v.
\]

Next, we assume that the statement holds true for some $m\in\N_{0}$ and deduce the formula for $m+1$. Using the induction hypothesis and the above computation, we obtain
\begin{align*}
&\div^{m+1}(uv)=\div(\div^{m}(uv))=\div\left(\sum_{j=0}^{m}\binom{m}{j}\left(\div^{j}\M^{m-j}u\right)\left(\div^{m-j}\M^{j}v\right)\right)\\
&=\sum_{j=0}^{m}\binom{m}{j}\left[\left(\div^{j+1}\M^{m-j}u\right)\left(\div^{m-j}\M^{j+1}v\right)+\left(\div^{j}\M^{m-j+1}u\right)\left(\div^{m-j+1}\M^{j}v\right)\right]\\
&=\left(\div^{m+1}u\right)\left(\M^{m+1}v\right)+\left(\M^{m+1} u\right)\left(\div^{m+1}v\right)\\
&\hskip104pt+\sum_{j=1}^{m}\left[\binom{m}{j-1}+\binom{m}{j}\right]\left(\div^{j}\M^{m+1-j}u\right)\left(\div^{m+1-j}\M^{j}v\right)\\
&=\sum_{j=0}^{m+1}\binom{m+1}{j}\left(\div^{j}\M^{m+1-j}u\right)\left(\div^{m+1-j}\M^{j}v\right).
\end{align*}
The proof of Lemma~\ref{lem:opt3} is complete.

\bibliographystyle{acm}

\end{document}